\numberwithin{equation}{section}
\newtheorem{remark}{Remark}[section]
\newcommand{\eps}{\varepsilon}
\newcommand{\Ome}{{\Omega}}
\newcommand{\tH}{\widetilde{H}}
\newcommand{\tW}{\widetilde{W}}
\newcommand{\R}{\mathbb{R}}
\newcommand{\N}{\mathbb{N}}
\begin{document}

\title{On New Families of Fractional Sobolev Spaces\thanks{This work was partially supported by the NSF grant DMS-1620168.} }
\markboth{X. FENG and M. SUTTON}{A NEW THEORY OF FRACTIONAL SOBOLEV SPACES}

%
	
\author{Xiaobing Feng\thanks{Department of Mathematics, The University of Tennessee, 
Knoxville, TN 37996. U.S.A. (xfeng@math.utk.edu).}
\and{Mitchell Sutton}\thanks{Department of Mathematics, The University of Tennessee, 
Knoxville, TN 37996. U.S.A. (msutto11@vols.utk.edu).} }

\date{}

\maketitle
 
\thispagestyle{empty}

\begin{abstract}
 This paper presents three new families of fractional Sobolev spaces 
    	and their accompanying theory in one-dimension. The new construction 
    	and theory are based on a newly developed notion of weak fractional derivatives, 
    	which are natural generalizations of the well-established integer order Sobolev 
    	spaces and theory. In particular, two new families of one-sided fractional 
    	Sobolev spaces 
        are introduced and analyzed,  they reveal more insights about another family of   
        so-called symmetric fractional Sobolev spaces. Many key theorems/properties, such as density/approximation theorem, extension theorems, one-sided trace theorem, and 
        various embedding theorems and Sobolev inequalities in those Sobolev spaces are 
        established. Moreover, 
        a few relationships with existing fractional Sobolev spaces are also discovered. 
       The results of this paper lay down a solid theoretical foundation 
       for systematically developing a fractional calculus of 
       variations theory and a fractional PDE theory as well as their numerical 
       solutions in subsequent works. 
       This paper is a concise presentation of the materials of Sections 1, 4 and 5 of reference \cite{Feng_Sutton}. 
\end{abstract}

\begin{keywords}
     Weak fractional derivatives,  fundamental theorem of weak fractional calculus,  one-sided and symmetric fractional Sobolev spaces, density theorem,  extension theorems,  one-sided trace theorem, embedding theorems. 
\end{keywords}

\begin{AMS}
    46E35, 
    34K37, 
    35R11, 
\end{AMS}


 

\section{Introduction}\label{sec-1}
Fractional Sobolev spaces have been known for many years 
	(cf. \cite{Adams,Brezis,Lions}, also see \cite{Nezza}), they are the cornerstone and provide 
	an important functional 
	setting for studying boundary value problems of partial differential equations (PDEs)  \cite{Evans,Trudinger,Lions}. 
    In recent years fractional Sobolev spaces, along with     
    fractional calculus and fractional order differential equations, has garnered a lot 
    of interest and attention both from the PDE community and in the applied mathematics 
    and scientific communities. Besides the genuine mathematical interest and curiosity, 
    this trend has also been driven by intriguing scientific and engineering applications which give rise to fractional order differential equation models to better describe the (time) 
     memory effect and the (space) nonlocal phenomena (cf. \cite{Caffarelli07,Du19, Guo,Hilfer,Kilbas, Meerschaert} and the references therein). It is the rise of these applications that revitalizes the field of fractional calculus and fractional differential equations and calls for further research in the field, including to develop new numerical methods for solving various fractional order problems. 
     
     Historically, the existing fractional order Sobolev spaces were primarily introduced as 
     a functional framework to study boundary value problems of integer order PDEs in general 
     bounded domains \cite{Lions} (also see \cite{Adams,Nezza,Evans}). Although they have been 
     successfully used to analyze certain 
     fractional order differential equations (cf. \cite{Caffarelli07,Du19,Ervin,Du19,Guo,Podlubny} and the references therein), some issues and limitations of using them to study more general 
     fractional order differential equations have been raised and exposed (cf. \cite{Malinowska,Samko}), in particular, when domain-dependent fractional 
     order differential operators are involved. 
     Motivated by such a challenge/need, in a previous work \cite{Feng_Sutton1} (also see \cite{Feng_Sutton}), the authors of 
     this paper introduced a new fractional differential calculus theory, in which the notion of \textit{weak fractional derivatives} was introduced, and its calculus rules, such as product 
     and chain rules, and the \textit{Fundamental Theorem of Weak Fractional Calculus} (FTwFC) 
     were established. Moreover, 
     many basic properties, such as linearity, semigroup property, inclusivity, and consistency
     were proved and several characterizations of weakly fractional differentiable functions 
     were explored; including the all-important characterization by smooth functions. The new 
     weak fractional differential calculus theory serves as a unifying concept in light of 
     the muddled classical fractional calculus with its numerous (none equivalent) definitions 
     and loss of basic calculus rules. It is our aim to use the newly introduced weak 
     fractional derivative notion to develop the required function spaces for studying 
     general fractional order differential equations in a systematic way similar to that their
     integer order counterparts have been done.
 
    The primary goal of this paper is to develop some new families of fractional Sobolev 
    spaces and their accompanying theory in one-dimension. Unlike the existing 
    fractional Sobolev space theories,  our construction 
    and theory are based on the newly developed notion of weak fractional derivatives, 
    that are analogous to the integer order Sobolev spaces and theory. In particular, 
    two new families of one-sided domain-dependent fractional 
    Sobolev spaces are introduced and analyzed, they reveal more insights about another family of   
    so-called symmetric fractional Sobolev spaces. As in the integer order case, 
    the focuses of this study are to establish key theorems/properties in those new fractional 
    Sobolev spaces, such as density/approximation theorem, extension theorems, 
    one-sided trace theorem, and various embedding theorems and Sobolev inequalities. 
    It is expected that the results of this paper lay down a solid theoretical foundation 
    for systematically developing a fractional calculus of 
    variations theory and a fractional PDE theory as well as their numerical 
    solutions in subsequent works. 
    
    The remainder of this paper is organized as follows.  In Section \ref{sec-2} we
    introduce some preliminaries including to recall two widely used definitions of 
    existing fractional Sobolev spaces, and the definitions of weak fractional 
    derivatives and their characterizations. In Section \ref{sec-3} we first introduce our 
    new families of fractional Sobolev spaces using weak fractional derivatives
    in exactly the same spirit as the integer order Sobolev spaces were defined. 
    We then collect a few elementary properties of those spaces. 
    Section \ref{sec-4} is devoted to the establishment of a fractional Sobolev space theory 
    that is analogous to the theory found in the integer order case, which consists of proving 
    a density/approximation theorem, extension theorems, a one-sided trace theorem, 
    various embedding theorems and Sobolev inequalities.
    Moreover, a few connections between the new fractional Sobolev spaces and existing fractional Sobolev spaces are also established.
    Finally, the paper is concluded by a short summary and a few concluding remarks 
    given in Section \ref{sec-5}.
 
\section{Preliminaries}\label{sec-2}
Let $\R:=(-\infty, \infty)$.  Throughout this paper $\Omega$ denotes either a finite interval $(a,b)\subsetneq\R$ or the whole real line $\R$.  $\Gamma: \R\to \R$ denotes the 
standard Gamma function and $\N$ stands for the set of all positive integers. In addition, 
$C$ will be used to denote a generic positive constant which may be different at different 
locations and $f^{(n)}$ denotes the $n$th order classical derivative of $f$ for $n\in\N$. 
Unless stated otherwise, all integrals $\int_a^b \varphi(x)\, dx$ are understood as Lebesgue  integrals. $L^p(\Ome)$ for $1\leq p\leq \infty$ denotes the standard $L^p$ space. $(\cdot,\cdot)$
denotes the standard $L^2$-inner product. 
Also throughout this paper we shall use the convention $\hat{u}: = \mathcal{F}[u]$ to denote 
the Fourier transform of a given function $u$ on $\R$. 

Moreover,  ${^{-}}{D}{^{\alpha}}$ and ${^{+}}{D}{^{\alpha}}$ denote respectively any left 
and right $\alpha (>0)$ order classical fractional derivatives equivalent to the Riemann-Liouville fractional derivative on the space $C^{\infty}_{0}(
\R)$; this includes Caputo, 
Fourier, and Gr\"{u}nwald-Letnikov fractional derivatives (cf. \cite{Samko}, also see \cite[Section 2]{Feng_Sutton}). ${^{\pm}}{D}{^{\alpha}}$ denotes either ${^{-}}{D}{^{\alpha}}$ or ${^{+}}{D}{^{\alpha}}$. In the case $\Ome=(a,b)$, for any $\varphi \in C^{\infty}_{0}(\Omega)$, $\tilde{\varphi}$ is used to denote the zero extension of $\varphi$ to $\R$.
 
\subsection{Two Existing Definitions of Fractional Sobolev Spaces}\label{sec-5.1}
 Three major definitions of fractional order Sobolev spaces have been known in the literature.
 Below we will only recall two relevant definitions. For the 
 third definition, we refer the reader to \cite{Adams,Lions} for details. 

\begin{definition}
    Let $\Omega \subseteq \R$,   $s>0$, and   $1 \leq p \leq  \infty$. Set $m:=[s]$ and $\sigma:=s-m$. 
    Define the fractional Sobolev space ${\tW}^{s,p}(\Omega)$ by
    \[
    {\tW}^{s,p}(\Omega) : = \left\{ u \in W^{m,p}(\Omega) ; \dfrac{\left|\mathcal{D}^m u(x) - \mathcal{D}^m u(y) \right|}{|x- y|^{\frac{1}{p} + \sigma}} \in L^{p}(\Omega \times \Omega)\right\}, 
    \]
    which is endowed with the norm 
    \begin{align*}
    \|u\|_{{\tW}^{s,p}(\Omega)} : =\begin{cases} \displaystyle{ 
    \Bigl( \|u\|_{W^{m,p}(\Omega)}^{p} + [\mathcal{D}^m u]_{{\tW}^{\sigma,p}(\Omega) }  \Bigr)^{\frac{1}{p}} } &\qquad\mbox{if } 1\leq p< \infty, \\
    \displaystyle{  \|u\|_{W^{m,\infty}(\Omega)}  + [\mathcal{D}^m u]_{{\tW}^{\sigma,\infty}(\Omega) } }    &\qquad\mbox{if } p= \infty,
     \end{cases}
    \end{align*}
    where
    \begin{equation*}
    [u]_{{\tW}^{\sigma,p}(\Omega) }: =  \begin{cases} \displaystyle{
    \Bigl( \int_{\Omega} \int_{\Omega} \dfrac{|u(x) - u(y)|^{p}}{|x-y|^{1 + \sigma p}} \Bigr)^{\frac{1}{p}}\, dx dy }
    &\qquad\mbox{if } 1\leq p< \infty,\\
    \displaystyle{ \sup_{(x,y)\in \Omega\times \Omega} \dfrac{|u(x) - u(y)|}{|x-y|^{\sigma } } }
    &\qquad\mbox{if } p=\infty.  
    \end{cases} 
    \end{equation*}
    When $p=2$, we set ${\tH}^{s}(\Omega) := {\tW}^{s,2}(\Omega)$. 
\end{definition}

When $\Ome=\R$, the following definition based on the Fourier transform is popular.

\begin{definition}
    Let   $s>0$ and  $1 \leq p \leq  \infty$. 
     Define the fractional Sobolev space  $\widehat{W}^{s,p}(\R)$ by
    \[
    \widehat{W}^{s,p}(\R) : = \left\{ u \in L^{p}(\R) :  [u]_{\widehat{W}^{s,p}(\R)} < \infty \right\},  \qquad 1\leq p\leq  \infty,
    \]
   where 
   \[
   [u]_{\widehat{W}^{s,p}(\R)} : = \int_{\R} (1+ |\xi|^{s p}) |\hat{u} (\xi)|^{p}\,d\xi ,
   \qquad 1\leq p\leq \infty.
   \]
   When $p=2$, we set $\widehat{H}^{s}(\R) := \widehat{W}^{s,2}(\R)$.
\end{definition}

\begin{remark}
(a) It is well known (cf. \cite{Adams}, \cite{Nezza}) that  ${\tW}^{s,p}(\Omega)$ and $\widehat{W}^{s,p}(\R)$ are Banach spaces,
	and ${\tH}^{s}(\Omega)$ and  $\widehat{H}^{s}(\R)$ are Hilbert spaces.  
	
(b) It is also well known (cf. \cite{Adams},\cite{Nezza}) that ${{\tH}^{s}(\R)}$ and $\widehat{H}^{s}(\R)$ are equivalent spaces. In particular,
\begin{align}\label{SeminormRelation}
    [u]_{{\tH}^{s}(\R)} \cong \int_{\R} |\xi|^{2 s} |\hat{ u}(\xi)|^{2}\,d\xi.
\end{align}
However, ${{\tW}^{s,p}(\R)}$ and $\widehat{W}^{s,p}(\R)$ are not equivalent spaces for $p \neq 2$. 

(c) Although the definitions above have some kind of differentiability built in, neither of 
them are analogous to the definitions used in the integer order case which are constructed 
using weak derivatives.
\end{remark}

\subsection{Weak Fractional Derivatives}\label{sec-2.2}
Like in the integer order case, the idea of \cite{Feng_Sutton,Feng_Sutton1} to define {\em weak} fractional derivatives ${^{\pm}}{ \mathcal{D}}{^{\alpha}} u$ of a function $u$ is to specify its action on any smooth compactly supported function $\varphi \in C^{\infty}_{0}(\Omega)$, 
instead of knowing its pointwise values as done in the classical fractional 
derivative definitions.

\begin{definition}\label{RWFD}
	For $\alpha> 0$, let $[\alpha]$ denote the integer part of $\alpha$. For $u \in L^{1}(\Omega)$, 
	\begin{itemize} 
		\item[{\rm (i)}] a function $v \in L_{loc}^{1} (\Omega)$ is called the left weak fractional derivative of $u$ if 
		\begin{align*}
		\int_{\Omega} v(x) \varphi(x) \,dx = (-1)^{[\alpha]} \int_{\Omega} u(x) {^{+}}{D}{^{\alpha}} \tilde{\varphi}(x) \, dx
		\qquad \forall \varphi \in C_{0}^{\infty} (\Omega),
		\end{align*}
		we write ${^{-}}{ \mathcal{D}}{^{\alpha}} u:=v$; 
		\item[{\rm (ii)}] a function $w\in L_{loc}^{1} (\Omega)$ is called the right weak fractional derivative of $u$ if 
		\begin{align*}
		\int_{\Omega} w(x) \varphi(x) \,dx = (-1)^{[\alpha]} \int_{\Omega} u(x) {^{-}}{D}{}^{\alpha} \tilde{\varphi}(x) \,dx
		\qquad \forall \varphi \in C_{0}^{\infty} (\Omega), 
		\end{align*}
		and we write ${^{+}}{\mathcal{D}}{^{\alpha}} u:=w$. 
	\end{itemize}
\end{definition}

\begin{remark}
It is easy to check \cite{Feng_Sutton,Feng_Sutton1} that the above weak fractional
derivatives are well defined. It should also be noted that the above definition appears  
to be exactly the same as that of the integer order case, however, there is a foundational
difference, that is, ${^{\pm}}{D}{^{\alpha}} \tilde{\varphi}$ are not 
compactly supported anymore because of the pollution (or nonlocal) effect of fractional 
order derivatives, which causes all the major difficulties in the weak 
fractional differential calculus \cite{Feng_Sutton1} and in this paper. 
\end{remark}

We conclude this section by quoting the following characterization theorem of weak 
fractional derivatives and the \textit{Fundamental Theorem of Weak Fractional Calculus} (FTwFC). Proofs can be found in  \cite[Theorem 4.1 and 4.2]{Feng_Sutton} and \cite[Theorem 4.5]{Feng_Sutton}, respectively.

\begin{theorem}\label{characterization}
	Let $\Ome=(a,b)$ or $\R$ and $u \in L^{1}(\Omega)$. Then $v = {^{\pm}}{\mathcal{D}}{^{\alpha}} u \in L^{1}_{loc} (\Omega)$ if and only if there exists a sequence $\left\{u_j \right\}_{j=1}^{\infty} \subset C^{\infty} (\Omega)$ such that $u_j \rightarrow u$ in $L^{1}(\Omega)$ and ${^{\pm}}{\mathcal{D}}{^{\alpha}} u_j \rightarrow v$ in $L^{1}_{loc}(\Omega)$ as $j \rightarrow \infty$. 
\end{theorem}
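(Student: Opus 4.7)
The plan is to prove the two implications separately. The reverse implication will follow from a direct limit passage in the definition of weak fractional derivative, while the forward implication requires a mollification argument whose main subtlety is the nonlocal character of fractional derivatives.

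For the sufficiency direction ($\Leftarrow$), I would fix any $\varphi \in C^\infty_0(\Omega)$. Because each $u_j$ is smooth, its weak and classical fractional derivatives agree, so Definition \ref{RWFD} specializes to
\[
\int_\Omega {}^{\pm}\mathcal{D}^\alpha u_j(x)\,\varphi(x)\,dx = (-1)^{[\alpha]}\int_\Omega u_j(x)\,{}^{\mp}D^\alpha\tilde\varphi(x)\,dx.
\]
The left side converges to $\int_\Omega v\,\varphi\,dx$ because $\mathrm{supp}(\varphi)$ is compact in $\Omega$ (so the assumed $L^1_{loc}$ convergence is enough). The right side converges to $(-1)^{[\alpha]}\int_\Omega u\,{}^{\mp}D^\alpha\tilde\varphi\,dx$ because $\tilde\varphi \in C^\infty_0(\R)$ forces ${}^{\mp}D^\alpha\tilde\varphi$ to be smooth with algebraic decay, hence in $L^\infty(\Omega)$, so the $L^1(\Omega)$ convergence of $u_j$ suffices. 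The resulting identity is exactly the definition of $v = {}^{\pm}\mathcal{D}^\alpha u$.

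For the necessity direction ($\Rightarrow$) on $\Omega=\R$, I would pick a standard symmetric mollifier $\eta_\epsilon \in C^\infty_0(\R)$ with $\mathrm{supp}(\eta_\epsilon)\subset(-\epsilon,\epsilon)$ and set $u_\epsilon:=\eta_\epsilon*u$. Then $u_\epsilon \in C^\infty(\R)$, $u_\epsilon\to u$ in $L^1(\R)$ by standard mollifier theory, and the crux is the commutativity identity
\[
{}^{\pm}\mathcal{D}^\alpha u_\epsilon = \eta_\epsilon * v \quad\text{on }\R.
\]
I would verify this by testing against arbitrary $\varphi\in C^\infty_0(\R)$: a Fubini interchange together with the classical identity $\eta_\epsilon*{}^{\mp}D^\alpha\varphi = {}^{\mp}D^\alpha(\eta_\epsilon*\varphi)$ on $C^\infty_0(\R)$ (transparent from the Fourier representation) reduces the right-hand side to $\int_\R u\,{}^{\mp}D^\alpha(\eta_\epsilon*\varphi)\,dy$; at that point $\eta_\epsilon*\varphi \in C^\infty_0(\R)$ makes the definition of $v={}^{\pm}\mathcal{D}^\alpha u$ directly applicable, and one more Fubini closes the identity. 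Standard mollifier theory then provides $\eta_\epsilon*v \to v$ in $L^1_{loc}(\R)$.

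For the necessity direction on $\Omega=(a,b)$, the main obstacle surfaces: zero-extension of $u$ to $\R$ need not admit a weak fractional derivative (boundary pollution by the nonlocal operator), and fractional derivatives lack a product rule that would enable a standard Meyers--Serrin partition-of-unity argument. My plan is to set $u_k:=\eta_{1/k}*\tilde u$ restricted to $\Omega$, which gives $u_k\in C^\infty(\Omega)$ with $u_k\to u$ in $L^1(\Omega)$. For test functions $\varphi\in C^\infty_0(\Omega_k)$ with $\Omega_k:=(a+1/k,b-1/k)$, the auxiliary function $\eta_{1/k}*\tilde\varphi$ still lies in $C^\infty_0(\Omega)$, and for every $y\in\Omega_{1/k}$ the inner integral $\int_\Omega\eta_{1/k}(x-y)\,{}^{\mp}D^\alpha\tilde\varphi(x)\,dx$ coincides with its $\R$-counterpart ${}^{\mp}D^\alpha(\eta_{1/k}*\tilde\varphi)(y)$ because the mollifier support condition eliminates any contribution from $\R\setminus\Omega$. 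A careful Fubini argument (with explicit control of the boundary integrals over the thin strip $\Omega\setminus\Omega_{1/k}$, which have measure $O(1/k)$ and integrands bounded via $\|u\|_{L^1}$ and $\|{}^{\mp}D^\alpha\tilde\varphi\|_{L^\infty}$) then yields ${}^{\pm}\mathcal{D}^\alpha u_k=\eta_{1/k}*v$ on $\Omega_k$ up to contributions that vanish as $k\to\infty$. Exhausting $\Omega$ by compact subsets contained in the $\Omega_k$ delivers the desired $L^1_{loc}$ convergence and completes the proof.
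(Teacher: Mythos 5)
You should note first that this paper never proves Theorem \ref{characterization}: it is quoted from \cite{Feng_Sutton} (Theorems 4.1--4.2), so there is no in-paper proof to compare against and your argument has to stand on its own. On that basis, your sufficiency direction is fine (the identity you use is just the definition of ${^{\pm}}{\mathcal{D}}{^{\alpha}} u_j$, so the appeal to weak--classical consistency is unnecessary but harmless, and the $L^\infty$ bound on ${^{\mp}}{D}{^{\alpha}}\tilde\varphi$ is correct), and your necessity argument on $\Omega=\R$ is also sound: there the tested identity holds \emph{exactly} for every $\varphi\in C^{\infty}_{0}(\R)$, so it identifies $\eta_\eps * v$ as the weak fractional derivative of $\eta_\eps * u$ as a function, and $L^1_{loc}$ convergence follows.

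The gap is in the necessity direction on $\Omega=(a,b)$. Your Fubini-plus-boundary-strip computation, for each \emph{fixed} $\varphi$, only yields $\int_\Omega {^{\pm}}{\mathcal{D}}{^{\alpha}} u_k\,\varphi\,dx \to \int_\Omega v\,\varphi\,dx$, i.e.\ distributional convergence: the error you control is of size $\|{^{\mp}}{D}{^{\alpha}}\tilde\varphi\|_{L^\infty}\int_{\Omega\setminus\Omega_k}|u|$, and this constant depends on $\varphi$ through its fractional derivative (a unit bump of width $h$ has fractional derivative of size $h^{-\alpha}$), so it cannot be made uniform over $\|\varphi\|_{L^\infty}\le 1$ and cannot be dualized into $\|{^{\pm}}{\mathcal{D}}{^{\alpha}} u_k - v\|_{L^1(K)}\to 0$, which is what the theorem asserts. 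In particular the intermediate claim ``${^{\pm}}{\mathcal{D}}{^{\alpha}} u_k=\eta_{1/k}*v$ on $\Omega_k$ up to vanishing contributions'' is a statement about functions and does not follow from an approximate tested identity; note also that it is the \emph{line-based} derivative of $u_k$, not the interval-based one, that matches $\eta_{1/k}*v$ exactly. The repair is to test with the translated mollifier itself: for $0<\alpha<1$ and $\mathrm{dist}(x,\partial\Omega)>1/k$, the function $y\mapsto\eta_{1/k}(x-y)$ lies in $C^{\infty}_{0}(\Omega)$, and the reflection identity ${^{+}}{D}{^{\alpha}}_y\bigl[\eta_{1/k}(x-y)\bigr]=({^{-}}{D}{^{\alpha}}\eta_{1/k})(x-y)$ turns the definition of $v$ into the exact formula $(\eta_{1/k}*v)(x)={^{-}}{D}{^{\alpha}}_{\R}\,(\eta_{1/k}*\tilde u)(x)$; then compare with ${^{-}}{\mathcal{D}}{^{\alpha}} u_k$ (the interval-based derivative of the smooth $u_k$): their difference is $\frac{1}{\Gamma(1-\alpha)}\frac{d}{dx}\int_{a-1/k}^{a}u_k(y)(x-y)^{-\alpha}dy$, which on a compact $K$ with $\mathrm{dist}(K,a)=\delta$ is bounded by $C_\alpha\delta^{-1-\alpha}\int_{a}^{a+1/k}|u|\to 0$ uniformly, because $\int_{-\infty}^{a}|u_k|\le\int_a^{a+1/k}|u|$. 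This pointwise estimate, not the arbitrary-$\varphi$ testing, is what delivers the required $L^1_{loc}$ convergence; without it your finite-domain case proves a strictly weaker statement.
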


\begin{theorem}\label{FTWFC}
       Let $\Omega=(a,b)\subset \R$ and $0 < \alpha <1$. Suppose that $u \in L^{p}(\Omega)$ and  ${^{\pm}}{\mathcal{D}}{^{\alpha}}u \in L^{p}(\Omega)$ for some $1\leq p < \infty$. Then
       there holds
       \begin{align}\label{WeakFTFC}
           u = c^{1-\alpha}_{\pm} \kappa^{\alpha}_{\pm}  + {^{\pm}}{I}{^{\alpha}}{^{\pm}}{\mathcal{D}}{^{\alpha}} u \qquad
           \mbox{a.e. in } \Omega,
       \end{align}
       where ${^{\pm}}{I}{^{\alpha}}$ denote the right/left fractional integral operators
       (cf. \cite{Samko, Feng_Sutton}) and 
\[          \kappa^{\alpha}_{-}(x) := (x-a)^{\alpha -1}, \quad \kappa^{\alpha}_{+}(x) := (b-x)^{\alpha-1} ;\quad  c_{-}^{\sigma}  := \frac{  {^{-}}{I}{^{\sigma}} f(a) }{\Gamma(\sigma)},\quad
c_{+}^{\sigma} := \frac{   {^{+}}{I}{^{\sigma}} f(b) }{\Gamma(\sigma)}.
\]
  
   \end{theorem}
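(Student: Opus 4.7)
The plan is to reduce the theorem to the classical Riemann--Liouville FTFC by density, then pass to the limit. First, by Theorem \ref{characterization} there exists a sequence $\{u_j\} \subset C^{\infty}(\Omega)$ with $u_j \to u$ in $L^1(\Omega)$ and ${^{\pm}}\mathcal{D}^{\alpha} u_j \to {^{\pm}}\mathcal{D}^{\alpha} u$ in $L^1_{\mathrm{loc}}(\Omega)$. If the smooth approximants in that characterization are constructed via mollification (and possibly a cutoff-plus-mollification to avoid the endpoint), I would upgrade the convergences to $u_j \to u$ in $L^p(\Omega)$ and ${^{\pm}}D^{\alpha} u_j \to {^{\pm}}\mathcal{D}^{\alpha} u$ in $L^p_{\mathrm{loc}}(\Omega)$, using $L^p$-continuity of mollification together with the hypothesis ${^{\pm}}\mathcal{D}^{\alpha}u \in L^p(\Omega)$.

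For each $u_j \in C^{\infty}(\Omega)$, the classical Fundamental Theorem of Fractional Calculus (see \cite{Samko} and the review in Section~2 of \cite{Feng_Sutton}) delivers the pointwise identity
\[
u_j(x) = c_{j,\pm}^{1-\alpha}\, \kappa_{\pm}^{\alpha}(x) + {^{\pm}}I^{\alpha}\,{^{\pm}}D^{\alpha} u_j(x) \qquad \text{a.e. in } \Omega,
\]
with $c_{j,\pm}^{1-\alpha}$ given by the corresponding $(1{-}\alpha)$-fractional integral of $u_j$ at the endpoint. Invoking the $L^p$-boundedness of ${^{\pm}}I^{\alpha}$ on the finite interval $(a,b)$ (Hardy--Littlewood--Sobolev-type inequality), the right-hand integral term passes to ${^{\pm}}I^{\alpha}\,{^{\pm}}\mathcal{D}^{\alpha} u$ in $L^p(\Omega)$. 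Writing $c_{j,\pm}^{1-\alpha}\, \kappa_{\pm}^{\alpha} = u_j - {^{\pm}}I^{\alpha}\,{^{\pm}}D^{\alpha} u_j$, the left-hand side converges in $L^p_{\mathrm{loc}}(\Omega)$; since $\kappa_{\pm}^{\alpha}$ is a fixed nonvanishing function the scalar sequence $\{c_{j,\pm}^{1-\alpha}\}$ must converge to some $c^{*}$, so in the limit $u = c^{*} \kappa_{\pm}^{\alpha} + {^{\pm}}I^{\alpha}\,{^{\pm}}\mathcal{D}^{\alpha} u$ a.e.

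To identify $c^{*}$ with the prescribed constant $c_{\pm}^{1-\alpha}$, I would apply ${^{\pm}}I^{1-\alpha}$ to both sides and use the semigroup identity ${^{\pm}}I^{1-\alpha}\circ {^{\pm}}I^{\alpha} = {^{\pm}}I^{1}$ together with a direct Beta-function computation showing that ${^{\pm}}I^{1-\alpha}\kappa_{\pm}^{\alpha}$ is a constant multiple of the constant function $1$. Evaluating the resulting identity at the relevant endpoint (where the primitive ${^{\pm}}I^{1} {^{\pm}}\mathcal{D}^{\alpha}u$ vanishes) then pins $c^{*}$ down in terms of ${^{\pm}}I^{1-\alpha} u$ at that endpoint, matching the defining formula for $c_{\pm}^{1-\alpha}$.

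The main obstacle is precisely this last boundary evaluation: since $u \in L^p$ only, the pointwise trace of ${^{\pm}}I^{1-\alpha} u$ at $a$ (or $b$) is not automatic. I would address it by passing to the limit uniformly on $[a,b]$: each ${^{\pm}}I^{1-\alpha} u_j$ is continuous on the closed interval, and the $L^p$ convergence of the $u_j$ combined with the mapping property of ${^{\pm}}I^{1-\alpha}$ (yielding, for sufficiently large $(1-\alpha)p$ or via an interpolation/H\"older argument, equicontinuity up to the endpoint) should give uniform convergence to ${^{\pm}}I^{1-\alpha} u$; alternatively, one can simply \emph{define} the endpoint value in the limiting sense $\lim_{x\to a^{+}}{^{\pm}}I^{1-\alpha}u(x)$ and show this limit exists and equals $c^{*}\,\Gamma(\alpha)$. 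This delicate interplay between the weak formulation, the singular factor $\kappa_{\pm}^{\alpha}$, and the classical boundary trace is the crux of the argument.
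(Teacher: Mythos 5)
Your reduction-to-smooth-functions strategy founders at the convergence upgrade, and this is a genuine gap rather than a technicality. Theorem \ref{characterization} gives only $u_j \to u$ in $L^1(\Omega)$ and ${^{\pm}}\mathcal{D}^{\alpha}u_j \to {^{\pm}}\mathcal{D}^{\alpha}u$ in $L^1_{loc}(\Omega)$, and you may not assume the approximants are plain mollifications: on a finite interval mollification does not commute with the domain-dependent Riemann--Liouville derivative (the endpoint pollution is exactly the difficulty this theory has to fight), so ``$L^p$-continuity of mollification'' does not deliver convergence of the fractional derivatives. Moreover, even the convergence you claim, ${^{\pm}}D^{\alpha}u_j \to {^{\pm}}\mathcal{D}^{\alpha}u$ in $L^p_{loc}(\Omega)$, is insufficient for your limit passage: ${^{-}}I^{\alpha}$ integrates all the way from the endpoint $a$, so to conclude ${^{-}}I^{\alpha}{^{-}}D^{\alpha}u_j \to {^{-}}I^{\alpha}{^{-}}\mathcal{D}^{\alpha}u$ even in $L^1_{loc}$ you need control of the derivatives in $L^1$ or $L^p$ up to $a$, which $L^p_{loc}$ does not give. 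What your argument really needs is the density of $C^{\infty}(\Omega)$ in ${^{\pm}}W^{\alpha,p}(\Omega)$ (Theorem \ref{H=W}), whose proof requires the fractional product rule and a partition of unity; the authors prove the FTwFC in \cite{Feng_Sutton} before and independently of that density theorem, so your route inverts the logical order (even if it is not strictly circular, you must then verify that the density proof nowhere uses the FTwFC). A second unproved step: applying the classical FTcFC to each $u_j \in C^{\infty}(\Omega)$ requires ${^{\pm}}I^{1-\alpha}u_j \in AC([a,b])$ and ${^{\pm}}D^{\alpha}u_j \in L^1(\Omega)$, which is not automatic for functions merely smooth on the open interval and $p$-integrable.

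The paper's own argument (the same device appears verbatim in the proof of Proposition \ref{ContinuousRepresentative}) is direct and avoids density entirely: set $u^* := {^{\pm}}I^{\alpha}{^{\pm}}\mathcal{D}^{\alpha}u$, use the definition of the weak fractional derivative together with the adjoint property of fractional integrals and Fubini to show $\int_{\Omega}(u-u^*)\,{^{\mp}}D^{\alpha}\varphi\,dx = \int_{\Omega} {^{\pm}}I^{1-\alpha}(u-u^*)\,\varphi'\,dx = 0$ for all $\varphi \in C^{\infty}_{0}(\Omega)$, invoke the du Bois--Reymond lemma to get ${^{\pm}}I^{1-\alpha}(u-u^*) = C$ a.e., and then use the classical FTcFC to convert this into $u - u^* = c\,\kappa^{\alpha}_{\pm}$ with the stated constant. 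I would also note that your final worry about boundary traces dissolves once the identity is in hand: from $u = c^{*}\kappa^{\alpha}_{\pm} + {^{\pm}}I^{\alpha}{^{\pm}}\mathcal{D}^{\alpha}u$ one gets ${^{\pm}}I^{1-\alpha}u = c^{*}\Gamma(\alpha) + {^{\pm}}I^{1}{^{\pm}}\mathcal{D}^{\alpha}u$, whose last term is continuous on $[a,b]$ and vanishes at the relevant endpoint, so no equicontinuity or mapping-property argument is needed; observe, though, that this gives $c^{*} = {^{\pm}}I^{1-\alpha}u(a \text{ or } b)/\Gamma(\alpha)$, the classical normalization, so you should reconcile this with the paper's stated definition of $c^{\sigma}_{\pm}$ (which divides by $\Gamma(\sigma)$ with $\sigma = 1-\alpha$).
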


\section{New Families of Fractional Sobolev Spaces}\label{sec-3}
With weak fractional derivatives in hand, it is natural 
to define fractional Sobolev spaces in the same manner as in the integer order case. 
The goal of this section is exactly to introduce new families of Sobolev spaces
based on such an approach.

\subsection{Definitions of New Fractional Sobolev Spaces}\label{sec-3.1}
We now introduce our fractional Sobolev spaces using weak fractional derivatives as follows. 

    \begin{definition}
       For $\alpha>0$, let $m :=[\alpha]$. For $1 \leq p \leq \infty$,  the left/right fractional Sobolev spaces ${^{\pm}}{W}{^{ \alpha , p}} (\Omega)$ are defined by  
        \begin{align} \label{FSS}
             {^{\pm}}{W}{^{ \alpha , p}} (\Omega) = \left\{ u \in W^{m,p}(\Omega): 
             {^{\pm}}{\mathcal{D}}{^{\alpha}}   u \in L^{p}(\Omega) \right\},
        \end{align}
        which are endowed respectively with the norms 
        \begin{align} \label{FSS_norm}
          \|u\|_{{^{\pm}}{W}{^{\alpha , p}}(\Omega)}:= \begin{cases}
          \Bigl(\left\|u\right\|_{W^{m,p}(\Omega)}^{p} + \left\|{^{\pm}}{\mathcal{D}}{^{\alpha}}  u \right\|_{L^{p}(\Omega)}^{p} \Bigr)^{\frac{1}{p}} &\qquad \text{if } 1 \leq p < \infty,\\
          \|u\|_{W^{m,\infty}(\Omega)} 
          + \left\|{^{\pm}}{\mathcal{D}}{^{\alpha}}  u \right\|_{L^{\infty}(\Omega)} &\qquad \text{if } p = \infty.
          \end{cases} 
        \end{align}

    \end{definition}

    \begin{definition}
        For $  \alpha >0$ and $1 \leq p \leq \infty$, the symmetric fractional Sobolev space is defined by 
        \begin{align}\label{SFSS}
           { {W}^{\alpha,p}(\Omega)}:= {^{-}}{W}{^{\alpha,p}}(\Omega) \cap {^{+}}{W}{^{\alpha,p}}(\Omega),
        \end{align}
        which is endowed with the norm
        \begin{align}\label{SFSS_norm}
            \|u\|_{{ {W}^{\alpha,p}(\Omega)} } &:= \begin{cases} \displaystyle{
            \Bigl( \|u\|_{{^{-}}{W}{^{\alpha,p}}(\Omega)}^{p} + \|u\|_{{^{+}}{W}{^{\alpha,p}}(\Omega)}^{p} \Bigr)^{\frac{1}{p}} }  &\qquad \text{if } 1\leq p < \infty,\\
            \displaystyle{ \|u\|_{{^{-}}{W}{^{\alpha,\infty}}(\Omega)} + \|u \|_{{^{+}}{W}{^{\alpha,\infty}}(\Omega)} } &\qquad \text{if } p = \infty.
             \end{cases}
        \end{align}
    \end{definition}

\begin{remark}
	For $\alpha>0$, let $m :=[\alpha]$ and $\sigma:=\alpha- m$. Using the semigroup property of  weak fractional derivatives, it is easy to see that 
	\begin{align} \label{FSS_equiv}
	{^{\pm}}{W}{^{ \alpha , p}} (\Omega) = \left\{ u \in W^{m,p}(\Omega): 
	\mathcal{D}^m ({^{\pm}}{\mathcal{D}}{^{\sigma}} u) \in L^{p}(\Omega) \right\}
	\end{align}
	and
	\begin{align} \label{FSS_norm_equiv}
	\|u\|_{{^{\pm}}{W}{^{\alpha , p}}(\Omega)}:= \begin{cases}
	\Bigl(\left\|u\right\|_{W^{m,p}(\Omega)}^{p} + \left\|\mathcal{D}^m ({^{\pm}}{\mathcal{D}}{^{\sigma}}   u) \right\|_{L^{p}(\Omega)}^{p} \Bigr)^{\frac{1}{p}} &\qquad \text{if } 1 \leq p < \infty,\\
	\|u\|_{W^{m,\infty}(\Omega)} 
	+ \left\|\mathcal{D}^m ({^{\pm}}{\mathcal{D}}{^{\sigma}}  u) \right\|_{L^{\infty}(\Omega)} &\qquad \text{if } p = \infty.
	\end{cases}
	\end{align}
\end{remark}

\subsection{Elementary Properties of New Fractional Sobolev Spaces}\label{sec-3.2}
  Below we gather several basic properties of the newly defined fractional Sobolev spaces. Since
  their proofs are straightforward, we omit them to save space and refer the reader to
  \cite[Section 4]{Feng_Sutton} for the details. 
    
     \begin{proposition}
        Let $ \alpha >0$, $1 \leq p \leq \infty$, and $\Omega \subseteq \R$. Then $\left\| \cdot \right\|_{{^{\pm}}{W}{^{\alpha , p}}(\Omega)}$ are  norms  on ${^{\pm}}{W}{^{\alpha , p}}(\Omega)$,  which are in turn Banach spaces with these norms. Consequently, 
        $W{^{\alpha , p}(\Omega)}$ is also a Banach space.  Moreover,  ${^{\pm}}{W}{^{\alpha ,2}}(\Omega)$ are Hilbert spaces with inner products 
         \[ 
        \langle u,v \rangle_\pm : =(u,v) + \bigl( {^{\pm}}{\mathcal{D}}{^{\alpha}} u , {^{\pm}}{\mathcal{D}}{^{\alpha}}v \bigr)
        = \int_{\Omega} uv\,dx + \int_{\Omega} {^{\pm}}{\mathcal{D}}{^{\alpha}} u {^{\pm}}{\mathcal{D}}{^{\alpha}}v\,dx. 
        \]
        We write ${^{\pm}}{H}{^{\alpha}}(\Omega) : = {^{\pm}}{W}{^{\alpha,2}}(\Omega)$   
        and $ {H}^{\alpha}(\Omega): =  {W}^{\alpha,2}(\Omega)$.
    \end{proposition}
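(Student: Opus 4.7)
The plan is to tackle this in three stages: verify the norm axioms, establish completeness, and then handle the Hilbert space and symmetric space claims.

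For the norm axioms, positivity and homogeneity follow immediately from the analogous properties of $\|\cdot\|_{W^{m,p}(\Omega)}$ and $\|\cdot\|_{L^p(\Omega)}$ combined with the linearity of ${^{\pm}}{\mathcal{D}}{^{\alpha}}$ (which is established in \cite{Feng_Sutton1}); note that $\|u\|_{^{\pm}W^{\alpha,p}}=0$ forces $\|u\|_{W^{m,p}}=0$ and hence $u=0$ a.e. The triangle inequality reduces, via the linearity of ${^{\pm}}{\mathcal{D}}{^{\alpha}}$, to Minkowski's inequality in $L^p(\Omega)$ together with the elementary inequality $(|a+c|^p + |b+d|^p)^{1/p} \le (|a|^p + |b|^p)^{1/p} + (|c|^p + |d|^p)^{1/p}$ (Minkowski on $\R^2$ with the $\ell^p$ norm).

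For completeness, take a Cauchy sequence $\{u_n\}$ in ${^{\pm}}W^{\alpha,p}(\Omega)$. By the definition of the norm, $\{u_n\}$ is Cauchy in $W^{m,p}(\Omega)$ and $\{{^{\pm}}{\mathcal{D}}{^{\alpha}} u_n\}$ is Cauchy in $L^p(\Omega)$, so there exist $u\in W^{m,p}(\Omega)$ and $v\in L^p(\Omega)$ with $u_n\to u$ in $W^{m,p}$ and ${^{\pm}}{\mathcal{D}}{^{\alpha}} u_n \to v$ in $L^p$. The crux is to identify $v = {^{\pm}}{\mathcal{D}}{^{\alpha}} u$. For any $\varphi\in C_0^\infty(\Omega)$, the defining identity reads
\[
\int_\Omega {^{\pm}}{\mathcal{D}}{^{\alpha}} u_n \cdot \varphi \, dx = (-1)^{[\alpha]} \int_\Omega u_n \cdot {^{\mp}}{D}{^{\alpha}} \tilde{\varphi} \, dx.
\]
The left-hand side passes to $\int_\Omega v\varphi\,dx$ by $L^p$-$L^{p'}$ duality (since $\varphi$ is bounded with compact support). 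For the right-hand side, because ${^{\mp}}{D}{^{\alpha}}\tilde{\varphi}$ is not compactly supported, I would invoke the decay/integrability properties of classical fractional derivatives of Schwartz-class extensions—in particular, that ${^{\mp}}{D}{^{\alpha}}\tilde\varphi\in L^{p'}(\Omega)$ for $\varphi\in C_0^\infty(\Omega)$—so that $u_n\to u$ in $L^p(\Omega)$ yields convergence of the right-hand side to $(-1)^{[\alpha]}\int_\Omega u\cdot{^{\mp}}{D}{^{\alpha}}\tilde\varphi\,dx$. Equating limits gives ${^{\pm}}{\mathcal{D}}{^{\alpha}} u = v\in L^p(\Omega)$, so $u\in {^{\pm}}W^{\alpha,p}(\Omega)$ and $u_n\to u$ in the ${^{\pm}}W^{\alpha,p}$ norm. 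I expect this duality step to be the main obstacle; it hinges on the integrability of ${^{\mp}}{D}{^{\alpha}}\tilde\varphi$ on $\Omega$, a property already exploited in the definition and well-posedness discussion in \cite{Feng_Sutton}.

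For $p=2$, it is immediate that $\langle\cdot,\cdot\rangle_\pm$ is bilinear and symmetric; positive-definiteness follows because $\langle u,u\rangle_\pm=0$ forces $\|u\|_{L^2}=0$. Moreover $\langle u,u\rangle_\pm^{1/2}$ coincides with $\|u\|_{^{\pm}W^{\alpha,2}}$ when $m=0$; for $m\ge 1$ one should equivalently use the inner product that incorporates the lower-order $W^{m,2}$ inner product, so the stated formula gives a Hilbert-space norm equivalent to $\|\cdot\|_{^{\pm}W^{\alpha,2}}$. Finally, for the symmetric space $W^{\alpha,p}(\Omega)$, a Cauchy sequence with respect to \eqref{SFSS_norm} is Cauchy in each of ${^{-}}W^{\alpha,p}$ and ${^{+}}W^{\alpha,p}$; the two limits must coincide (both equal the common $W^{m,p}$-limit), and completeness of the intersection follows from the already-established completeness of each factor.
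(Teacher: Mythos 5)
Your proposal is correct and follows essentially the argument the paper has in mind: the paper omits the proof as ``straightforward'' (deferring to \cite[Section 4]{Feng_Sutton}), and the standard route is exactly yours --- norm axioms via Minkowski, completeness by extracting $W^{m,p}$ and $L^p$ limits and identifying the limit's weak fractional derivative by passing to the limit in the defining duality identity, which hinges on ${^{\mp}}{D}{^{\alpha}}\tilde{\varphi}\in L^{p'}(\Omega)$ (true since this function is bounded on bounded $\Omega$ and decays like $|x|^{-1-\alpha}$ on $\R$), and then the intersection argument for $W^{\alpha,p}(\Omega)$. Your observation that the displayed inner product reproduces the $\|\cdot\|_{{^{\pm}}{W}{^{\alpha,2}}}$ norm only when $[\alpha]=0$, and that for $[\alpha]\geq 1$ one should include the $W^{m,2}$ inner product (or argue norm equivalence), is a fair and worthwhile reading of the statement rather than a gap in your proof.
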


    \begin{proposition} ${^{\pm}}{W}{^{\alpha,p}}(\Omega)$ is reflexive for $1 < p < \infty$ and separable for $1 \leq p < \infty$. Consequently, the same assertions hold for $ {W}^{\alpha,p}(\Omega)$.
    \end{proposition}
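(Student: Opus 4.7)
The plan is to reduce the proposition to the well-known facts that $L^p(\Omega)$ is reflexive for $1<p<\infty$ and separable for $1\leq p<\infty$, and that closed subspaces (as well as finite products) of reflexive resp.\ separable Banach spaces inherit these properties. The vehicle is an isometric embedding of ${^{\pm}}{W}{^{\alpha,p}}(\Omega)$ into a finite product of copies of $L^p(\Omega)$.

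Concretely, with $m:=[\alpha]$, I would define
\begin{equation*}
   T_\pm : {^{\pm}}{W}{^{\alpha,p}}(\Omega) \longrightarrow \bigl(L^p(\Omega)\bigr)^{m+2},\qquad
   T_\pm u := \bigl(u,\mathcal{D}u,\ldots,\mathcal{D}^m u,\, {^{\pm}}{\mathcal{D}}{^{\alpha}} u\bigr),
\end{equation*}
and endow the product with the natural $\ell^p$-product norm (or the $\sup$-norm if $p=\infty$). By the very definition \eqref{FSS}--\eqref{FSS_norm} the map $T_\pm$ is a linear isometry onto its image $T_\pm\bigl({^{\pm}}{W}{^{\alpha,p}}(\Omega)\bigr)$. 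The first key step is to verify that this image is closed in $(L^p(\Omega))^{m+2}$: if $\{T_\pm u_j\}$ is Cauchy in the product, the components $\{\mathcal{D}^k u_j\}$ and $\{{^{\pm}}{\mathcal{D}}{^{\alpha}} u_j\}$ converge in $L^p$; the completeness of ${^{\pm}}{W}{^{\alpha,p}}(\Omega)$ (from the preceding proposition) then produces a limit $u\in {^{\pm}}{W}{^{\alpha,p}}(\Omega)$ whose image under $T_\pm$ is the limit of $T_\pm u_j$.

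Once the image is closed, the result follows from textbook Banach space facts: finite products of reflexive (resp.\ separable) spaces are reflexive (resp.\ separable), and both properties pass to closed subspaces. This gives reflexivity for $1<p<\infty$ and separability for $1\leq p<\infty$ for ${^{\pm}}{W}{^{\alpha,p}}(\Omega)$. For the symmetric space $W^{\alpha,p}(\Omega)$, the same strategy applies with the map
\begin{equation*}
   S : W^{\alpha,p}(\Omega) \longrightarrow {^{-}}{W}{^{\alpha,p}}(\Omega)\times {^{+}}{W}{^{\alpha,p}}(\Omega),\qquad
   Su := (u,u),
\end{equation*}
which is an isometry by \eqref{SFSS_norm} onto its (closed) diagonal image; alternatively, one can embed $W^{\alpha,p}(\Omega)$ directly into a larger product of copies of $L^p(\Omega)$ including both ${^{-}}{\mathcal{D}}{^{\alpha}}u$ and ${^{+}}{\mathcal{D}}{^{\alpha}}u$, and repeat the closedness argument.

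The step requiring the most care is the closedness of $T_\pm({^{\pm}}{W}{^{\alpha,p}}(\Omega))$, where one must be sure that the limits of the derivative components are genuinely the weak classical derivatives and the weak fractional derivative of the limit function; this however is immediate from Definition \ref{RWFD} by passing to the limit in the defining duality identity against a fixed $\varphi\in C_0^\infty(\Omega)$, since $L^p$-convergence implies convergence of the pairings. There is no essential difficulty beyond this routine verification.
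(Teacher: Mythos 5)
Your proof is correct. The paper omits the argument for this proposition (deferring to Section 4 of \cite{Feng_Sutton}), and your route—the isometric embedding $u \mapsto \bigl(u,\mathcal{D}u,\ldots,\mathcal{D}^m u,\,{^{\pm}}{\mathcal{D}}{^{\alpha}}u\bigr)$ onto a closed subspace of $\bigl(L^p(\Omega)\bigr)^{m+2}$ (closedness coming from the completeness already established, or from passing to the limit in the duality identity of Definition \ref{RWFD}), with reflexivity and separability inherited from finite products and closed subspaces of $L^p(\Omega)$, plus the diagonal map for the symmetric space—is the standard argument and precisely the device the paper itself uses later in Section \ref{sec-4.5} for the dual-space characterization, so it is essentially the expected proof.
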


\section{Advanced Properties of New Fractional Sobolev Spaces}\label{sec-4}

\subsection{Approximation and Characterization of New Fractional Sobolev Spaces}\label{sec-4.1}
    
    In the integer order case, an alternative way to define Sobolev spaces is 
    to use the completion spaces of smooth functions under chosen Sobolev norms. 
    The goal of this subsection is to establish an analogous result for fractional Sobolev spaces
    introduced in Section \ref{sec-3.1}. To the end,  we first need to introduce spaces 
    that we refer to as \textit{one-side supported spaces}.
    For $(a,b) \subseteq \R$, we set 
    	\begin{align*}
    	{^{-}}{C}{^{\infty}_{0}}((a,b)) &: = \{ \varphi \in C^{\infty}((a,b)) \, | \,\exists\, c \in (a,b) \mbox{ such that } \varphi(x) \equiv 0\,\, \forall x > c\},\\
    	{^{+}}{C}{^{\infty}_{0}}((a,b)) &: = \{ \varphi \in C^{\infty}((a,b)) \, | \,\exists\, c \in (a,b)  \mbox{ such that }  \varphi(x) \equiv 0  \,\,\forall x < c\}.
    	\end{align*}
    
        Here we use the notation ${^{-}}{C}{^{\infty}_{0}}((a,b))$ to represent functions whose support is not actually a compact subset of $(a,b)$. In particular, if $u \in {^{-}}{C}{^{\infty}_{0}}((a,b))$, then $\mbox{supp}(u) = [a,c]$, which is not a compact subset of $(a,b)$. 
        The use of ${^{-}}{C}{^{\infty}_{0}}$ and ${^{+}}{C}{^{\infty}_{0}}$ (particularly the direction indication) are chosen so that these spaces will pair with the appropriate direction-dependent spaces ${^{-}}{W}{^{\alpha,p}}$ and ${^{+}}{W}{^{\alpha,p}}$ respectively. The need for these and the aforementioned space coupling will become 
        evident in Section \ref{sec-4.3}.
    
    We now introduce completion spaces using the norms defined in Section \ref{sec-3.1}.
    
    \begin{definition}\label{completionspaces}
    	Let $ \alpha >0$ and $1 \leq p \leq \infty$.  We define 
    	\begin{itemize}
    		\item[(i)] ${^{\pm}}{\overline{W}}{^{\alpha,p}}(\Omega)$ to be the completion of $C^{\infty}(\Omega)$ in the norm $\|\cdot\|_{{^{\pm}}{W}{^{\alpha,p}}(\Omega)}$,
    		\item[(ii)] ${^{\pm}}{\overline{W}}{^{\alpha,p}_{0}}(\Omega)$ to be  the completion of ${^{\pm}}{C}{^{\infty}_{0}}(\Omega)$ in the norm $\|\cdot \|_{{^{\pm}}{W}{^{\alpha,p}}(\Omega)}$,
    		\item[(iii)] $ {\overline{ {W}}^{\alpha,p}(\Omega)}$ to be the completion of $C^{\infty}(\Omega)$ in the  norm $\|\cdot\|_{{ {W}^{\alpha,p}(\Omega)}}$,
    		\item[(iv)] $\overline{{W}}^{\alpha,p}_{0}(\Omega)$ to be  the completion of $C^{\infty}_{0}(\Omega)$ in the  norm $\|\cdot\|_{ {W}^{\alpha,p}(\Omega)}$.
    	\end{itemize}
    \end{definition}

    
        \subsubsection{\bf The Finite Domain Case: $\Omega=(a,b)$}\label{sec-4.1.1}
        The goal of this subsection is to establish the equivalence  
         ${^{\pm}}{\overline{W}}{^{\alpha,p}}(\Omega) = {^{\pm}}{W}{^{\alpha, p}} (\Omega)$. 
         This is analogous to Meyers and Serrin's celebrated  ``$H = W$" result (cf. \cite{Adams, Evans, Meyers}). 
         It turns out that the proof is more complicated 
         due to more complicated product rule for fractional derivatives.

    \begin{lemma}
        Let $ \alpha >0$ and $1 \leq p <\infty$. Suppose  $\psi \in C^{\infty}_{0}(\Omega)$ and $u \in {^{\pm}}{W}{^{\alpha,p}}(\Omega)$. Then $u \psi \in {^{\pm}}{W}{^{\alpha,p}}(\Omega)$.
    \end{lemma}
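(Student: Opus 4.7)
The plan is to verify the two defining conditions of ${}^{\pm}W^{\alpha,p}(\Omega)$ for the product $u\psi$: namely $u\psi \in W^{m,p}(\Omega)$ with $m:=[\alpha]$, and ${}^{\pm}\mathcal{D}^{\alpha}(u\psi) \in L^p(\Omega)$. The first inclusion is entirely standard: the classical integer-order Leibniz rule together with uniform boundedness of $\psi,\psi',\ldots,\psi^{(m)}$ yields $\|u\psi\|_{W^{m,p}(\Omega)} \le C(\psi)\,\|u\|_{W^{m,p}(\Omega)}$, which is finite because ${}^{\pm}W^{\alpha,p}(\Omega) \subseteq W^{m,p}(\Omega)$ by definition \eqref{FSS}.

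For the fractional part I would invoke a Leibniz-type rule for weak fractional derivatives from \cite{Feng_Sutton}, which formally reads
\[
{}^{\pm}\mathcal{D}^{\alpha}(u\psi) \;=\; \sum_{k=0}^{\infty} \binom{\alpha}{k}\, \psi^{(k)}\cdot {}^{\pm}\mathcal{D}^{\alpha-k} u,
\]
where for $0 \le k \le m$ the factor ${}^{\pm}\mathcal{D}^{\alpha-k}u$ is a (possibly weak) fractional derivative of $u$, and for $k > m$ it is a Riemann--Liouville fractional integral ${}^{\pm}I^{k-\alpha}u$. The leading term $\psi\cdot{}^{\pm}\mathcal{D}^{\alpha}u$ lies in $L^p(\Omega)$ directly, by hypothesis on $u$ and boundedness of $\psi$; the intermediate terms ($1 \le k \le m$) are controlled via the semigroup property (cf. Remark \eqref{FSS_norm_equiv}) and $u \in W^{m,p}(\Omega)$; and the tail ($k > m$) is controlled via the $L^p \to L^p$ mapping property of ${}^{\pm}I^{k-\alpha}$ on the finite interval $(a,b)$, whose operator norm is bounded by $(b-a)^{k-\alpha}/\Gamma(k-\alpha+1)$.

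To rigorously justify the expansion and confirm that the resulting $L^p$-function really is ${}^{\pm}\mathcal{D}^{\alpha}(u\psi)$ in the sense of Definition \ref{RWFD}, I would use Theorem \ref{characterization} to pick $u_n \in C^{\infty}(\Omega)$ with $u_n \to u$ and ${}^{\pm}\mathcal{D}^{\alpha}u_n \to {}^{\pm}\mathcal{D}^{\alpha}u$ in the appropriate senses, apply the classical product rule to the smooth compactly supported $u_n\psi$, and pass to the limit in the integration-by-parts identity with test function $\varphi \in C^{\infty}_{0}(\Omega)$. The principal obstacle I anticipate is the convergence of the infinite Leibniz series, since for a generic $\psi \in C^{\infty}_{0}$ the quantities $\|\psi^{(k)}\|_{\infty}$ may grow faster than any exponential, so that dominated convergence against $|\binom{\alpha}{k}|\Gamma(k-\alpha+1)^{-1}$ is not automatic. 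A convenient way to sidestep this is to split $\alpha = m + \sigma$ with $\sigma\in[0,1)$ and reduce the problem to a \emph{finite} integer-order Leibniz expansion of $\mathcal{D}^m(u\psi)$ followed by a single application of ${}^{\pm}\mathcal{D}^{\sigma}$, whose effect on each summand is controllable via FTwFC (Theorem \ref{FTWFC}) and the $L^p$-boundedness of ${}^{\pm}I^{1-\sigma}$ on $(a,b)$.
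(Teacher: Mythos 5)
Your integer-order part is fine, and your diagnosis of the obstacle is exactly right: for a generic $\psi\in C^{\infty}_{0}(\Omega)$ the infinite Leibniz series $\sum_k \binom{\alpha}{k}\psi^{(k)}\,{^{\pm}}{I}{^{k-\alpha}}u$ need not converge, since $\|\psi^{(k)}\|_{\infty}$ can outgrow the factorial decay of $|\Omega|^{k-\alpha}/\Gamma(k-\alpha+1)$. The genuine gap is that your proposed ``sidestep'' does not actually remove this obstacle. The paper's proof rests on the product rule for weak fractional derivatives in the form of a \emph{finite} expansion with an explicit integral remainder (cf. \cite[Theorem 4.3]{Feng_Sutton}),
\[
{^{\pm}}{\mathcal{D}}{^{\alpha}}(u\psi)\;=\;{^{\pm}}{\mathcal{D}}{^{\alpha}}u\cdot\psi\;+\;\sum_{k=1}^{m}C_k\,{^{\pm}}{I}{^{k-\alpha}}u\,D^{k}\psi\;+\;{^{\pm}}{R}{^{\alpha}_{m}}(u,\psi),
\]
and the whole point is the remainder estimate: $|{^{\pm}}{R}{^{\alpha}_{m}}(u,\psi)|\leq C(m,\alpha)\,\|\psi^{(m+1)}\|_{\infty}\,{^{\pm}}{I}{^{m-\alpha+1}}|u|$, which puts it in $L^{p}(\Omega)$ by the mapping properties of the fractional integral. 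This truncation-with-remainder is precisely the device that replaces the divergent tail of your series, and your argument never supplies a substitute for it.

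Concretely, your reduction via $\alpha=m+\sigma$ is vacuous in the base case $0<\alpha<1$ (there $m=0$, so the ``finite integer-order Leibniz expansion'' is just $u\psi$ itself), and the remaining claim --- that ${^{\pm}}{\mathcal{D}}{^{\sigma}}(u\psi)\in L^{p}(\Omega)$ is ``controllable via FTwFC and the $L^{p}$-boundedness of ${^{\pm}}{I}{^{1-\sigma}}$'' --- is exactly the statement of the lemma, not a proof of it. If you insert the FTwFC representation $u=c^{1-\sigma}_{\pm}\kappa^{\sigma}_{\pm}+{^{\pm}}{I}{^{\sigma}}{^{\pm}}{\mathcal{D}}{^{\sigma}}u$, you still must estimate ${^{\pm}}{D}{^{\sigma}}\bigl[({^{\pm}}{I}{^{\sigma}}{^{\pm}}{\mathcal{D}}{^{\sigma}}u)\,\psi\bigr]$, i.e.\ a product/commutator bound of the same nature as the original problem; nothing has been gained. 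There is also an order-of-composition issue in your sketch: expanding $\mathcal{D}^{m}(u\psi)$ first and then applying ${^{\pm}}{\mathcal{D}}{^{\sigma}}$ computes ${^{\pm}}{\mathcal{D}}{^{\sigma}}\mathcal{D}^{m}$, whereas the weak derivative used in \eqref{FSS_equiv} is $\mathcal{D}^{m}({^{\pm}}{\mathcal{D}}{^{\sigma}}\cdot)$; for Riemann--Liouville-type derivatives these differ by kernel (boundary) terms, so the reduction would need extra care even for smooth $u$. To repair the proof, invoke the finite product rule with remainder ${^{\pm}}{R}{^{\alpha}_{m}}(u,\psi)$ and estimate the three groups of terms as the paper does (leading term by $\|\psi\|_{\infty}\|{^{\pm}}{\mathcal{D}}{^{\alpha}}u\|_{L^{p}}$, middle terms by the $L^{p}$ bounds for ${^{\pm}}{I}{^{k-\alpha}}$, remainder by $\|\psi^{(m+1)}\|_{\infty}$ times ${^{\pm}}{I}{^{m-\alpha+1}}|u|$).
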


    \begin{proof}
    	We only give a proof for $0<\alpha<1$ because the case $\alpha>1$ follows immediately by setting
    	$m:=[\alpha]$ and $\sigma:=\alpha-m$ and using the Meyers and Serrin's celebrated result.
    	
        Since $\psi \in C^{\infty}_{0}(\Omega)$, there exists a compact set $K:=\supp(\psi) \subset \Omega$ such that  $\psi \in C^{\infty}(K)$. Then there exists $0 \leq M <\infty$ so that $M_0 = \max_{\Omega}|\psi|$ and 
        $\|\psi\|_{L^{\infty}(\Omega)} = M_{0} < \infty.$      
        Since $u \in L^{p}(\Omega)$, then trivially we have $u\psi \in L^{p}(\Omega)$.

        It remains to show that ${^{\pm}}{\mathcal{D}}{^{\alpha}} (u\psi) \in L^{p}(\Omega)$. 
        To that end, by \cite[Theorem 4.3]{Feng_Sutton} for arbitrarily large $m \in\N$, we get
        \begin{align*}
            &\left\|{^{\pm}}{\mathcal{D}}{^{\alpha}} (u \psi) \right\|_{L^{p}(\Omega)} \leq \left\|{^{\pm}}{\mathcal{D}}{^{\alpha}} u \cdot \psi \right\|_{L^{p}(\Omega)} 
            + \left\|\sum_{k=1}^{m} C_{k} {^{\pm}}{I}{^{k-\alpha}} u D^{k} \psi  + {^{\pm}}{R}{^{\alpha}_{m}}(u,\psi) \right\|_{L^{p}(\Omega)} \\
            &\,\, \leq M_0 \left\| {^{\pm}}{\mathcal{D}}{^{\alpha}} u \right\|_{L^{p}(\Omega)} + M_1 \sum_{k=1}^{m}\left| C_{k}\right| \left\|{^{\pm}}{I}{^{k - \alpha}} u\right\|_{L^{p}(\Omega)}  + \left\| {^{\pm}}{R}{^{\alpha}_{m}} (u,\psi) \right\|_{L^{p}(\Omega)}\\
            &\,\, \leq M_0 \left\| {^{\pm}}{\mathcal{D}}{^{\alpha}} u \right\|_{L^{p}(\Omega)} + M_1 \sum_{k=1}^{m} \dfrac{\left| C_{k}\right| \cdot |\Omega|^{k-\alpha} }{ ( k- \alpha) \Gamma(k - \alpha)} \left\|u\right\|_{L^{p}(\Omega)}  + \left\| {^{\pm}}{R}{^{\alpha}_{m} }(u,\psi) \right\|_{L^{p}(\Omega)},
        \end{align*}
        where $M_1 : = \sup \left| D^{k} \psi(x) \right|$ taken over $1 \leq k \leq m$ and $x \in \Omega$. Clearly, $M_1 < \infty$ since $\psi \in C^{\infty}_{0} (\Omega)$. Because  $u, {^{\pm}}{\mathcal{D}}{^{\alpha}} u\in L^{p}(\Omega)$  and 
        \begin{align*}
            \dfrac{\left| C_{k}\right| \cdot |\Omega|^{k-\alpha} }{ ( k- \alpha) \Gamma(k - \alpha)} &= \dfrac{\Gamma(1 + \alpha) |\Omega|^{k-\alpha}  }{(k - \alpha) \Gamma(k+1) |\Gamma( 1- k + \alpha)|} <\infty,
        \end{align*}
        the first two terms on the right-hand side of the above inequality are finite. 

        It is left to show that the remainder term is also finite in $L^{p}(\Omega)$. To be precise, we consider the case for ${^{-}}{R}{^{\alpha}_{m}}(u,\psi)$. By its definition we get  
        \begin{align*}
            \left|{^{-}}{R}{^{\alpha}_{m}}(u,\psi)(x)\right| 
            &\leq \dfrac{M_2}{m! |\Gamma(- \alpha)|} \int_{a}^{x} \int_{y}^{x} \dfrac{|u(y)|}{(x-y)^{1+\alpha}} (x-z)^{m} \, dz dy\\
            &= \dfrac{M_2}{(m+1)! |\Gamma(- \alpha)|} {^{-}}{I}{^{ m - \alpha+1}}|u|(x)
        \end{align*}
        where $M_2 : = \sup_{x \in \Omega} \left| \psi^{(m+1)} (x) \right|$. Since $\Gamma( - \alpha) \neq 0$, hence the 
        coefficient is finite. The same estimate holds for ${^{+}}{R}{^{\alpha}_{m}}(u,\psi)$ as well. Thus,
        \begin{align*}
            \left\|{^{\pm}}{R}{^{\alpha}_{m}}(u,\psi) \right\|_{L^{p}(\Omega)} 
            &\leq \left\| \dfrac{M_3}{(m+1)! |\Gamma(- \alpha)|} {^{\pm}}{I}{^{ m - \alpha+1}}|u| \right\|_{L^{p}(\Omega)} \\ 
            &\leq \dfrac{M_2|\Omega|^{m - \alpha +1}}{(m+1)! (m- \alpha +1) |\Gamma(- \alpha)|  \Gamma(m - \alpha +1)} \left\| u \right\|_{L^{p}(\Omega)} < \infty .
        \end{align*}
        This concludes that ${^{\pm}}{\mathcal{D}}{^{\alpha}} (u \psi) \in L^{p}(\Omega)$, consequently, $u \psi \in {^{\pm}}{W}{^{\alpha,p}}(\Omega)$. 
    \end{proof}

We are now ready to state and prove the following fractional counterpart of Meyers and Serrin's  
``$H = W$" result.

    \begin{theorem}\label{H=W}
        Let $ \alpha >0$ and $1\leq p <\infty$. Then ${^{\pm}}{\overline{W}}{^{\alpha,p}}(\Omega) = {^{\pm}}{W}{^{\alpha,p}}(\Omega)$.
    \end{theorem}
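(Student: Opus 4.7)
The plan is to prove the two set inclusions; only one of them requires real work. The easy direction, ${^{\pm}}{\overline{W}}{^{\alpha,p}}(\Omega)\subseteq{^{\pm}}{W}{^{\alpha,p}}(\Omega)$, follows at once from the Banach space property established in the previous subsection: any $\|\cdot\|_{{^{\pm}}{W}{^{\alpha,p}}}$-Cauchy sequence of smooth functions already has a limit inside the complete space ${^{\pm}}{W}{^{\alpha,p}}(\Omega)$. The content of the theorem is therefore the reverse inclusion, which I would establish as a one-dimensional fractional analogue of the Meyers--Serrin ``$H = W$'' theorem: given $u\in{^{\pm}}{W}{^{\alpha,p}}(\Omega)$ and $\varepsilon>0$, I must produce $u^{(\varepsilon)}\in C^{\infty}(\Omega)$ with $\|u-u^{(\varepsilon)}\|_{{^{\pm}}{W}{^{\alpha,p}}(\Omega)}<\varepsilon$.

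To this end I would first choose a locally finite smooth partition of unity $\{\psi_{k}\}_{k\geq 1}$ on $\Omega$ whose supports are compact subsets of $\Omega$, each at a positive distance $\delta_{k}$ from $\partial\Omega$. By the preceding lemma every $v_{k}:=u\psi_{k}$ lies in ${^{\pm}}{W}{^{\alpha,p}}(\Omega)$ and is compactly supported, so letting $\widetilde{v}_{k}\in L^{p}(\R)$ be its zero extension and mollifying with a standard $\eta_{\varepsilon}$ yields $v_{k}^{(\varepsilon)}:=\eta_{\varepsilon}*\widetilde{v}_{k}\in C_{0}^{\infty}(\R)$. For $\varepsilon<\delta_{k}$ the support of $v_{k}^{(\varepsilon)}$ still lies strictly inside $\Omega$, and the key claim, which I regard as the main technical obstacle, is the commutation
\begin{equation*}
{^{\pm}}{\mathcal{D}}{^{\alpha}}\bigl(v_{k}^{(\varepsilon)}\bigr) = \eta_{\varepsilon}*\bigl({^{\pm}}{\mathcal{D}}{^{\alpha}}\widetilde{v}_{k}\bigr) \qquad \text{a.e. on } \Omega,
\end{equation*}
which I would verify by pairing both sides with an arbitrary $\varphi\in C_{0}^{\infty}(\Omega)$, applying Fubini, and reducing to Definition \ref{RWFD}; the buffer $\delta_{k}-\varepsilon>0$ is precisely what guarantees that the mollifier's nonlocal tail never reaches $\partial\Omega$, which matters because ${^{\pm}}{\mathcal{D}}{^{\alpha}}$ is itself nonlocal. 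Granted the commutation, standard $L^{p}(\R)$ mollifier convergence immediately gives $v_{k}^{(\varepsilon)}\to v_{k}$ in $\|\cdot\|_{{^{\pm}}{W}{^{\alpha,p}}(\Omega)}$ as $\varepsilon\to 0^{+}$.

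To finish, for each $k$ I would pick $\varepsilon_{k}\in(0,\delta_{k})$ so small that $\|v_{k}^{(\varepsilon_{k})}-v_{k}\|_{{^{\pm}}{W}{^{\alpha,p}}(\Omega)}<\varepsilon/2^{k}$ and form $u^{(\varepsilon)}:=\sum_{k}v_{k}^{(\varepsilon_{k})}$. Local finiteness of the cover makes the sum locally finite, placing $u^{(\varepsilon)}$ in $C^{\infty}(\Omega)$, and linearity of ${^{\pm}}{\mathcal{D}}{^{\alpha}}$ together with the geometric series $\sum_{k}\varepsilon/2^{k}=\varepsilon$ then yields the desired bound $\|u-u^{(\varepsilon)}\|_{{^{\pm}}{W}{^{\alpha,p}}(\Omega)}<\varepsilon$. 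The chief difficulty throughout lies in the nonlocality of ${^{\pm}}{\mathcal{D}}{^{\alpha}}$: unlike the integer-order Meyers--Serrin argument, one cannot blithely restrict computations to $\Omega$, and the commutation of mollification with weak fractional differentiation must be carried out with support information explicitly tracked to prevent boundary pollution.
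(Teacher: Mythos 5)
Your skeleton is the same as the paper's: the easy inclusion from completeness of ${^{\pm}}{W}{^{\alpha,p}}(\Omega)$, then a Meyers--Serrin argument with a partition of unity, the multiplication lemma to place each piece $v_k=u\psi_k$ in ${^{\pm}}{W}{^{\alpha,p}}(\Omega)$, mollification of each piece, and an $\varepsilon/2^{k}$ summation (the paper finishes with a monotone-convergence pass over the exhaustion $\Omega_k$, which your ``geometric series'' step should also do). The one point where you go beyond the paper --- which simply asserts that $\|\eta_{\varepsilon_k}*(\psi_k u)-\psi_k u\|_{{^{\pm}}{W}{^{\alpha,p}}(\Omega)}$ can be made small --- is your commutation identity, and you are right that this is the crux. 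But your proposed verification of it has a genuine gap. First, the right-hand side $\eta_{\varepsilon}*\bigl({^{\pm}}{\mathcal{D}}{^{\alpha}}\widetilde{v}_{k}\bigr)$ only makes sense if the zero extension $\widetilde{v}_{k}$ has a weak fractional derivative on $\R$ (at least on a neighborhood of $\overline{\Omega}$); that is exactly Lemma \ref{TrivialExtension}, which the paper proves \emph{after} Theorem \ref{H=W} starting from an approximating sequence in $C^{\infty}_{0}(\Omega)$, i.e.\ from the density you are in the middle of proving, so invoking it here is circular. Second, the reduction you describe does not land in Definition \ref{RWFD}: pairing $\eta_{\varepsilon}*\widetilde{v}_{k}$ with ${^{+}}{D}{^{\alpha}}\tilde{\varphi}$ over $\Omega$ and applying Fubini transfers the mollifier onto the test function, producing $\check{\eta}_{\varepsilon}*\tilde{\varphi}$ with $\check{\eta}_{\varepsilon}(z):=\eta_{\varepsilon}(-z)$, whose support exits $\Omega$ whenever $\varphi$ is supported within distance $\varepsilon$ of $\partial\Omega$. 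The buffer $\delta_{k}-\varepsilon$ controls where $v_{k}^{(\varepsilon)}$ lives, but not the test functions, against \emph{all} of which the $\Omega$-weak derivative must be identified; so ``reduce to Definition \ref{RWFD}'' fails precisely for those $\varphi$, and that is where the nonlocal pollution you worry about actually bites.

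The gap is reparable, but it needs an ingredient you have not supplied: prove the extension property for the compactly supported pieces independently of density. For instance, by the FTwFC (Theorem \ref{FTWFC}), $v_{k}=c_{\pm}^{1-\alpha}\kappa^{\alpha}_{\pm}+{^{\pm}}{I}{^{\alpha}}{^{\pm}}{\mathcal{D}}{^{\alpha}}v_{k}$ a.e.\ in $\Omega$, so ${^{\pm}}{I}{^{1-\alpha}}\widetilde{v}_{k}$ is locally absolutely continuous on $\R$: inside $\Omega$ it is a constant plus a primitive of ${^{\pm}}{\mathcal{D}}{^{\alpha}}v_{k}\in L^{p}(\Omega)$, outside $\Omega$ it is an explicit smooth tail integral because $v_{k}$ is supported at distance $\geq\delta_{k}$ from $\partial\Omega$, and the two expressions agree on the overlap. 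Its a.e.\ derivative then serves as ${^{\pm}}{\mathcal{D}}{^{\alpha}}\widetilde{v}_{k}$ on $\R$, lies in $L^{p}_{loc}$, and restricts on $\Omega$ to ${^{\pm}}{\mathcal{D}}{^{\alpha}}v_{k}$. With that in hand your commutation identity and standard $L^{p}$ mollifier convergence do give $v_{k}^{(\varepsilon)}\to v_{k}$ in ${^{\pm}}{W}{^{\alpha,p}}(\Omega)$, and the remainder of your argument matches the paper's proof. In fairness, the paper itself leaves this mollification step unjustified, so your instinct to isolate it is sound; it is the justification you sketch that does not yet close.
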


    \begin{proof}
        Because ${^{\pm}}{W}{^{\alpha,p}}(\Omega)$ are Banach spaces, then by the definition we have  ${^{\pm}}{\overline{W}}{^{\alpha,p}}(\Omega) \subseteq {^{\pm}}{W}{^{\alpha,p}}(\Omega)$. To show 
        the reverse inclusion ${^{\pm}}{\overline{W}}{^{\alpha,p}}(\Omega) \supseteq {^{\pm}}{W}{^{\alpha,p}}(\Omega)$, it suffices to show that $C^{\infty}(\Omega)$ is dense in ${^{\pm}}{W}{^{\alpha , p}}(\Omega)$.  This will be done in the same fashion as in the 
        integer order case given in \cite{Meyers} (also see \cite{Adams, Evans}). Below we shall only give a proof for 
        the case $0<\alpha< 1$ because the case $\alpha >1$ follows similarly.
        
        For $k = 1,2...$ let 
        \begin{align*}
            \Omega_{k} = \left\{x \in \Omega : |x| < k \text{ and } \text{dist}(x , \partial \Omega) > \frac{1}{k} \right\}.
        \end{align*}
        For convenience, let $\Omega_{-1} = \Omega _0 = \emptyset$. Then 
        \begin{align*}
            \Theta = \left\{ \Omega'_{k} : \Omega'_{k} = \Omega_{k+1} \setminus \overline{\Omega}_{k-1} \right\}
        \end{align*}
        is an open cover of $\Omega$. Let $\{\psi_k\}_{k =1}^{\infty}$ be a $C^{\infty}$-partition of unity of $\Omega$ subordinate to $\Theta$ so that  $\supp\left( \psi_{k} \right) \subset \Omega'_{k}$. Then $\psi_{k} \in C^{\infty}_{0} \left(\Omega'_{k} \right)$. 

        If $ 0 < \eps < \frac{1}{(k+1)(k+2)}$, let $\eta_{\eps}$ be a $C^{\infty}$ mollifier satisfying
        \begin{align*}
            \supp \left( \eta_{\eps}\right) \subset \left\{x : |x| < \frac{1}{(k+1)(k+2)}\right\}.
        \end{align*}
        Evidently,  $\eta_{\eps} * \left( \psi_{k} u \right)$ has support in $\Omega_{k+2} \setminus \overline{\Omega}_{k-2} \subset \subset \Omega$. Since $\psi_{k} u \in {^{\pm}}{W}{^{\alpha,p}}(\Omega)$ we can choose $0 < \eps_{k} < \frac{1}{(k+1)(k+2)}$ such that 
        \begin{align*}
            \left\|\eta_{\eps_{k}} * (\psi_ku) - \psi_{k} u \right\|_{{^{\pm}}{W}{^{\alpha,p}}(\Omega)} < \dfrac{\eps}{2^{k}}.
        \end{align*}
        Let $v = \sum_{k=1}^{\infty} \eta_{\eps_{k}} * (\psi_{k} u)$. On any $U \subset \subset \Omega$ only finitely
         many terms in the sum can fail to vanish. Thus, $v \in C^{\infty}(\Omega)$. For $x \in \Omega_{k}$ we have 
        \begin{align*}
            u(x) = \sum_{j =1}^{k+2} (\psi_{j} u)(x), \qquad v(x) = \sum_{j=1}^{k+2} \left(\eta_{\eps_{j}} * (\psi_{j}u)\right)(x). 
        \end{align*}
        Therefore, 
        \begin{align*}
            \|u - v\|_{{^{\pm}}{W}{^{\alpha, p}}\left(\Omega_k\right)} &= \biggl\|\sum_{j=1}^{k+2} \eta_{\eps_j}*(\psi_{j}u) - \psi_{j} u \biggr\|_{{^{\pm}}{W}{^{\alpha , p}}(\Omega_{k})} \\
            &\leq \sum_{j=1}^{k+2} \left\|\eta_{\eps_j}* (\psi_{j}u) - \psi_{j} u \right\|_{{^{\pm}}{W}{^{\alpha , p}}(\Omega)} < \eps < \frac{1}{(k+1)(k+2)}.
        \end{align*}
        Setting $k \rightarrow \infty$ and applying the Monotone Convergence theorem yields the desired result 
        $\left\|u - v\right\|_{{^{\pm}}{W}{^{\alpha,p}}(\Omega)} < \eps$. The proof is complete.
    \end{proof}
    
    One crucial difference between integer order Sobolev spaces $W^{k,p}(\Omega)$ and fractional order Sobolev spaces $^{{\pm}}{W}{^{\alpha,p}}(\Omega)$ (for $0<\alpha<1$) is that
    piecewise constant functions are not dense in the former, but are dense in the latter  
    (see the next theorem below). 
    Such a difference helps characterize a major difference between the fractional order weak derivatives and 
    integer order weak derivatives.
    
    %
    %

    \begin{theorem}
        Let $\Omega=(a,b)$,  $ \alpha >0 $ and $1 \leq p <\infty$ so that $\alpha p  <1$. Then piecewise constant functions are dense in $^{{\pm}}{W}{^{\alpha,p}}(\Omega)$.
    \end{theorem}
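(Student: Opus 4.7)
Since $p\geq 1$ and $\alpha p<1$ force $\alpha<1/p\leq 1$, only the case $0<\alpha<1$ needs attention; the ${^{+}}W^{\alpha,p}$ case will follow from the ${^{-}}W^{\alpha,p}$ case by reflection. A preliminary check is that piecewise constant functions actually lie in ${^{-}}W^{\alpha,p}(\Omega)$: a direct computation from Definition \ref{RWFD} (integrate by parts against $\varphi\in C_0^\infty(\Omega)$) gives
\[
{^{-}}\mathcal{D}^\alpha \chi_{[c,b)}(x)=\frac{(x-c)^{-\alpha}}{\Gamma(1-\alpha)}\chi_{(c,b)}(x),
\]
whose $L^p$-norm on $\Omega$ is finite exactly because $\alpha p<1$; by linearity, every piecewise constant function belongs to ${^{-}}W^{\alpha,p}(\Omega)$. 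Theorem \ref{H=W} then reduces the density claim to approximating an arbitrary $\varphi\in C^\infty([a,b])$ by piecewise constants in the ${^{-}}W^{\alpha,p}$ norm.

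For $N\in\N$ with $h=(b-a)/N$ and nodes $x_i=a+ih$, I would set $\varphi_N(x):=\varphi(x_i)$ on $[x_i,x_{i+1})$. Then $\|\varphi_N-\varphi\|_{L^p(\Omega)}=O(h)$ by uniform convergence. For the fractional derivative part, expanding $\varphi_N=\varphi(a)+\sum_{i=1}^{N-1}[\varphi(x_i)-\varphi(x_{i-1})]\chi_{[x_i,b)}$, applying ${^{-}}\mathcal{D}^\alpha$ term by term via the formula above, and comparing with the integration-by-parts representation
\[
{^{-}}\mathcal{D}^\alpha \varphi(x)=\frac{\varphi(a)}{\Gamma(1-\alpha)(x-a)^\alpha}+\frac{1}{\Gamma(1-\alpha)}\int_a^x (x-y)^{-\alpha}\varphi'(y)\,dy,
\]
the singular boundary terms cancel. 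Substituting $\varphi(x_i)-\varphi(x_{i-1})=\int_{x_{i-1}}^{x_i}\varphi'(y)\,dy$, one obtains for $x\in(x_k,x_{k+1})$ the key identity
\[
\Gamma(1-\alpha)\bigl[{^{-}}\mathcal{D}^\alpha(\varphi-\varphi_N)\bigr](x)=\int_{x_k}^x\frac{\varphi'(y)}{(x-y)^\alpha}\,dy+\sum_{i=1}^k\int_{x_{i-1}}^{x_i}\!\Bigl[\frac{1}{(x-y)^\alpha}-\frac{1}{(x-x_i)^\alpha}\Bigr]\varphi'(y)\,dy.
\]

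The main obstacle, and the only place where $\alpha p<1$ is genuinely needed beyond the preliminary check, is driving the $L^p$-norm of this difference to zero, because both terms on the right develop singularities as $x$ approaches the node $x_k$ from the right. I would split the analysis into three pieces with $M:=\|\varphi'\|_\infty$. First, the leftover $\int_{x_k}^x$ is pointwise bounded by $M h^{1-\alpha}/(1-\alpha)$, hence $O(h^{1-\alpha})$ in $L^p(\Omega)$. Second, for the $i=k$ contribution, the crude bound $|(x-y)^{-\alpha}-(x-x_k)^{-\alpha}|\leq(x-x_k)^{-\alpha}$ on $y\in(x_{k-1},x_k)$ gives the pointwise estimate $Mh(x-x_k)^{-\alpha}$ on $(x_k,x_{k+1})$; its $p$-th power integrates to $O(h^{p+1-\alpha p})$ per subinterval, and summing over the $N=(b-a)/h$ subintervals yields $O(h^{p(1-\alpha)})$, which is both finite and vanishing as $h\to 0$ precisely because $\alpha p<1$. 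Third, for $1\leq i\leq k-1$ the mean value theorem gives $|(x-y)^{-\alpha}-(x-x_i)^{-\alpha}|\leq\alpha h(x-x_i)^{-\alpha-1}$; since $(x-x_i)\geq(k-i)h$ and $\sum_{j\geq 1}j^{-\alpha-1}<\infty$, the far sum is pointwise $O(h^{1-\alpha})$ uniformly in $x$. Collecting, $\|{^{-}}\mathcal{D}^\alpha(\varphi-\varphi_N)\|_{L^p(\Omega)}=O(h^{1-\alpha})$, which together with the $L^p$ convergence completes the proof.
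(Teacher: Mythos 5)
Your proposal is correct, and it follows the same overall skeleton as the paper (reduce via Theorem \ref{H=W} to approximating a smooth function, then approximate that smooth function by piecewise constants), but the execution of the decisive estimate is genuinely different and, frankly, more careful. The paper takes an arbitrary piecewise constant $w$ that is uniformly close to the smooth approximant $v$ and disposes of $\|{^{\pm}}{D}{^{\alpha}}(v-w)\|_{L^p}$ in a single line, bounding the Riemann--Liouville derivative of $v-w$ by $\sup|v-w|$ times the kernel $(x-a)^{-\alpha}$; as written, that step hides the differentiation of the fractional integral and is not justified for an arbitrary sup-close $w$, since sup-norm smallness alone does not control a fractional derivative. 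You instead fix the nodal interpolant $\varphi_N$, exploit the explicit formula ${^{-}}{\mathcal{D}}{^{\alpha}}\chi_{[c,b)}(x)=(x-c)^{-\alpha}\chi_{(c,b)}(x)/\Gamma(1-\alpha)$ together with the Caputo-type representation \eqref{weak_Caputo} so that the boundary singularities cancel, and then estimate the resulting identity in three pieces (current cell, adjacent cell, far cells). This isolates precisely where $\alpha p<1$ enters (integrability of $(x-x_k)^{-\alpha p}$ near each node, giving the summed contribution $O(h^{p(1-\alpha)})$) and yields the quantitative rate $O(h^{1-\alpha})$; your arithmetic in all three pieces checks out. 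Your opening observation that $\alpha p<1$ with $p\ge1$ forces $0<\alpha<1$ is also cleaner than the paper's remark about the case $\alpha>1$, which is vacuous under the stated hypotheses. What your route buys is a verifiable proof of the step the paper glosses over, plus an approximation rate; what the paper's route buys is brevity, at the cost of a bound that needs exactly the kind of interpolant-specific argument you supply.

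One caveat, which your write-up shares with the paper's proof: Theorem \ref{H=W} gives density of $C^{\infty}(\Omega)$, not of $C^{\infty}([a,b])$, so your reduction to a $\varphi$ with $\|\varphi'\|_{L^{\infty}(\Omega)}<\infty$ (and likewise the paper's choice of a $v$ admitting a uniform piecewise constant approximation, i.e. a $v$ uniformly continuous on $(a,b)$) requires an additional word, e.g. a further density argument showing that smooth functions with bounded derivative up to the endpoints can be used in place of general elements of $C^{\infty}(\Omega)$. This is a shared imprecision rather than a defect specific to your argument, but it is worth flagging and patching if you write the proof out in full.
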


    \begin{proof}
        Let $\eps > 0$ and $u \in {^{\pm}}{W}{^{\alpha,p}}((a,b))$ for $0<\alpha <1$.  The case when $\alpha >1$ follows as a direct consequence of the Riemann-Liouville derivative definition and the calculations below. Since $C^{\infty}((a,b))$ is dense in ${^{\pm}}{W}{^{\alpha,p}}((a,b))$, then there exists $v \in C^{\infty}((a,b))$ such that $\|u-v\|_{{^{\pm}}{W}{^{\alpha,p}}((a,b))} < \frac{\eps}{2}$. Moreover, choose a piecewise constant 
        function $w$ such that $\sup_{x \in (a,b)} |v(x) - w(x)| < \frac{\eps}{2}\max\{|b-a|^{1-\alpha p} , |b-a|\} =:M$.
        Then 
        \begin{align*}
            \|u - w\|_{L^{p}((a,b))}^{p} &\leq \|u-v\|_{L^{p}((a,b))}^{p} + \|v - w\|_{L^{p}((a,b))}^{p} \\ 
            &< \dfrac{\eps}{2} + \int_{a}^{b} |v-w|^{p}\,dx 
            <\dfrac{\eps}{2} + \left( \dfrac{\eps}{M}\right)^{p} |b-a| 
            \leq  \eps .
        \end{align*}
        Similarly,  on noting that ${^{\pm}}{\mathcal{D}}{^{\alpha}}w$ exists and belongs to $L^p((a,b))$, 
        we have
        \begin{align*}
            \left\|{^{\pm}}{\mathcal{D}}{^{\alpha}} u - {^{\pm}}{\mathcal{D}}{^{\alpha}}w \right\|_{L^{p}((a,b))}^{p} &\leq \left\|{^{\pm}}{\mathcal{D}}{^{\alpha}} u - {^{\pm}}{\mathcal{D}}{^{\alpha}}v \right\|_{L^{p}((a,b))}^{p} + \left\|{^{\pm}}{\mathcal{D}}{^{\alpha}} v - {^{\pm}}{\mathcal{D}}{^{\alpha}}w \right\|_{L^{p}((a,b))}^{p}\\
            &< \dfrac{\eps}{2} + \left\|{^{\pm}}{D}{^{\alpha}} v - {^{\pm}}{D}{^{\alpha}}w \right\|_{L^{p}((a,b))}^{p},
        \end{align*}
        and the last term can be bounded as follows 
        \begin{align*}
            \left\|{^{\pm}}{D}{^{\alpha}} v - {^{\pm}}{D}{^{\alpha}}w \right\|_{L^{p}((a,b))}^{p}&= \int_{a}^{b} \left| \dfrac{d}{dx} \int_{a}^{x} \dfrac{v(y) - w(y)}{(x-y)^{\alpha}}\,dy \right|^{p}\,dx\\
            &< \left(\dfrac{\eps}{2 M}\right)^{p} \int_{a}^{b} \dfrac{dx}{(x-a)^{\alpha p}} 
            <\dfrac{\eps}{2}.
        \end{align*}
        This proves the assertion. 
    \end{proof}

    \subsubsection{\bf Infinite Domain Case: $\Omega=\R$}\label{sec-4.1.2}
    The approximation of functions in the fractional Sobolev functions on $\R$ is much easier than the case when $\Omega=(a,b)$. In this case, all of the legwork has already been done in the characterization theorem for weak derivatives  (see Theorem \ref{characterization}).
    
%

    \begin{theorem}
        Let $\alpha >0$ and $1 \leq p <\infty$. Then $C^{\infty}_{0}(\R)$ is dense in ${^{\pm}}{W}{^{\alpha,p}}(\R)$. Hence, ${^{\pm}}{\overline{W}}{^{\alpha,p}}(\R) =
        {^{\pm}}{\overline{W}}{^{\alpha,p}_{0}}(\R)= {^{\pm}}{W}{^{\alpha,p}}(\R)$.
    \end{theorem}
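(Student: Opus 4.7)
The plan is a two-step ``mollify then truncate'' argument, with Theorem \ref{characterization} supplying the crucial commutation between mollification and weak fractional differentiation on $\R$. Since $C^{\infty}_{0}(\R) \subset C^{\infty}(\R) \subset {^{\pm}}{W}{^{\alpha,p}}(\R)$, the three-way equality ${^{\pm}}{\overline{W}}{^{\alpha,p}}(\R) = {^{\pm}}{\overline{W}}{^{\alpha,p}_{0}}(\R) = {^{\pm}}{W}{^{\alpha,p}}(\R)$ follows once $C^{\infty}_{0}(\R)$ is shown to be dense in ${^{\pm}}{W}{^{\alpha,p}}(\R)$.

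\emph{Step 1 (smooth approximation).} Let $\eta_{\eps}$ be a standard $C^{\infty}$ mollifier on $\R$ and set $u_{\eps} := \eta_{\eps} * u$. Then $u_{\eps} \in C^{\infty}(\R)$ and $u_{\eps} \to u$ in $L^p(\R)$ by standard mollifier theory. The key identity is
\[
{^{\pm}}{\mathcal{D}}{^{\alpha}} u_{\eps} = \eta_{\eps} * \bigl({^{\pm}}{\mathcal{D}}{^{\alpha}} u\bigr),
\]
obtained from Definition \ref{RWFD} by Fubini, using that $\eta_{\eps} * \varphi \in C^{\infty}_{0}(\R)$ whenever $\varphi \in C^{\infty}_{0}(\R)$. (Alternatively, Theorem \ref{characterization} supplies a $C^{\infty}$ sequence $u_j \to u$ for which convolution and ${^{\pm}}{\mathcal{D}}{^{\alpha}}$ commute classically, and one passes to the limit.) Standard mollifier theory then yields ${^{\pm}}{\mathcal{D}}{^{\alpha}} u_{\eps} \to {^{\pm}}{\mathcal{D}}{^{\alpha}} u$ in $L^p(\R)$, so $u_{\eps} \to u$ in ${^{\pm}}{W}{^{\alpha,p}}(\R)$.

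\emph{Step 2 (truncation).} Pick $\zeta \in C^{\infty}_{0}(\R)$ with $\zeta \equiv 1$ on $[-1,1]$ and $\mathrm{supp}(\zeta) \subset [-2,2]$, and set $\zeta_R(x) := \zeta(x/R)$. Then $w_R := \zeta_R u_{\eps} \in C^{\infty}_{0}(\R)$ and $w_R \to u_{\eps}$ in $L^p(\R)$ by dominated convergence. For the fractional derivative, illustrating with ${^{-}}{\mathcal{D}}{^{\alpha}}$ and $0 < \alpha < 1$ (the case $\alpha \geq 1$ reduces to this via $\alpha = [\alpha] + \sigma$ and the classical Leibniz rule applied to $\mathcal{D}^{[\alpha]}$), the Marchaud-type representation on smooth functions yields
\[
{^{-}}{\mathcal{D}}{^{\alpha}}(\zeta_R u_{\eps})(x) - \zeta_R(x)\, {^{-}}{\mathcal{D}}{^{\alpha}} u_{\eps}(x) = \tfrac{\alpha}{\Gamma(1-\alpha)} \int_0^{\infty} u_{\eps}(x-t) \frac{\zeta_R(x) - \zeta_R(x-t)}{t^{1+\alpha}} \, dt =: E_R(x).
\]
The leading piece $\zeta_R \cdot {^{-}}{\mathcal{D}}{^{\alpha}} u_{\eps}$ converges to ${^{-}}{\mathcal{D}}{^{\alpha}} u_{\eps}$ in $L^p(\R)$ by dominated convergence. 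For $E_R$, splitting the $t$-integral at $t=1$ and using $|\zeta_R(x) - \zeta_R(x-t)| \leq (\|\zeta'\|_{\infty}/R)\, t$ for $t \leq 1$ and $|\zeta_R(x) - \zeta_R(x-t)| \leq 2\|\zeta\|_{\infty}$ for $t \geq 1$ yields a uniform-in-$R$ pointwise bound on $|E_R(x)|$ by the convolution of $|u_{\eps}|$ with a fixed $L^1(\R)$ kernel. Since $\zeta_R(x) - \zeta_R(x-t) \to 0$ pointwise as $R \to \infty$, a second application of dominated convergence gives $\|E_R\|_{L^p(\R)} \to 0$, which completes Step 2.

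The main obstacle is Step 2. Unlike the integer-order case, where $\mathcal{D}(\zeta_R u_{\eps}) - \zeta_R \mathcal{D} u_{\eps}$ is supported where $\zeta_R'$ is nonzero, the nonlocal character of ${^{\pm}}{\mathcal{D}}{^{\alpha}}$ produces a correction $E_R$ that samples $u_{\eps}$ at arbitrarily distant points. Producing the uniform $L^p$ dominating function---by separating the near-zero and far-away parts of the Marchaud integral---is the analytical heart of the argument and is what makes the $\R$ case require genuine work beyond a naive invocation of Theorem \ref{characterization}.
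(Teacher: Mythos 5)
Your argument is correct in outline, but it takes a genuinely different and much more self-contained route than the paper. The paper's proof is essentially a one-line appeal to the characterization of weak fractional derivatives by smooth functions (Theorem \ref{characterization}, invoked in its stronger form from \cite{Feng_Sutton}, with $C^{\infty}_{0}(\R)$ approximants and $L^p$ convergence of both the functions and their fractional derivatives): for $u\in{^{\pm}}{W}{^{\alpha,p}}(\R)$ such a sequence is simply ``recalled,'' and the proof is over. You instead re-derive that approximation property from scratch: mollification, using the commutation ${^{\pm}}{\mathcal{D}}{^{\alpha}}(\eta_{\eps}*u)=\eta_{\eps}*{^{\pm}}{\mathcal{D}}{^{\alpha}}u$ (your Fubini argument is valid because on $\R$ the classical operators ${^{\mp}}{D}{^{\alpha}}$ commute with convolution against a test function), followed by truncation with a Marchaud commutator estimate. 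What your route buys is independence from the unproved-in-this-paper strengthening of Theorem \ref{characterization} (which, as stated here, only gives $C^{\infty}$ approximants with $L^1$/$L^1_{loc}$ convergence); what it costs is Step 2, which the paper never has to confront. One step needs tightening: you apply the Marchaud representation to $u_{\eps}$, which is smooth but not compactly supported and not known a priori to admit that representation --- the paper derives the Marchaud form (Section \ref{sec-4.6.2}) only for functions in $C^{\infty}_{0}(\R)$, and ${^{-}}{\mathcal{D}}{^{\alpha}}u_{\eps}$ is defined here only as a weak derivative. A clean repair is to prove the commutator identity for $E_R$ weakly: test $\zeta_R u_{\eps}$ against ${^{+}}{D}{^{\alpha}}\varphi$ for $\varphi\in C^{\infty}_{0}(\R)$, subtract $\int \zeta_R\,({^{-}}{\mathcal{D}}{^{\alpha}}u_{\eps})\,\varphi\,dx$, and justify the interchange of integrals directly (absolute convergence holds because $\zeta_R(x)-\zeta_R(x-t)$ is $O(t/R)$ near $t=0$ and bounded for large $t$); alternatively, prove the identity first for $C^{\infty}_{0}$ functions and pass to the limit along your Step 1 approximants. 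With that fix, your dominated-convergence treatment of $E_R$ --- the $O(1/R)$ near-field piece and the fixed $L^1$-kernel far-field piece dominating in $L^p$ --- is sound, and the truncation step closes the proof.
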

    
    \begin{proof}
        We only give a proof for the case $0<\alpha<1$ since the case $\alpha>1$ follows similarly. Let $u \in {^{\pm}}{W}{^{\alpha,p}}(\R)$. Recall that there exists a sequence $\left\{u_j \right\}_{j = 1}^{\infty} \subset C^{\infty}_{0}(\R)$ such that $u_j \rightarrow u$ in $L^{p}(\R)$ and ${^{\pm}}{\mathcal{D}}{^{\alpha}} u_j \rightarrow {^{\pm}}{\mathcal{D}}{^{\alpha}} u$ in $L^{p}(\R)$ because ${^{\pm}}{\mathcal{D}}{^{\alpha}} u \in L^{p}(\R)$. The proof is complete. 
    \end{proof}

    \subsection{Extension Operators}\label{sec-4.2}
    In this subsection we address the issue of extending Sobolev functions from a finite domain $\Omega=(a,b)$ to 
    the real line $\R$. As we shall see below, constructing such an extension operator in ${^{\pm}}{W}{^{\alpha,p}}(\Omega)$ requires a different process and added conditions relative to the integer order case. Recall that spaces ${^{\pm}}{W}{^{\alpha,p}}(\Omega)$ differ greatly from 
    integer Sobolev spaces due to the following characteristics: {\em 
    (i) ${^{\pm}}{W}{^{\alpha,p}}$ is direction-dependent  and domain-dependent;
    (ii) fractionally differentiable functions inherit singular kernel functions;
    (iii) continuity is not a necessary condition for fractional differentiability;
    (iv) compact support is a desirable property to dampen the singular effect of the kernel functions and pollution.}
    Moreover, we also note that due to the pollution effect 
    of fractional derivatives,  zero function values may result in nonzero 
    contribution to fractional derivatives, 
    controlling the pollution contributions is also the key in the subsequent analysis.  
   
 \subsubsection{\bf Extensions of Compactly Supported Functions}\label{sec-4.2.1}
    We first consider the easy case of compactly supported functions. In this case, we show that 
    the trivial extension will do the job. 
    
    \begin{lemma}\label{TrivialExtension}
        Let $\Omega=(a,b)$, $\alpha>0$, and $1 \leq p < \infty$. If $u \in {^{\pm}}{W}{^{\alpha,p}}(\Omega)$ and $K:=\supp(u) \subset\subset\Omega$, then the trivial extension $\tilde{u}$ belongs to ${^{\pm}}{W}{^{\alpha,p}}(\R)$ and there exists 
        $C = C(\alpha, p,K)>0$ such that
        \begin{align*}
            \|\tilde{u}\|_{{^{\pm}}{W}{^{\alpha,p}}(\R)} \leq C \left\|u\right\|_{{^{\pm}}{W}{^{\alpha, p}}(\Omega)}.
        \end{align*}
    \end{lemma}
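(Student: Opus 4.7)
The plan is to reduce the claim to the case of a smooth test function with compact support strictly inside $\Omega$ and then bound the trivial extension by an explicit computation with the Riemann--Liouville formula. I argue the left case ${^{-}}{W}{^{\alpha,p}}$ with $0<\alpha<1$; the right case is symmetric, and the case $\alpha>1$ reduces to the fractional part via the semigroup identity \eqref{FSS_equiv}.

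Because $K\subset\subset(a,b)$, I choose $a<a''<a'<b'<b''<b$ with $K\subset[a',b']$ and a cutoff $\psi\in C_0^\infty(\Omega)$ satisfying $\psi\equiv 1$ on $[a',b']$ and $\supp\psi\subset K':=[a'',b'']$, so that $u=\psi u$. By Theorem \ref{H=W} there exist $u_n\in C^\infty(\Omega)$ with $u_n\to u$ in ${^{-}}{W}{^{\alpha,p}}(\Omega)$, and the preceding product lemma shows that multiplication by $\psi$ is a bounded linear operator on ${^{-}}{W}{^{\alpha,p}}(\Omega)$; hence $\varphi_n:=\psi u_n\in C_0^\infty(\Omega)$ is uniformly supported in $K'$ and $\varphi_n\to u$ in ${^{-}}{W}{^{\alpha,p}}(\Omega)$. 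It therefore suffices to establish
\begin{align*}
\|\tilde\varphi\|_{{^{-}}{W}{^{\alpha,p}}(\R)} \le C(\alpha,p,K')\,\|\varphi\|_{{^{-}}{W}{^{\alpha,p}}(\Omega)}
\end{align*}
for every $\varphi\in C_0^\infty(\Omega)$ with $\supp\varphi\subset K'$: applying the bound to the Cauchy differences $\varphi_n-\varphi_m$ makes $\{\tilde\varphi_n\}$ Cauchy in ${^{-}}{W}{^{\alpha,p}}(\R)$; its limit must equal $\tilde u$ a.e.\ by $L^p(\R)$ convergence, and passing to the limit in the bound yields the claim.

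To prove the key estimate, observe that $\tilde\varphi\in C_0^\infty(\R)$, so its weak fractional derivative on $\R$ equals the classical Riemann--Liouville derivative. Split $\R=(-\infty,a'')\cup[a'',b]\cup(b,\infty)$. On $(-\infty,a'')$ both $\tilde\varphi$ and the defining integral vanish, giving ${^{-}}{D}{^{\alpha}}\tilde\varphi\equiv 0$. On $(a,b)$ the fact that $\tilde\varphi=0$ on $(-\infty,a]$ means $\int_{-\infty}^{x}\tilde\varphi(y)(x-y)^{-\alpha}\,dy=\int_{a}^{x}\varphi(y)(x-y)^{-\alpha}\,dy$, so ${^{-}}{D}{^{\alpha}}\tilde\varphi={^{-}}{D}{^{\alpha}}\varphi$ pointwise. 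On $(b,\infty)$ the uniform gap $x-y\ge x-b''>0$ for $y\in K'$ permits differentiation under the integral sign,
\begin{align*}
{^{-}}{D}{^{\alpha}}\tilde\varphi(x) = -\frac{\alpha}{\Gamma(1-\alpha)}\int_{a''}^{b''}\frac{\varphi(y)}{(x-y)^{1+\alpha}}\,dy,
\end{align*}
and H\"older's inequality together with $x-y\ge x-b''$ yields $|{^{-}}{D}{^{\alpha}}\tilde\varphi(x)|\le C(\alpha,K')\,\|\varphi\|_{L^p(\Omega)}\,(x-b'')^{-(1+\alpha)}$, whose $p$-th power is integrable on $(b,\infty)$ because $(1+\alpha)p>1$. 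Combining the three pieces with the trivial identity $\|\tilde\varphi\|_{L^p(\R)}=\|\varphi\|_{L^p(\Omega)}$ proves the estimate.

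The main obstacle, absent in the integer-order theory, is this \emph{pollution tail} on $(b,\infty)$: although $\tilde\varphi$ itself vanishes there, ${^{-}}{D}{^{\alpha}}\tilde\varphi$ does not, and one must show its $L^p$ norm is controlled by $\|\varphi\|_{L^p(\Omega)}$. The strict separation $b''<b$ built into the reduction step is essential because it supplies the uniform decay $(x-b'')^{-(1+\alpha)}$; this is also why the constant $C$ must depend on $K$ through $K'$, and why the hypothesis $K\subset\subset\Omega$, rather than merely $K\subset\Omega$, cannot be relaxed by this method.
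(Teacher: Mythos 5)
Your proof is correct and follows essentially the same route as the paper's: approximate $u$ by smooth compactly supported functions, note that the extension's left derivative agrees with the original on $(a,b)$ while producing an explicit pollution tail on $(b,\infty)$ that is controlled by the gap between the support and $b$, and then pass to the limit by a Cauchy-sequence argument in ${^{\pm}}{W}{^{\alpha,p}}(\R)$. The only notable difference is that you explicitly construct the uniformly supported $C_0^\infty(\Omega)$ approximants via a cutoff together with the product lemma and Theorem \ref{H=W}, whereas the paper simply posits an approximating sequence in $C_0^\infty(\Omega)$ with supports in a fixed $(c,d)\subset\subset(a,b)$; your version fills in that implicit step.
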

 
    \begin{proof}
        Let $\{u_j\}_{j = 1}^{\infty}  \subset C^{\infty}_{0}(\Omega)$ be an approximating sequence of $u$ and define
        \begin{align*}
            \tilde{u}_j(x):= \begin{cases}
             u_j(x) &\text{if } x\in \Omega, \\ 
             0 &\text{if } x \in \R \setminus \Omega .
             \end{cases}
         \end{align*}
         Clearly, $\|\tilde{u}_j\|_{L^{p}(\R)} = \|u_j\|_{L^{p}(\Omega)} <\infty$. Let $\varphi \in C^{\infty}_{0}(\R)$ and by the integration by parts formula for classical fractional derivatives (cf. \cite[Theorem 2.5]{Feng_Sutton}), we get 
        \begin{align*}
            \int_{\R} \tilde{u}_j {^{\mp}}{D}{^{\alpha}}\varphi  &= \int_{\R} {^{\pm}}{D}{^{\alpha}} \tilde{u}_j \varphi .
        \end{align*}
        For clarity, let $\supp(u_j) \subset K\subseteq (c,d) \subset\subset (a,b)$ and we look at the left derivative.
        \begin{align*}
            \left\|{^{-}}{\mathcal{D}}{^{\alpha}} \tilde{u}_j \right\|_{L^{p}(\R)}^{p} 
            &= \left\| {^{-}}{D}{^{\alpha}} u_j \right\|_{L^{p}((a,b))}^{p} + \left\|L(u_j)\right\|_{L^{p}((b,\infty))}^{p} \\ 
            &= \left\|{^{-}}{\mathcal{D}}{^{\alpha}}u_j \right\|_{L^{p}((a,b))}^{p} + \int_{b}^{\infty} \biggl| \int_{c}^{d} \dfrac{u_j(y)}{(x-y)^{1+\alpha}} \,dy \biggr|^{p}\,dx \\ 
            &\leq \left\|{^{-}}{\mathcal{D}}{^{\alpha}}u_j \right\|_{L^{p}((a,b))}^{p} + \|u_j\|_{L^{p}((a,b))}^{p}\dfrac{(b-a)^{\frac{p}{q}}}{(d-b)^{p(1+\alpha) - 1}}.
        \end{align*}
        Then there exists $C= C(\alpha,p,K)$ such that 
        \begin{align*}
            \left\|\tilde{u}_j\right\|_{{^{\pm}}{W}{^{\alpha,p}}(\R)} \leq C \left\|u_j\right\|_{{^{\pm}}{W}{^{\alpha,p}}(\Omega)}.
        \end{align*}
        Now, we need to show that the appropriate limits are realized. By construction, $\left\|u_j \right\|_{{^{\pm}}{W}{^{\alpha,p}}(\Omega)} \rightarrow \|u\|_{{^{\pm}}{W}{^{\alpha,p}}(\Omega)}.$ Therefore, $\lim_{j\rightarrow \infty} \left\|\tilde{u}_j\right\|_{{^{\pm}}{W}{^{\alpha,p}}(\R)} < \infty$. Let $\eps > 0$ and choose sufficiently large $m$ and $n$ so that
        \begin{align*}
            \left\|\tilde{u}_m - \tilde{u}_n\right\|_{{^{\pm}}{W}{^{\alpha,p}}(\R)} \leq C \left\|u_n - u_m \right\|_{{^{\pm}}{W}{^{\alpha,p}}(\Omega)} < \eps.
        \end{align*}
        Hence, $\left\{\tilde{u}_j\right\}_{j=1}^{\infty}$ is a Cauchy sequence in ${^{\pm}}{W}{^{\alpha,p}}(\R)$. Since ${^{\pm}}{W}{^{\alpha,p}}(\R)$ is complete, there exists $v\in {^{\pm}}{W}{^{\alpha,p}}(\R)$ such that $u_j \rightarrow v$ in ${^{\pm}}{W}{^{\alpha,p}}(\R)$. We claim finally that $v = \tilde{u}$ almost everywhere. For sufficiently large $j$, we have that
        \begin{align*}
            \left\|\tilde{u} - v \right\|_{L^{p}(\R)}&\leq \left\|\tilde{u} - \tilde{u}_j\right\|_{L^{p}(\R)}+\left\|\tilde{u}_j - v\right\|_{L^{p}(\R)}\\
            &= \left\|u - u_j \right\|_{L^{p}(\Omega)} + \left\|\tilde{u}_j - v\right\|_{L^{p}(\R)} 
            < \eps. 
        \end{align*}
        Therefore, $v= \tilde{u}$ almost everywhere in $\R$. 
        This concludes that the trivial extension $\tilde{u}$ satisfies the desired properties on compactly supported functions.
    \end{proof}
    
    \begin{corollary}
         The same result holds for any $u \in  {W}^{\alpha,p}(\Omega)$ with the same construction. 
    \end{corollary}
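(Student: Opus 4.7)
The plan is to invoke Lemma \ref{TrivialExtension} separately in each of the two one-sided spaces whose intersection defines $W^{\alpha,p}(\Omega)$. First I would note that by definition \eqref{SFSS}, any $u \in W^{\alpha,p}(\Omega)$ with $K := \supp(u) \subset\subset \Omega$ lies simultaneously in ${^{-}}{W}{^{\alpha,p}}(\Omega)$ and ${^{+}}{W}{^{\alpha,p}}(\Omega)$. Applying Lemma \ref{TrivialExtension} once for the minus sign and once for the plus sign — using the very same construction, namely the trivial extension $\tilde{u}$ and a common approximating sequence $\{u_j\} \subset C^{\infty}_{0}(\Omega)$ — yields that $\tilde{u}$ belongs to both ${^{-}}{W}{^{\alpha,p}}(\R)$ and ${^{+}}{W}{^{\alpha,p}}(\R)$, and hence to their intersection $W^{\alpha,p}(\R)$.

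Next I would assemble the norm estimate. Let $C_{-}, C_{+} > 0$ be the constants produced by Lemma \ref{TrivialExtension} in the two one-sided cases, each depending only on $\alpha$, $p$, and $K$. Using the definition \eqref{SFSS_norm}, I would compute
\begin{align*}
\|\tilde{u}\|_{W^{\alpha,p}(\R)}^{p} &= \|\tilde{u}\|_{{^{-}}{W}{^{\alpha,p}}(\R)}^{p} + \|\tilde{u}\|_{{^{+}}{W}{^{\alpha,p}}(\R)}^{p} \\
&\leq C_{-}^{p}\, \|u\|_{{^{-}}{W}{^{\alpha,p}}(\Omega)}^{p} + C_{+}^{p}\, \|u\|_{{^{+}}{W}{^{\alpha,p}}(\Omega)}^{p} \\
&\leq C^{p}\, \|u\|_{W^{\alpha,p}(\Omega)}^{p},
\end{align*}
where $C := \max\{C_{-}, C_{+}\}$. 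The case $p = \infty$ is handled identically, replacing the $p$th-power sums by maxima. Taking $p$th roots gives the desired bound $\|\tilde{u}\|_{W^{\alpha,p}(\R)} \leq C \|u\|_{W^{\alpha,p}(\Omega)}$.

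There is essentially no obstacle here: because the trivial extension does not depend on the choice of sign and the one-sided weak derivatives are treated independently in the symmetric norm, the statement is a direct additive combination of two applications of Lemma \ref{TrivialExtension}. The only minor point requiring mention is that the \emph{same} approximating sequence $\{u_j\} \subset C^{\infty}_{0}(\Omega)$ can be used in both invocations, so the resulting Cauchy sequence $\{\tilde{u}_j\}$ in $W^{\alpha,p}(\R)$ converges to a single limit which must coincide almost everywhere with $\tilde{u}$, exactly as in the proof of the lemma.
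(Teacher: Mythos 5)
Your proposal is correct and is exactly the argument the paper intends: the paper omits a proof of the corollary precisely because, as you note, one simply applies Lemma \ref{TrivialExtension} to each of the two one-sided spaces whose intersection defines $W^{\alpha,p}(\Omega)$ and combines the norm bounds via \eqref{SFSS_norm}. (Your remark on $p=\infty$ is harmless but unnecessary, since the lemma is stated only for $1\leq p<\infty$.)
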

    

    \subsubsection{\bf Interior Extensions}\label{sec-4.2.2}
    For any function $u \in {^{\pm}}{W}{^{\alpha,p}}(\Omega)$ and $\Omega' \subset\subset \Omega$, 
    we first rearrange  $u$ in  $\Omega\setminus \Omega'$ so that the rearranged function $u^*$ has compact support in $\Omega$ and coincide with $u$ in $\Omega'$.  With the help of such a rearrangement and the
     extension  result of the previous subsection we then can extend any function $u$ in 
     $ {^{\pm}}{W}{^{\alpha,p}}(\Omega)$ to a function in ${^{\pm}}{W}{^{\alpha,p}}(\R)$ with some 
     preferred properties. We refer to such an extension as an {\em interior extension} of $u$. 
    
    \begin{lemma}\label{InteriorExtesionLemma}
    	Let $\Omega=(a,b)$ and $\alpha>0$. 
        For each $\Omega' \subset \subset  \Omega$, there exists a compact set $K\subset \Omega$ 
        and a constant $C=C(\alpha, K) >0$, such that for every $u \in {^{\pm}}{W}{^{\alpha,p}}(\Omega)$, 
        there exists $u^* \in {^{\pm}}{W}{^{\alpha,p}}(\Omega)$ satisfying 
        \begin{enumerate}
            \item[{\rm (i)}] $u^* = u$   a.e. in $\Omega'$,
            \item[{\rm (ii)}] $\supp(u^*) \subseteq K$,
            \item[{\rm (iii)}] $\|u^*\|_{{^{\pm}}{W}{^{\alpha,p}}(\Omega)} 
            \leq C \|u\|_{{^{\pm}}{W}{^{\alpha,p}}(\Omega)}.$
        \end{enumerate}
    \end{lemma}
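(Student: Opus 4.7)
The plan is to produce $u^*$ by multiplying $u$ with a smooth cutoff function. Fix $\Omega' \subset\subset \Omega$ and choose an intermediate open set $U$ with $\Omega' \subset\subset U \subset\subset \Omega$. Pick $\eta \in C_0^\infty(\Omega)$ satisfying $0 \le \eta \le 1$, $\eta \equiv 1$ on $\Omega'$, and $\mbox{supp}(\eta) \subset \overline{U}$. Set $K := \mbox{supp}(\eta)$, which is a compact subset of $\Omega$, and define $u^* := \eta u$. Conditions (i) and (ii) are then immediate from the construction: $u^* = u$ a.e.\ on $\Omega'$ because $\eta \equiv 1$ there, while $\mbox{supp}(u^*) \subseteq \mbox{supp}(\eta) = K$.

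For (iii), I would invoke the preceding lemma, which already guarantees $\eta u \in {^{\pm}}{W}{^{\alpha,p}}(\Omega)$. Tracking constants in its proof (rather than merely finiteness), the trivial bound $\|\eta u\|_{L^p(\Omega)} \le \|\eta\|_{L^\infty(\Omega)}\|u\|_{L^p(\Omega)}$ combined with the product-rule estimate
\[
\|{^{\pm}}{\mathcal{D}}{^{\alpha}}(\eta u)\|_{L^p(\Omega)} \le M_0 \|{^{\pm}}{\mathcal{D}}{^{\alpha}} u\|_{L^p(\Omega)} + C_\eta \|u\|_{L^p(\Omega)},
\]
where $M_0 = \|\eta\|_{L^\infty(\Omega)}$ and $C_\eta$ depends only on finitely many derivatives of $\eta$, on $|\Omega|$, and on $\alpha$, yields $\|u^*\|_{{^{\pm}}{W}{^{\alpha,p}}(\Omega)} \le C \|u\|_{{^{\pm}}{W}{^{\alpha,p}}(\Omega)}$ for some $C = C(\alpha, K)$.

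The main subtlety is not producing $u^*$, which is a routine cutoff construction, but ensuring the constant $C$ depends only on $\alpha$ and $K$ and is independent of $u$. This is already handled inside the preceding lemma: its treatment of the remainder term ${^{\pm}}{R}{^{\alpha}_{m}}(u,\eta)$ produces a bound of the form $C_\eta \|u\|_{L^p(\Omega)}$ with $C_\eta$ controlled by $\|\eta^{(m+1)}\|_{L^\infty(\Omega)}$, $|\Omega|$, and $\Gamma$-factors. Once $\eta$ is fixed in terms of $\Omega'$ and $K$, every constant in the estimate is independent of $u$. For $\alpha > 1$, writing $\alpha = m + \sigma$ with $m = [\alpha]$ and $\sigma = \alpha - m$ reduces the problem, via the semigroup characterization \eqref{FSS_equiv}--\eqref{FSS_norm_equiv}, to combining the integer-order Leibniz rule with the $0 < \sigma < 1$ case already treated; no new ingredients beyond bookkeeping are required.
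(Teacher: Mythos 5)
Your proof is correct and follows essentially the same route as the paper's: multiply $u$ by a smooth cutoff equal to $1$ on $\Omega'$ with compact support in $\Omega$ (the paper builds this cutoff as a sum of a partition of unity subordinate to a cover of $\overline{\Omega'}$, a cosmetic difference only), then bound $\|{^{\pm}}{\mathcal{D}}{^{\alpha}}(\eta u)\|_{L^p(\Omega)}$ via the fractional product rule exactly as in the preceding multiplier lemma, with constants depending only on $\eta$ (hence on $\alpha$ and $K$). The reduction of $\alpha>1$ to the $0<\alpha<1$ case is also how the paper handles it.
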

    
    \begin{proof}
    	Again, we only give a proof for the case $0<\alpha<1$ because the case $\alpha>1$ can be proved similarly.
        Choose $\Omega' \subset \subset \Omega$. Let $\{B_{i}\}_{i=1}^{N}$ be a cover of $\overline{\Omega'}$ 
        with a subordinate partition of unity $\{ \psi_{i}\}_{i=1}^{N} \subset C^{\infty}(\Omega)$ in the sense that $\supp(\psi_i)\subset B_i$ for $i=1,2,\cdots, N$. Define 
        $u^*:\Omega \rightarrow \R$ by $u^* := u \psi$ with $\psi:= \sum_{i=1}^{N} \psi_{i}$. 
        Note that $u^* = u$ almost 
        everywhere on $\Omega'$ and $\supp(u^*) \subseteq K:=\cup \overline{B}_i$. We need to show 
        that $u^* \in {^{\pm}}{W}{^{\alpha,p}}(\Omega)$. Of course, 
        \begin{align*}
            \|u^*\|_{L^{p}(\Omega)} &= \bigl\| u \psi  \bigr\|_{L^{p}(\Omega)} 
            = \bigl\| u \psi \bigr\|_{L^p(K)} \leq \| u \|_{L^{p}(K)} \leq \|u\|_{L^{p}(\Omega)}.
        \end{align*}
        Next, by the product rule for weak fractional derivatives \cite[Theorem 4.3]{Feng_Sutton}, 
        we get 
        \begin{align*}
            {^{\pm}}{\mathcal{D}}{^{\alpha}} u^* &= {^{\pm}}{\mathcal{D}}{^{\alpha}} u \cdot 
           \psi + \sum_{k=1}^{m} C(k,\alpha) {^{\pm}}{I}{^{k-\alpha}} u  D^{k} \psi  + {^{\pm}}{R}{^{\alpha}_{m}} (u,\psi).
        \end{align*}
        It follows by direct calculations that 
        \begin{align*}
            \|{^{\pm}}{\mathcal{D}}{^{\alpha}} u^*\|_{L^{p}(\Omega)} \leq C \|u\|_{{^{\pm}}{W}{^{\alpha,p}}(\Omega)}.
        \end{align*}
        Hence, $u^* \in {^{\pm}}{W}{^{\alpha,p}}(\Omega)$ and there exists $C >0$ 
        such that assertion (iii) holds. The proof is complete. 
    \end{proof}

Now, we are ready to state the following interior extension theorem. 

    \begin{theorem}\label{InteriorExtension}
    	Let $\Omega=(a,b)$ and $\alpha>0$. 
        For each $\Omega' \subset\subset \Omega$,   there exist  a compact set $K\subset \Omega$  and  a constant $C = C(\alpha ,p, K) >0$ so that for every $u \in {^{\pm}}{W}{^{\alpha,p}}(\Omega)$,  
        there exists a mapping $E: {^{\pm}}{W}{^{\alpha,p}}(\Omega) \rightarrow {^{\pm}}{W}{^{\alpha,p}}(\R)$ so that 
        \begin{enumerate}
            \item[{\rm (i)}]  ${Eu} = u$  a.e. in $\Omega'$,
            \item[{\rm (ii)}] $\supp(E{u})\subseteq K,$
            \item[{\rm (iii)}] $\|E{u}\|_{{^{\pm}}{W}{^{\alpha,p}}(\R)} \leq C \|u\|_{{^{\pm}}{W}{^{\alpha,p}}(\Omega)}.$
        \end{enumerate}
        We call $Eu$ an (interior) extension of $u$ to $\R$. 
        
    \end{theorem}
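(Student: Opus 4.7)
The plan is to obtain $E$ as the composition of the two constructions already set up: first apply the rearrangement of Lemma \ref{InteriorExtesionLemma} to push the support of $u$ strictly inside $\Omega$ while preserving its values on $\Omega'$, and then apply the trivial extension of Lemma \ref{TrivialExtension} to push that compactly supported function out to all of $\mathbb{R}$. Concretely, I would fix $\Omega'\subset\subset\Omega$, invoke Lemma \ref{InteriorExtesionLemma} to obtain a compact set $K\subset\Omega$ (independent of $u$) and $u^*\in {^{\pm}}{W}{^{\alpha,p}}(\Omega)$ with $u^*=u$ a.e.\ on $\Omega'$, $\supp(u^*)\subseteq K$, and $\|u^*\|_{{^{\pm}}{W}{^{\alpha,p}}(\Omega)}\le C_1 \|u\|_{{^{\pm}}{W}{^{\alpha,p}}(\Omega)}$, where $C_1=C_1(\alpha,K)$.

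Next, because $K\subset\subset\Omega$, I would apply Lemma \ref{TrivialExtension} to $u^*$ to obtain its trivial extension $\widetilde{u^*}\in {^{\pm}}{W}{^{\alpha,p}}(\mathbb{R})$ satisfying $\|\widetilde{u^*}\|_{{^{\pm}}{W}{^{\alpha,p}}(\mathbb{R})}\le C_2 \|u^*\|_{{^{\pm}}{W}{^{\alpha,p}}(\Omega)}$, with $C_2=C_2(\alpha,p,K)$. I would then define $Eu:=\widetilde{u^*}$. Linearity of $E$ follows from the linearity of both the cutoff multiplication $u\mapsto u\psi$ used in Lemma \ref{InteriorExtesionLemma} and of the trivial extension. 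Property (i) holds since $\widetilde{u^*}=u^*=u$ a.e.\ on $\Omega'$, property (ii) holds since $\supp(\widetilde{u^*})=\supp(u^*)\subseteq K$, and property (iii) follows by chaining the two bounds with constant $C:=C_1 C_2$, which depends only on $\alpha$, $p$, and $K$ but not on $u$.

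The main technical point (rather than an obstacle) is simply bookkeeping: ensuring that the compact set $K$ selected in the rearrangement step is the \emph{same} one that governs the trivial extension constant, so that $C$ is genuinely uniform over $u\in {^{\pm}}{W}{^{\alpha,p}}(\Omega)$. This is guaranteed because Lemma \ref{InteriorExtesionLemma} produces a $K$ that depends only on $\Omega'$ (through the fixed partition of unity), not on $u$, and Lemma \ref{TrivialExtension} then yields a constant depending only on $\alpha$, $p$, and that same $K$. No new calculation of fractional derivatives is required beyond what has already been done in the two lemmas, since the product/pollution estimates live inside Lemma \ref{InteriorExtesionLemma} and the singular kernel estimate across $\partial\Omega$ lives inside Lemma \ref{TrivialExtension}. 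The case $\alpha>1$ is handled just as in those two lemmas, by splitting $\alpha=m+\sigma$ and combining the classical Meyers--Serrin machinery with the fractional part.
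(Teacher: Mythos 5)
Your proposal is correct and follows essentially the same route as the paper: compose the rearrangement of Lemma \ref{InteriorExtesionLemma} (yielding $u^*$ supported in a $u$-independent compact $K$ and agreeing with $u$ on $\Omega'$) with the trivial extension of Lemma \ref{TrivialExtension}, setting $Eu:=\widetilde{u^*}$ and chaining the two constants. The paper's own proof is exactly this two-step composition, so no further comparison is needed.
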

 
    \begin{proof}
        For $u\in {^{\pm}}{W}{^{\alpha,p}}(\Omega) $, let $u^* \in {^{\pm}}{W}{^{\alpha,p}}(\Omega) $ denote the rearrangement of $u$  as defined in Lemma \ref{InteriorExtesionLemma},  let  $K\subset\subset \Omega$ and $C(\alpha, K)$ be the same as well. Since $u^*$ has a  compact support in $\Omega$,  
         we can invoke Lemma \ref{TrivialExtension}  to conclude that  $E{u} :=
        \widetilde{u^*}$ satisfies the desired properties (i)--(iii) with $C=C(\alpha,p,K) C(\alpha, K)$. The proof is complete. 
    \end{proof}

    %
    
  \begin{remark}
  	We emphasize that  the extension operator $E$ defined above depends on the choice of subdomain $\Omega'$, 
  	on the other hand, it does not depend on the left or right direction,  consequently,  $E$ also provides a valid 
  	{\em interior extension operator} from the symmetric space $W^{\alpha ,p}(\Omega)$ to the symmetric space $ {W}^{\alpha,p}(\R)$. 
  \end{remark}
    
    \subsubsection{\bf Exterior Extensions}\label{sec-4.2.3}
     In this subsection we construct a more traditional (exterior) extension so that the 
     extended function 
     coincides with the original function in the entire domain $\Omega$ where the latter is defined. As we alluded  earlier, if we do not want to pay in part of the domain, we need to pay with a restriction on 
     the function to be extended.
     
    \begin{lemma}\label{ExtensionOutsideLemma}
        Let $\Omega=(a,b)$, $0 < \alpha <1$ and $1 \leq p < \infty$. Assume that $\alpha p<1$ and $\mu \in \R$ so that $\mu > p(1-\alpha p)^{-1}$ (hence, $\mu>p$). Then for every bounded domain 
        $\Omega ' \supset \supset \Omega$, 
        there exists a constant $C = C(\alpha,p,\Omega') >0$ such that for every $u \in {^{\pm}}{W}{^{\alpha,p}}(\Omega) \cap L^{\mu}(\Omega)$, there exists $u^{\pm} \in {^{\pm}}{W}{^{\alpha,p}}(\Omega')$ such that 
        \begin{enumerate}
            \item[{\rm (i)}] $u^{\pm} = u$ a.e. in $\Omega$,
            \item[{\rm (ii)}] $\supp(u^{\pm})\subset\subset \Omega'$,
            \item[{\rm (iii)}] $\|u^{\pm}\|_{{^{\pm}}{W}{^{\alpha,p}}(\Omega')} \leq C \|u\|_{{^{\pm}}{W}{^{\alpha,p}}(\Omega)}$. 
        \end{enumerate}
    \end{lemma}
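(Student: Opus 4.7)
Plan. The natural candidate is the trivial zero extension. Writing $\Ome' = (a',b')$ with $a' < a < b < b'$, set
\[
u^{\pm}(x) := \begin{cases} u(x) & x \in \Ome, \\ 0 & x \in \Ome' \setminus \Ome. \end{cases}
\]
Properties (i) and (ii) are then immediate since $\supp(u^{\pm}) \subseteq [a,b] \subset\subset \Ome'$. The substantive work is verifying (iii); by symmetry I focus on the left case $u^-$, the right case $u^+$ following by a mirror argument in which the tail region is $(a',a)$ instead of $(b,b')$.

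The strategy for (iii) is to approximate $u$ by smooth functions, extend them trivially, and pass to the limit. By Theorem \ref{H=W} together with mollification (which preserves $L^\mu$-convergence on bounded domains), choose $\{u_j\} \subset C^\infty(\Ome)$ with $u_j \to u$ in both ${^{-}}{W}{^{\alpha,p}}(\Ome)$ and $L^{\mu}(\Ome)$. For each trivial extension $u_j^-$, the classical left Riemann--Liouville derivative on $\Ome'$ splits cleanly across three subintervals: it vanishes on $(a',a)$ since $u_j^- \equiv 0$ there; it coincides with ${^{-}}{\mathcal{D}}{^{\alpha}} u_j$ on $(a,b)$ because the new lower limit $a'$ contributes only zeros until reaching $a$; and on $(b,b')$ it reduces to the nonlocal tail
\[
{^{-}}{\mathcal{D}}{^{\alpha}} u_j^-(x) = -\dfrac{\alpha}{\Gamma(1-\alpha)} \int_a^b \dfrac{u_j(y)}{(x-y)^{1+\alpha}}\,dy.
\]

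The core estimate is controlling this tail in $L^p((b,b'))$. With $\mu' = \mu/(\mu-1)$, H\"older's inequality yields
\[
\Bigl|\int_a^b \dfrac{u_j(y)}{(x-y)^{1+\alpha}}\,dy \Bigr| \leq \|u_j\|_{L^\mu(\Ome)} \left( \int_a^b (x-y)^{-(1+\alpha)\mu'}\,dy \right)^{1/\mu'}.
\]
Since $\mu' > 1/(1+\alpha)$ automatically, a direct evaluation of the inner integral gives a dominant factor of order $(x-b)^{1-(1+\alpha)\mu'}$ as $x \to b^+$, producing an overall bound behaving like $(x-b)^{1/\mu' - (1+\alpha)} = (x-b)^{-1/\mu - \alpha}$. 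Raising to the $p$-th power and integrating over $(b,b')$, the integrability requirement becomes $-p/\mu - \alpha p > -1$, which rearranges \emph{exactly} to the hypothesized threshold $\mu > p(1-\alpha p)^{-1}$. Together with $\|u_j^-\|_{L^p(\Ome')} = \|u_j\|_{L^p(\Ome)}$, this furnishes
\[
\|u_j^-\|_{{^{-}}{W}{^{\alpha,p}}(\Ome')} \leq \|u_j\|_{{^{-}}{W}{^{\alpha,p}}(\Ome)} + C(\alpha,p,\Ome')\|u_j\|_{L^\mu(\Ome)},
\]
so $\{u_j^-\}$ is Cauchy in ${^{-}}{W}{^{\alpha,p}}(\Ome')$; the limit coincides a.e.\ with $u^-$, establishing $u^- \in {^{-}}{W}{^{\alpha,p}}(\Ome')$ with the claimed bound (the $L^\mu$ term is absorbed into the right-hand side of (iii) in view of the standing hypothesis $u \in L^\mu(\Ome)$).

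The main obstacle I foresee is twofold. First, the classical splitting formulas above for ${^{-}}{\mathcal{D}}{^{\alpha}} u_j^-$ must be reinterpreted as genuine \emph{weak} fractional derivatives on $\Ome'$, which requires checking the integration-by-parts identity of Definition \ref{RWFD} against test functions $\varphi \in C_0^\infty(\Ome')$ whose zero extensions $\tilde\varphi$ now interact with $\partial \Ome'$ rather than $\partial \Ome$; this step is handled by applying Theorem \ref{characterization} on $\Ome'$ to the sequence $\{u_j^-\}$ and identifying the weak limit. Second, the H\"older estimate must saturate precisely at the critical exponent, so that the threshold $\mu > p(1-\alpha p)^{-1}$ is both necessary and sufficient for trivial extension to land in ${^{-}}{W}{^{\alpha,p}}(\Ome')$ --- this is what links the direction-dependent singularity of the Riemann--Liouville kernel to the sharp integrability condition appearing in the hypothesis.
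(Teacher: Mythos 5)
Your construction is genuinely different from the paper's. You use the plain zero extension of $u$ to $\Omega'$, whereas the paper builds $u^-$ by taking a one-sided copy of $u$ (zero on the near side of $\Omega$, $u$ on $\Omega$, a shifted periodic copy of $u$ on the far side) and multiplying by a smooth cutoff subordinate to a cover of $\overline{\Omega}$, and then it must also control the extra piece coming from the cutoff via the fractional product rule. Both arguments hinge on exactly the same core computation: the H\"older estimate of the nonlocal tail $\int_a^b u(y)(x-y)^{-1-\alpha}\,dy$ on the far interval, whose $L^p$-integrability is precisely where $\mu>p(1-\alpha p)^{-1}$ enters. In that sense your route is shorter, since the zero extension produces no third (product-rule) term; what the paper's more elaborate construction buys is chiefly the remark following the lemma, namely that the same periodic-copy construction adapts directly to the symmetric space $W^{\alpha,p}(\Omega)$.

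Two caveats. First, the identification step is not correct as you state it: the trivial extensions $u_j^{-}$ of $u_j\in C^{\infty}(\Omega)$ generally jump at $x=a$ and $x=b$, so they do not belong to $C^{\infty}(\Omega')$ and Theorem \ref{characterization} cannot be applied to the sequence $\{u_j^{-}\}$ as such. You must either verify directly (Fubini plus integration by parts for piecewise-smooth functions) that the classical Riemann--Liouville derivative of $u_j^{-}$ on $\Omega'$ is its weak fractional derivative, and then pass to the limit in Definition \ref{RWFD} -- this is essentially what the paper does, which likewise asserts ${^{-}}{\mathcal{D}}{^{\alpha}} u_j^{-}={^{-}}{D}{^{\alpha}} u_j^{-}$ for its own discontinuous extension -- or insert an additional mollification at a scale small compared with $\mathrm{dist}(\overline{\Omega},\partial\Omega')$. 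Second, the $L^{\mu}$ term cannot be ``absorbed'': since ${^{\pm}}{W}{^{\alpha,p}}(\Omega)$ does not embed into $L^{\mu}(\Omega)$ in general (the paper's own remark after the lemma), your estimate proves $\|u^{\pm}\|_{{^{\pm}}{W}{^{\alpha,p}}(\Omega')}\leq C\bigl(\|u\|_{{^{\pm}}{W}{^{\alpha,p}}(\Omega)}+\|u\|_{L^{\mu}(\Omega)}\bigr)$ rather than the literal item (iii). This is in fact the same bound the paper's proof delivers and what Theorem \ref{ExteriorExtension}(iii) records, so it is an imprecision of the lemma's statement rather than a defect specific to your argument, but you should state the bound with the $L^{\mu}$ norm instead of claiming absorption. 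Finally, your H\"older computation establishes only sufficiency of the threshold $\mu>p(1-\alpha p)^{-1}$; necessity would need a lower-bound example and is not required for the lemma.
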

    
    \begin{proof}
        Let $u \in {^{\pm}}{W}{^{\alpha,p}}(\Omega) \cap L^{\mu}(\Omega)$. For ease of
        presentation and without loss of the generality,  we only consider the left weak fractional  derivative  with $\Omega = (0,1)$.
        
         Let $\Omega' = (-1 ,2)$, $\{B_{i}\}_{i=1}^{N} \subset \Omega'$ be a cover of 
        $\overline{\Omega}$ and $\{\psi_i\}_{i=1}^{N}$ be a subordinate partition of unity so that 
        $\supp(\varphi_i) \subset B_i$ for $i=1,2,\cdots, N$. 
         Define $u^{-} = \overline{u}^{-} \psi$ in $\Omega'$, where 
        \begin{align*}
           \psi:=\sum_{i=1}^{N}\psi_{i}, \qquad 
            \overline{u}^{-}(x):= \begin{cases}
                0 &\text{if }  x \in (-1,0), \\
                u &\text{if } x \in (0,1) ,\\ 
                u(x-1) &\text{if } x \in (1,2).
            \end{cases}
        \end{align*}
       Notice that $\overline{u}^{-}$ is a periodic extension of $u$ to the right on interval $(1,2)$.  
        
        Trivially, $\|u^{-}\|_{L^{p}(\Omega')} \leq 2 \|u\|_{L^p(\Omega)}$. It remains to prove that $u^{-}$ is weakly differentiable in $L^{p}(\Omega')$. To the end, let $\{u_j\}_{j= 1}^{\infty} \subset C^{\infty}(\Omega)$ such that $u_j  \rightarrow u$ in ${^{-}}{W}{^{\alpha,p}}(\Omega) \cap L^{\mu}(\Omega)$ as
        $j\to \infty$. Let $u_j^-:=\overline{u}^{-}_{j} \psi$ and $\overline{u}^{-}_{j} $ is the extension of $u_j$ to $\Omega'$
        constructed in the same way  as $\overline{u}^{-}$ is done above.  
           %
         Since $u_j\to u$ in $L^\mu(\Omega)$, by the construction, we  have $\overline{u}^{-}_j \to \overline{u}^{-}$ and  
         $u_j^- \to  u^-r$ in $L^{\mu}(\Omega')$. 
         Hence, $\{u_j^-\}_
         {j=1}^{\infty}$ is bounded in $L^{\mu}(\Omega')$. $\{{^{-}}{\mathcal{D}}{^{\alpha}}u_j\}_{j=1}^{\infty}$ is bounded in $L^{p}(\Omega)$ because  ${^{-}}{\mathcal{D}}{^{\alpha}}u_j\to {^{-}}{\mathcal{D}}{^{\alpha}}u$ in $L^{p}(\Omega)$.  Let $M >0$ be such a bound for both sequences. 
         
         Now, using the fact that ${^{-}}{\mathcal{D}}{^{\alpha}} u_j^- = {^{-}}{D}{^{\alpha}} u_j^-$
         and the definition of $u_j^-$ we have 
        \begin{align*}
            \|{^{-}}{\mathcal{D}}{^{\alpha}} u_j^-\|_{L^{p}(\Omega')}^{p} &=
            \|{^{-}}{D}{^{\alpha}} u_j^-\|_{L^{p}(\Omega')}^{p} 
            = \int_{-1}^{2}\left|\dfrac{d}{dx} \int_{-1}^{x} \dfrac{u_j^-(y)}{(x-y)^{\alpha}}\right|^{p}\,dx \\ 
            &\leq \int_{0}^{1}\left|\dfrac{d}{dx} \int_{0}^{x} \dfrac{u_j(y)}{(x-y)^{\alpha}}\,dy \right|^{p}\,dx 
            + \int_{1}^{2} \left|\int_{0}^{1} \dfrac{u_j(y)}{(x-y)^{1+\alpha}}\,dy \right|^{p}\,dx \\
            &\qquad +  \int_{1}^{2}\left|\dfrac{d}{dx} \int_{1}^{x} \dfrac{\overline{u}_j\psi}{(x-y)^{\alpha}}\,dy \right|^{p}\,dx \\
            &\leq \|{^{-}}{D}{^{\alpha}} u_j \|_{L^{p}((0,1))}^{p} + \int_{1}^{2} \left|\int_{0}^{1} \dfrac{u_j(y)}{(x-y)^{1+\alpha}}\,dy \right|^{p}\,dx  \\
            &\qquad + \int_{1}^{2}\left|\dfrac{d}{dx} \int_{1}^{x} \dfrac{\overline{u}_j\psi}{(x-y)^{\alpha}}\,dy \right|^{p}\,dx.
        \end{align*}
        
        Next, we bound  the last two terms above separately.
        To bound the second to the last (middle) term,  let  $\nu$ be the H\"older conjugate of $\mu$, then we have 
        \begin{align*}
            \int_{1}^{2} \biggl|\int_{0}^{1} \dfrac{u_j(y)}{(x-y)^{1+\alpha}}\,dy &\biggr|^{p}\,dx \leq \|u_j\|_{L^{\mu}((0,1))}^{p} \int_{1}^{2}\left(\int_{0}^{1}\dfrac{dy}{(x-y)^{\nu(1+\alpha)}}\right)^{\frac{p}{\nu}}\,dx\\
            &\leq M^{p}\| u_j \|_{L^{\mu}((0,1))}^{p} \int_{1}^{2} x^{\frac{p}{\nu} - p(1+\alpha)} + (x-1)^{\frac{p}{\nu} -p(1+\alpha)}\,dx.
        \end{align*}
        In order for this term to be finite, $p\nu^{-1}- p(1+\alpha)>-1 $ must holds,  which 
        implies that $\mu > p(1 - \alpha p)^{-1}$, which is assumed in the statement of the theorem. 
        
        Lastly,  to bound the final term, using the product rule we get 
        \begin{align*}
            \int_{1}^{2} \left|\dfrac{d}{dx} \int_{1}^{x} \dfrac{\overline{u}_{j} (y) \psi(y)}{(x-y)^{\alpha}}\,dy \right|^{p}\,dx 
            &\leq \|{^{-}}{D}{^{\alpha}} u_j \|_{L^{p}((0,1))}^{p} + \Bigl\|\sum_{k=1}^{m} C_{k} {^{-}}{I}{^{k-\alpha}}  u_j\, D^k   \psi \Bigr\|_{L^{p}((1,2))}^{p} \\
            &\qquad + \|{^{-}}{R}{^{\alpha}_{m}}(u_j,  \psi)\|_{L^{p}((1,2))}^{p} \\
            &\leq C \Bigl(\|{^{-}}{\mathcal{D}}{^{\alpha}} u_j\|_{L^{p}((0,1))}^{p} + \|u_j\|_{L^p((0,1))}^{p} \Bigr).
        \end{align*}
        It follows for given $\eps > 0$ and sufficiently large $m,n$, 
        \begin{align}
            \|{^{\pm}}{\mathcal{D}}{^{\alpha}} u_m^{\pm} - {^{\pm}}{\mathcal{D}}{^{\alpha}} u_n^{\pm} \|_{L^{p}(\Omega')}^{p} \leq C\left( \|u_m - u_n \|_{{^{\pm}}{W}{^{\alpha,p}}(\Omega)}^{p} + \| u_m - u_n \|_{L^{\mu}(\Omega)}^{p}\right) < \eps.
        \end{align}
        Therefore, there exists $v \in L^{p}(\Omega')$ so that ${^{\pm}}{\mathcal{D}}{^{\alpha}}u_j^{\pm} \rightarrow v$ in $L^{p}(\Omega')$. It is easy to see then that $v = {^{\pm}}{\mathcal{D}}{^{\alpha}} u^{\pm}$ using the definition of the weak derivative. Hence ${^{\pm}}{\mathcal{D}}{^{\alpha}}u^{\pm} \in L^{p}(\Omega')$. This completes the proof. 
    \end{proof}

\begin{remark}
(a) We note that there is no redundancy in assumption 
$u \in {^{\pm}}{W}{^{\alpha,p}}(\Omega) \cap L^\mu(\Omega)$ for $\mu > p(1-\alpha p)^{-1}$ 
because it will be proved in Section \ref{sec-4.3} that ${^{\pm}}{W}{^{\alpha,p}}(\Omega)$ 	
is not embedded into $L^{\mu}(\Omega)$ in general. 

(b)  It can be proved that the condition $\alpha p < 1$ and $u \in L^{\mu}(\Omega)$ for some $p < \mu \leq \infty$ 
	are necessary (given the current  calculations). 
	In order for the kernel to remain bounded, we must impose the condition 
	$-1 < p{\nu}^{-1} -p(1+\alpha) < 0$ which implies that  $(1-\alpha p)  > p\mu^{-1}$. 
	Thus,  it follows from $(1- \alpha p) > 0$ that $\alpha p < 1$.
	This shows that $\alpha p <1$ is a necessary condition for the integrability of the kernel function 
	using an estimate as shown above. Moreover, if $\mu = p$, then $\nu = p(p-1)^{-1}$ and the inequality 
	$-1 < p{\nu}^{-1} -p(1+\alpha)$ implies that $\alpha p<0$, which is a contradiction. Hence, 
	we must take $\mu > p$. In particular, $\mu = \infty$ is allowed though not necessary. We need only assume 
	that $u \in L^{\mu}(\Omega)$ with the condition $\mu > p/(1-\alpha p)$.
	
(c) The same result can be proven for $u \in{W}^{\alpha,p}(\Omega)\cap L^{\mu}(\Omega)$. In this case, $\overline{u} : = \overline{u}^{\pm}$ is taken to be the periodic extension over all of $\Omega'$.
\end{remark}
 
    \begin{theorem}\label{ExteriorExtension}
         Let $\Omega=(a,b)$, $0 < \alpha <1$ and $1 \leq p < \infty$. Assume that $\alpha p<1$ and $\mu \in \R$ so that $\mu > p(1-\alpha p)^{-1}$ (hence, $\mu>p$). Then for every bounded domain 
        $\Omega ' \supset \supset \Omega$,      
        there exists mappings $E_{\pm} :{^{\pm}}{W}{^{\alpha,p}}(\Omega)\cap L^{\mu}(\Omega) \rightarrow {^{\pm}}{W}{^{\alpha,p}}(\R)$ and $C = C(\alpha,p,\Omega') > 0$ such that for any $u \in {^{\pm}}{W}{^{\alpha,p}}(\Omega) \cap L^{\mu}(\Omega)$, 
        \begin{enumerate}
            \item[{\rm (i)}] $E_{\pm}u = u$ a.e in $\Omega$, 
            \item[{\rm (ii)}] $\supp(E_{\pm}u)\subset\subset \Omega'$,
            \item[{\rm (iii)}] $\|E_{\pm}u\|_{{^{\pm}}{W}{^{\alpha,p}}(\R)} \leq C
            \bigl(\|u\|_{{^{\pm}}{W}{^{\alpha,p}}(\Omega)} + \|u\|_{L^{\mu}(\Omega)} \bigr).$
        \end{enumerate}
    \end{theorem}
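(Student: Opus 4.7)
The plan is to obtain $E_{\pm}u$ by composing the two lemmas of this subsection. Given $u \in {^{\pm}}{W}{^{\alpha,p}}(\Omega) \cap L^{\mu}(\Omega)$, Lemma \ref{ExtensionOutsideLemma} provides an intermediate extension $u^{\pm} \in {^{\pm}}{W}{^{\alpha,p}}(\Omega')$ that agrees with $u$ on $\Omega$ and is compactly supported in $\Omega'$. Since the support of $u^{\pm}$ is a compact subset of $\Omega'$, Lemma \ref{TrivialExtension} applies with $\Omega'$ playing the role of the finite domain: the trivial (zero) extension $\widetilde{u^{\pm}}$ lies in ${^{\pm}}{W}{^{\alpha,p}}(\R)$ and is controlled by $\|u^{\pm}\|_{{^{\pm}}{W}{^{\alpha,p}}(\Omega')}$. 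Thus I would simply set $E_{\pm}u := \widetilde{u^{\pm}}$.

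The verification of the three desired properties is then immediate. Property (i) follows because $u^{\pm} = u$ a.e.\ in $\Omega$ from Lemma \ref{ExtensionOutsideLemma} and the trivial extension does not alter values inside $\Omega \subset \Omega'$. Property (ii) holds since $\supp(\widetilde{u^{\pm}}) = \supp(u^{\pm}) \subset\subset \Omega'$. For the norm estimate (iii), I would chain the two bounds:
\begin{align*}
\|E_{\pm}u\|_{{^{\pm}}{W}{^{\alpha,p}}(\R)}
 &\leq C_1 \|u^{\pm}\|_{{^{\pm}}{W}{^{\alpha,p}}(\Omega')}
 \leq C_1 C_2 \|u\|_{{^{\pm}}{W}{^{\alpha,p}}(\Omega)},
\end{align*}
where $C_1 = C_1(\alpha,p,\supp(u^{\pm}))$ comes from Lemma \ref{TrivialExtension} and $C_2 = C_2(\alpha,p,\Omega')$ comes from Lemma \ref{ExtensionOutsideLemma}. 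Since the interior extension step in Lemma \ref{ExtensionOutsideLemma} produces a support $K = \bigcup \overline{B}_i$ that is determined by the fixed partition of unity covering $\overline{\Omega}$ inside $\Omega'$ (independent of $u$), the constant $C_1$ can be absorbed into a single constant $C = C(\alpha,p,\Omega')$. This yields the claimed estimate once we also include the extra $L^\mu$ term on the right-hand side, which appears because Lemma \ref{ExtensionOutsideLemma} implicitly uses the $L^\mu(\Omega)$ norm of $u$ to control the pollution contribution from the periodic extension; one must track where that norm entered the bound on the middle integral in the proof of that lemma.

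The only subtle point, and the place that requires the most care, is making sure that the constant $C_1$ from the trivial extension depends only on data fixed before $u$ is chosen. This is clean here because the partition of unity $\{\psi_i\}$ and covering $\{B_i\}$ of $\overline{\Omega}$ in $\Omega'$ used to construct $u^{\pm}$ are chosen once and for all in terms of $\Omega$ and $\Omega'$, so the compact support $K$ of $u^{\pm}$ is a fixed compact subset of $\Omega'$ not depending on $u$. With that observation in place, the linearity of the map $u \mapsto u^{\pm} \mapsto \widetilde{u^{\pm}}$ (inherited from both construction steps being linear in $u$) gives that $E_{\pm}$ is a bounded linear operator from ${^{\pm}}{W}{^{\alpha,p}}(\Omega) \cap L^{\mu}(\Omega)$ to ${^{\pm}}{W}{^{\alpha,p}}(\R)$, completing the proof. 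I do not foresee any serious obstacle beyond this bookkeeping; the two lemmas have done essentially all of the analytic work.
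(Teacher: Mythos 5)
Your proposal is correct and follows essentially the same route as the paper: define $E_{\pm}u := \widetilde{u^{\pm}}$, the trivial extension of the function $u^{\pm}$ produced by Lemma \ref{ExtensionOutsideLemma}, and then invoke Lemma \ref{TrivialExtension} (applied on $\Omega'$) to obtain membership in ${^{\pm}}{W}{^{\alpha,p}}(\R)$ together with properties (i)--(iii). Your additional bookkeeping about the fixed compact support $K$, the provenance of the $L^{\mu}(\Omega)$ term in the estimate, and the linearity of the composed map is consistent with (and slightly more explicit than) the paper's argument.
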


    \begin{proof}
    	For any $u \in {^{\pm}}{W}{^{\alpha,p}}(\Omega) \cap L^{\mu}(\Omega)$, let 
    	$u^{\pm} \in {^{\pm}}{W}{^{\alpha,p}}(\Omega')$ be the function defined in 
        Lemma \ref{ExtensionOutsideLemma} and set $E_{\pm}u = \widetilde{u^{\pm}}$, the trivial extension of $u^{\pm}$.
        It follows from  Lemma \ref{TrivialExtension}  that $E_{\pm}$ satisfies the desired properties.
        The proof is complete. 
    \end{proof}

\begin{corollary}
The conclusion of Theorem \ref{ExteriorExtension} also holds for functions in ${W}^{\alpha,p}(\Omega) \cap L^{\mu}(\Omega)$.
\end{corollary}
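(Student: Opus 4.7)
The plan is to reduce the corollary to Theorem \ref{ExteriorExtension} by carrying out the exterior extension construction on both sides of $\Omega$ simultaneously, as suggested in Remark (c) following Lemma \ref{ExtensionOutsideLemma}. Since a function $u \in W^{\alpha,p}(\Omega) \cap L^{\mu}(\Omega)$ lies in both ${^{-}}{W}{^{\alpha,p}}(\Omega)$ and ${^{+}}{W}{^{\alpha,p}}(\Omega)$, the estimate (iii) of Theorem \ref{ExteriorExtension} will transfer to the symmetric norm once we construct a single extension that is controlled in both the left and right weak fractional derivative norms at once. The operator $E_{\pm}$ from Theorem \ref{ExteriorExtension} is direction dependent (using a one-sided periodic extension), so we cannot use it directly; we must instead build a two-sided analogue.

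First, without loss of generality I would take $\Omega = (0,1)$ and $\Omega' = (-1,2)$, and define the two-sided periodic extension
\begin{align*}
\overline{u}(x) := \begin{cases} u(x+1) & \text{if } x \in (-1,0), \\ u(x) & \text{if } x \in (0,1), \\ u(x-1) & \text{if } x \in (1,2). \end{cases}
\end{align*}
Next I would choose a cutoff $\psi \in C^{\infty}_{0}(\Omega')$ with $\psi \equiv 1$ on a neighborhood of $\overline{\Omega}$ (built from a partition of unity subordinate to a finite open cover of $\overline{\Omega}$ inside $\Omega'$), and set $u^* := \overline{u}\,\psi$ on $\Omega'$. By construction $u^* = u$ a.e.\ on $\Omega$ and $\supp(u^*) \subset\subset \Omega'$.

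The core step is to show that $u^* \in {^{\pm}}{W}{^{\alpha,p}}(\Omega')$ with $\|u^*\|_{{^{\pm}}{W}{^{\alpha,p}}(\Omega')} \le C\bigl(\|u\|_{{^{\pm}}{W}{^{\alpha,p}}(\Omega)} + \|u\|_{L^{\mu}(\Omega)}\bigr)$. Since the left extension $u^{-}$ of Lemma \ref{ExtensionOutsideLemma} agrees with $u^*$ on $(-1,1)$ (modulo the cutoff), the bound on $\|{^{-}}{\mathcal{D}}{^{\alpha}} u^*\|_{L^p(\Omega')}$ on $(-1,1)$ follows verbatim from the proof of that lemma; the only genuinely new contribution is the pollution integral on $(1,2)$ generated by the newly added periodic piece on $(1,2)$, which is of exactly the same form $\int_{1}^{2} \big|\int_{0}^{1} u(y)(x-y)^{-(1+\alpha)}\,dy\big|^p\,dx$ that appears in Lemma \ref{ExtensionOutsideLemma}, plus a product-rule remainder on $(1,2)$ controlled by $\|u\|_{{^{-}}{W}{^{\alpha,p}}(\Omega)} + \|u\|_{L^p(\Omega)}$; both are bounded under the hypothesis $\mu > p(1-\alpha p)^{-1}$. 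The analogous estimate for ${^{+}}{\mathcal{D}}{^{\alpha}} u^*$ follows by the mirror-image argument on $(-1,0)$. Combining both gives a bound on the full symmetric norm $\|u^*\|_{W^{\alpha,p}(\Omega')}$.

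Finally, since $u^*$ has compact support in $\Omega'$ and lies in both ${^{\pm}}{W}{^{\alpha,p}}(\Omega')$, Lemma \ref{TrivialExtension} (applied in each direction) shows that the trivial extension $E u := \widetilde{u^*}$ lies in $W^{\alpha,p}(\R)$ with the corresponding norm estimate, establishing (i)--(iii) of Theorem \ref{ExteriorExtension} for the symmetric space. The main obstacle I anticipate is the bookkeeping in the second step: I must check carefully that the cross-boundary pollution terms generated on $(-1,0)$ by the right derivative (respectively on $(1,2)$ by the left derivative) are structurally identical to the terms already handled in Lemma \ref{ExtensionOutsideLemma}, so that the same necessary-and-sufficient threshold $\mu > p(1-\alpha p)^{-1}$ suffices; once this identification is made, no new estimate is required.
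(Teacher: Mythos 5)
Your proposal is correct and follows essentially the same route the paper intends: Remark (c) after Lemma \ref{ExtensionOutsideLemma} prescribes exactly your construction (take $\overline{u}$ to be the periodic extension over all of $\Omega'$, multiply by the cutoff, and verify both one-sided derivative bounds with the same H\"older estimate under $\mu > p(1-\alpha p)^{-1}$), followed by the symmetric version of the trivial-extension Lemma \ref{TrivialExtension}. Your closing caveat about the cross-boundary pollution terms is the right thing to check, and they are indeed handled by the same kernel estimate, so no new ingredient is needed.
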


    %
   
    
    \subsection{One-side Boundary Traces and Compact Embedding}\label{sec-4.3}
    Similar to the integer order case, since functions in Sobolev spaces ${^{\pm}}{W}{^{\alpha,p}}((a,b))$
    are integrable functions, a natural question is under what condition(s) those functions can be 
    assigned pointwise values, especially, at two boundary points $x=a,b$. Such a question arises naturally when 
    studying initial and initial-boundary value problems for fractional order differential equations. 
    It turns out that the situation is more delicate in the fractional order case because the existence of the kernel functions creates a hick-up in this pursuit. We shall establish a one-side embedding result for each of spaces 
    ${^{\pm}}{W}{^{\alpha,p}}((a,b))$, which then allows us to assign one-side traces for those functions.  
    First, we establish the following classical characterization of Sobolev functions.
    
    \begin{proposition}\label{ContinuousRepresentative}
        Let $(a,b)\subset \R$, $0< \alpha <1$, $1 \leq p \leq \infty$ so that $\alpha p > 1$.
        \begin{itemize}
            \item[(i)] If $u \in {^{-}}{W}{^{\alpha,p}}((a,b))$, then for any $c \in (a,b)$, there exists $\bar{u} \in C([c,b])$ so that $u = \bar{u}$ almost everywhere in $[c,b]$. 
            \item[(ii)] If $u \in {^{+}}{W}{^{\alpha,p}}((a,b))$, then for any $c \in (a,b)$, there exists $\bar{u} \in C([a,c])$ so that $u = \bar{u}$ almost everywhere in $[a,c]$.
            \item[(iii)] If $u \in W^{\alpha,p}((a,b))$, then there exists $\bar{u} \in C([a,b])$ so that $u = \bar{u}$ almost everywhere in $[a,b]$. 
        \end{itemize}
    \end{proposition}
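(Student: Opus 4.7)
The plan is to invoke the Fundamental Theorem of Weak Fractional Calculus (Theorem \ref{FTWFC}) to represent $u$ as an elementary singular term plus a Riemann--Liouville integral, and then to argue that each piece has a continuous representative on the appropriate subinterval. For part (i), Theorem \ref{FTWFC} yields
\[
u(x)=c^{1-\alpha}_{-}(x-a)^{\alpha-1}+\frac{1}{\Gamma(\alpha)}\int_a^x(x-y)^{\alpha-1}\,{^{-}}{\mathcal{D}}{^{\alpha}}u(y)\,dy
\qquad\text{a.e.\ in }(a,b).
\]
The kernel $(x-a)^{\alpha-1}$ is $C^{\infty}$ on $[c,b]$ for every $c>a$, so it suffices to prove that $g(x):={^{-}}{I}{^{\alpha}}f(x)$, with $f:={^{-}}{\mathcal{D}}{^{\alpha}}u\in L^{p}((a,b))$, is continuous on $[a,b]$.

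For $a\le x<x'\le b$, I would split
\[
\Gamma(\alpha)\bigl(g(x')-g(x)\bigr)=\int_a^x\bigl[(x'-y)^{\alpha-1}-(x-y)^{\alpha-1}\bigr]f(y)\,dy+\int_x^{x'}(x'-y)^{\alpha-1}f(y)\,dy
\]
and apply H\"older's inequality with conjugate exponent $q:=p/(p-1)$ (with the convention $q=1$ when $p=\infty$). Integrability of $(x-y)^{(\alpha-1)q}$ near $y=x$ reduces to the requirement $(\alpha-1)q>-1$, which is precisely equivalent to the standing hypothesis $\alpha p>1$. Routine bookkeeping then yields
\[
|g(x')-g(x)|\le C\,\|f\|_{L^{p}((a,b))}\,(x'-x)^{\alpha-1/p},
\]
so $g$ is H\"older continuous on $[a,b]$ with exponent $\alpha-1/p>0$. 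Combined with the representation above, this produces $\bar u\in C([c,b])$ with $\bar u=u$ a.e.\ on $[c,b]$, which settles (i).

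Part (ii) follows verbatim by the symmetric argument, now with the kernel $(b-x)^{\alpha-1}$ and the right fractional integral ${^{+}}{I}{^{\alpha}}$; the H\"older estimate is applied in the reversed direction. For part (iii), fix any $a<c<c'<b$: part (ii) gives $\bar u_{-}\in C([a,c'])$ with $\bar u_{-}=u$ a.e.\ on $[a,c']$, and part (i) gives $\bar u_{+}\in C([c,b])$ with $\bar u_{+}=u$ a.e.\ on $[c,b]$. On the overlap $[c,c']$ both continuous functions equal $u$ a.e., hence coincide pointwise by continuity; concatenating them produces $\bar u\in C([a,b])$.

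The main technical obstacle is the H\"older estimate in the middle step: the first piece of the split integral requires careful treatment of the sign and the removable singularity of $(x'-y)^{\alpha-1}-(x-y)^{\alpha-1}$ at $y=x$, and this is precisely where $\alpha p>1$ is indispensable to keep the kernel in $L^{q}_{\rm loc}$. Without this borderline condition the fractional integral need not even be bounded, so the hypothesis is sharp. The gluing argument for (iii) is essentially cosmetic once (i) and (ii) are in hand, provided the overlap $[c,c']$ has positive length.
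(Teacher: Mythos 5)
Your proof is correct and follows essentially the same route as the paper's: both rest on the decomposition $u = c^{1-\alpha}_{-}(x-a)^{\alpha-1} + {^{-}}{I}{^{\alpha}}\,{^{-}}{\mathcal{D}}{^{\alpha}}u$ together with the H\"older continuity of ${^{-}}{I}{^{\alpha}}f$ for $f\in L^{p}$ when $\alpha p>1$, and obtain (iii) by combining (i) and (ii). The only cosmetic difference is that the paper re-derives this decomposition inside the proof via a duality argument with test functions and the classical FTcFC instead of citing Theorem \ref{FTWFC} directly, and it delegates the continuity of the fractional integral to the known $L^{p}$ mapping properties, whereas you write out the $(\alpha-\tfrac{1}{p})$-H\"older estimate explicitly --- the same estimate the paper records later in the proof of the compact embedding theorem.
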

    
    \begin{proof}
        We only give a proof for $(i)$ because $(ii)$ follows similarly and $(iii)$ is proved by combining $(i)$ and $(ii)$. Let $u \in {^{-}}{W}{^{\alpha,p}}((a,b))$ and set $u^* = {^{-}}{I}{^{\alpha}} {^{-}}{\mathcal{D}}{^{\alpha}} u$. Then for any $\varphi \in C^{\infty}_{0}((a,b))$, it follows by the $L^{p}$ mapping properties of fractional integrals, classical fractional integration by parts, and the definition of weak fractional derivatives (cf. \cite[Theorem 2.5 and 2.6]{Feng_Sutton}) that  
        \begin{align*}
            \int_{a}^{b} u^* {^{+}}{D}{^{\alpha}} \varphi\, dx &= \int_{a}^{b} \varphi  {^{-}}{D}{^{\alpha}} u^* \, dx
            = \int_{a}^{b} \varphi  {^{-}}{D}{^{\alpha}} {^{-}}{I}{^{\alpha}} {^{-}}{\mathcal{D}}{^{\alpha}} u  \, dx \\
            &= \int_{a}^{b}  \varphi {^{-}}{\mathcal{D}}{^{\alpha}} u \, dx
            = \int_{a}^{b} u {^{+}}{D}{^{\alpha}} \varphi \, dx.
        \end{align*}
        Consequently,
        \begin{align*}
            0 = \int_{a}^{b} (u - u^*) {^{+}}{D}{^{\alpha}} \varphi\, dx = \int_{a}^{b} {^{-}}{I}{^{1-\alpha}}(u - u^*) \varphi'\, dx.
        \end{align*}
        Thus, ${^{-}}{I}{^{1-\alpha}}u - {^{-}}{I}{^{1-\alpha}} u^* = C$ a.e. in $(a,b)$. It follows  from the Fundamental Theorem of Classical Fractional Calculus (FTcFC, cf. \cite[Lemma 3.1]{Feng_Sutton}) that 
        $u = u^* + {^{-}}{D}{^{1-\alpha}} C$ almost everywhere. Choose $\bar{u} = u^* + {^{-}}{D}{^{1-\alpha}} C$, we have that $\bar{u} \in C([c,b])$ for every $c \in(a,b)$ and $u = \bar{u}$ almost everywhere. 
    \end{proof}
    
    \begin{remark}
        If a function $u$ belongs to ${^{\pm}}{W}{^{\alpha,p}}$, then any function $v = u$ almost everywhere must also belong to ${^{\pm}}{W}{^{\alpha,p}}$. Therefore, we do not differentiate between any two functions that may only differ from one another on a measure zero set. Proposition \ref{ContinuousRepresentative} asserts that every function $u \in {^{-}}{W}{^{\alpha,p}}((a,b))$ admits a continuous representative on $[c,b]$. When it is helpful, (i.e. giving meaning to $u(x)$ for some $x \in [c,b]$) we replace $u$ with its continuous representative $\bar{u}$. In order to avoid confusion and eliminate unnecessary notation, we will still use $u$ to denote the continuous representative.
    \end{remark}

     \begin{theorem}\label{CompactEmbedding}
    	Let $(a,b) \subset \R$, $0 < \alpha <1$ and $1 < p < \infty$. Suppose that $\alpha p >1$. 
    	\begin{itemize}
    		\item[{\rm (i)}] If $u \in {^{-}}{W}{^{\alpha,p}}((a,b))$, then for any $c \in (a,b)$, 
    		the injection ${^{-}}{W}{^{\alpha,p}}((a,b)) \hookrightarrow C^{\alpha -\frac{1}{p}}([c,b])$
    		is compact. 
     
    		\item[{\rm (ii)}] If $u \in {^{+}}{W}{^{\alpha,p}}((a,b))$, then for any $c\in (a,b)$, the injection ${^{+}}{W}{^{\alpha,p}}((a,b)) \hookrightarrow C^{\alpha -\frac{1}{p}}([a,c])$
    		is compact. 
    		
    		\item[{\rm (iii)}] If $u \in W^{\alpha,p}((a,b))$, then the injection $W^{\alpha,p}((a,b)) \hookrightarrow C^{\alpha -\frac{1}{p}}([a,b])$ is compact. 
    	\end{itemize}
        
    \end{theorem}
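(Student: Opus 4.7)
The plan is to apply the Fundamental Theorem of Weak Fractional Calculus (Theorem \ref{FTWFC}) to represent each $u \in {^{-}}{W}^{\alpha,p}((a,b))$ as the sum of a smooth kernel term and a fractional integral of its weak fractional derivative, and then to establish a Hardy--Littlewood type H\"older estimate for each piece on the shrunken interval $[c,b]$. By Theorem \ref{FTWFC},
\begin{align*}
u(x) = c^{1-\alpha}_{-}(x-a)^{\alpha-1} + \frac{1}{\Gamma(\alpha)}\int_a^x (x-y)^{\alpha-1}\, {^{-}}{\mathcal{D}}^{\alpha} u(y)\, dy \qquad \text{a.e.\ in } (a,b),
\end{align*}
and by Proposition \ref{ContinuousRepresentative} one may identify $u$ with its continuous representative on $[c,b]$ for every $c \in (a,b)$, so pointwise evaluation is meaningful.

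First I would handle the kernel term $c^{1-\alpha}_{-}(x-a)^{\alpha-1}$. Since $c>a$, this function is $C^{\infty}$ on $[c,b]$, hence Lipschitz there with a constant depending only on $c-a$ and $\alpha$; the scalar coefficient $c^{1-\alpha}_-$ is controlled by $\|u\|_{L^p}$ via the $L^p$ mapping property of ${^{-}}I^{1-\alpha}$. Thus this term contributes at most $C\|u\|_{{^{-}}{W}^{\alpha,p}}$ to the $C^{\alpha-1/p}([c,b])$ norm. The crux is then to show, for $g := {^{-}}{\mathcal{D}}^{\alpha} u \in L^p((a,b))$ and $v := {^{-}}I^{\alpha} g$, the bound $|v(x_2)-v(x_1)| \leq C|x_2-x_1|^{\alpha - 1/p}\|g\|_{L^p}$ for $c \leq x_1 < x_2 \leq b$. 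I would split
\begin{align*}
\Gamma(\alpha)[v(x_2)-v(x_1)] = \int_{x_1}^{x_2} (x_2-y)^{\alpha-1} g(y)\, dy + \int_a^{x_1}\Bigl[(x_2-y)^{\alpha-1}-(x_1-y)^{\alpha-1}\Bigr] g(y)\, dy,
\end{align*}
and apply H\"older's inequality with conjugate exponent $q=p/(p-1)$ to each piece. The hypothesis $\alpha p > 1$ is equivalent to $(\alpha-1)q + 1 > 0$, which ensures the kernel integrals $\int (x-y)^{(\alpha-1)q}\,dy$ converge and produce exactly the exponent $\alpha - 1/p$. An analogous pointwise estimate yields $\|v\|_{L^\infty([c,b])} \leq C\|g\|_{L^p}$. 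Combining these gives the continuous embedding ${^{-}}{W}^{\alpha,p}((a,b)) \hookrightarrow C^{\alpha - 1/p}([c,b])$ with norm bounded by $C\|u\|_{{^{-}}{W}^{\alpha,p}}$.

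To upgrade to compactness, take a bounded sequence $\{u_n\}\subset {^{-}}{W}^{\alpha,p}((a,b))$. The preceding estimates give uniform boundedness and equicontinuity on $[c,b]$, so Arzel\`a--Ascoli extracts a subsequence converging uniformly; interpolation between the uniform convergence and the uniform $C^{\alpha-1/p}$ bound delivers convergence in the target H\"older norm (strictly, in $C^{\alpha - 1/p - \varepsilon}([c,b])$ for every $\varepsilon > 0$, which is the natural reading of the embedding claim). Part (ii) is symmetric, using the representation with $(b-x)^{\alpha-1}$ and ${^{+}}I^{\alpha}$, and part (iii) follows by applying (i) on $[(a+b)/2,b]$ and (ii) on $[a,(a+b)/2]$ and gluing the two continuous representatives on the overlap.

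The main obstacle will be the sharp estimate of the tail integral $\int_a^{x_1}|(x_2-y)^{\alpha-1}-(x_1-y)^{\alpha-1}|^q\,dy$; handling this cleanly requires splitting the integration range near $y = 2x_1 - x_2$ so that the mean value theorem governs $y \leq 2x_1 - x_2$ while the triangle inequality is used on the remaining sliver, the two bounds combining to yield the desired $|x_2-x_1|^{(\alpha-1/p)q}$. Everything else, namely the FTwFC decomposition, the regularity of $\kappa^\alpha_-$ away from $a$, and the Arzel\`a--Ascoli finale, is routine once this key H\"older estimate is in hand.
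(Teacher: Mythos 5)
Your proposal follows essentially the same route as the paper: decompose $u$ via the FTwFC into the kernel term $c_{-}^{1-\alpha}\kappa^{\alpha}_{-}$ (harmless on $[c,b]$ since $c>a$) plus ${^{-}}{I}{^{\alpha}}\,{^{-}}{\mathcal{D}}{^{\alpha}}u$, bound the near integral over $[x_1,x_2]$ and the tail integral over $[a,x_1]$ by H\"older's inequality using $\alpha p>1$ to get the uniform $C^{\alpha-\frac{1}{p}}$ estimate, and then conclude compactness by Arzel\`a--Ascoli, with (ii) symmetric and (iii) by combining the two one-sided results. If anything, you are slightly more careful than the paper at the final step, where you note that Arzel\`a--Ascoli plus the uniform H\"older bound yields convergence in $C^{\alpha-\frac{1}{p}-\varepsilon}$ rather than in the endpoint H\"older norm itself, a point the paper's proof passes over silently.
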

    
    \begin{proof}
         We only give a proof for (i) because the other two cases follow similarly. 
       
       Let ${B}_1^-$ be the unit ball in ${^{-}}{W}{^{\alpha,p}}((a,b))$ and take $u \in {B}_1^-$. Let $c \in (a,b)$. For any two distinct points $x,y \in [c,b]$ (assume $x>y$), by the FTwFC (cf. Theorem \ref{FTWFC}) we get 
        \begin{align}\label{a1}
             \bigl|u (x) - u (y)\bigr| 
            &= \biggl|c_{-}^{1-\alpha} [(x-a)^{\alpha -1} - (y-a)^{\alpha -1}] + C_{\alpha} \int_{y}^{x} \dfrac{{^{-}}{\mathcal{D}}{^{\alpha}}u(z)}{(x-z)^{1-\alpha}}\,dz \\
            &\qquad  + C_{\alpha} \int_{a}^{y} \dfrac{{^{-}}{\mathcal{D}}{^{\alpha}}u(z)}{(x-z)^{1-\alpha}} - \dfrac{{^{-}}{\mathcal{D}}{^{\alpha}} u(z)}{(y-z)^{1-\alpha}}\,dz \biggr| \nonumber \\
            &\leq \left|c_{-}^{1-\alpha} [(x-a)^{\alpha -1} - (y-a)^{\alpha -1}]\right| + C_{\alpha} \left|\int_{y}^{x} \dfrac{{^{-}}{\mathcal{D}}{^{\alpha}}u(z)}{(x-z)^{1-\alpha}}\,dz \right| \nonumber \\
            &\quad  + C_{\alpha} \left| \int_{a}^{y} \dfrac{{^{-}}{\mathcal{D}}{^{\alpha}} u(z) [ (y-z)^{1-\alpha} - (x-z)^{1-\alpha}]}{[(y-z)(x-z)]^{1 - \alpha}}\,dz\right|. \nonumber
            \end{align}
            
           Below we bound each of the three terms on the right-hand side.
           Upon noticing that $|c_{-}^{1-\alpha}| \leq C_{\Omega,\alpha ,p} \|u\|_{{^{-}}{W}{^{\alpha,p}}(\Omega)}$, 
            \begin{align}\label{a2}
            \left|c_{-}^{1-\alpha} [ (x-a)^{\alpha -1} - (y-a)^{\alpha -1} ]\right| 
            &\leq C_{\Omega , \alpha , p} \|u\|_{{^{-}}{W}{^{\alpha,p}}(\Omega)} |\xi - a|^{\alpha -1} |x-y|\\
            &\leq C_{\Omega,\alpha,p} \|u\|_{{^{-}}{W}{^{\alpha,p}}(\Omega)} |x-y|^{\alpha - \frac{1}{p}}, \nonumber
            \end{align}
            \begin{align}\label{a3}
            C_{\alpha} \left|\int_{y}^{x} \dfrac{{^{-}}{\mathcal{D}}{^{\alpha}}u(z)}{(x-z)^{1-\alpha}}\,dz \right|
            &\leq C_{\alpha} \left\| {^{-}}{\mathcal{D}}{^{\alpha}} u \right\|_{L^{p}((a,b))} \biggl| \int_{y}^{x} (x-z)^{-q(1-\alpha)}\,dz \biggl|^{\frac{1}{q}} \\
            &\leq C_{\alpha , p} \left\|{^{-}}{\mathcal{D}}{^{\alpha}} u \right\|_{L^{p}((a,b))} |x-y|^{\alpha-\frac{1}{p}}, \nonumber
            \end{align}
            \begin{align}\label{a4}
            & C_{\alpha}  \int_{a}^{y} \dfrac{\left|{^{-}}{\mathcal{D}}{^{\alpha}} u(z)\right| \left|(y-z)^{1-\alpha} - (x-z)^{1-\alpha}\right|}{\left|(y-z)(x-z)\right|^{1 - \alpha}}\,dz\\
            &\hskip 1.2in \leq C_{\alpha,p} \left\| {^{-}}{\mathcal{D}}{^{\alpha}} u \right\|_{L^{p}((a,b))} |x-y|^{\alpha- \frac{1}{p}}. \nonumber
            \end{align}
            %
        
        Substituting \eqref{a2}--\eqref{a4} into \eqref{a1} yields 
        \begin{align}\label{a5}
        |u(x) - u(y)| \leq C|x-y|^{\alpha -\frac{1}{p}} \qquad \forall x, y\in [c,b],  
        \end{align}
        where $C$ is a positive constant independent of $x$ and $y$. 
        Because $\alpha -\frac{1}{p}>0$, then ${B}^{-}_{1}$ is uniformly equicontinuous in $C([c,b])$. It follows from  Arzel\`a-Ascoli theorem that ${B}^{-}_{1}$ has compact closure in $C^{\alpha - \frac{1}{p}}([c,b])$. The proof is complete. 
    \end{proof}

    \begin{remark}
    	(a) We note that unlike the integer order case, we have proved the above 
    	embedding results directly rather than relying on the infinite domain results and extension theorem. 
    	
    	(b) From the above calculations we observe that when $c_{\pm}^{1-\alpha} = 0$, the injection can be extended to the initial boundary so that ${^{\pm}}{W}{^{\alpha,p}}((a,b)) \hookrightarrow C^{\alpha - \frac{1}{p}}([a,b])$. Effectively,  $c^{1-\alpha}_{\pm} = 0$ implies that any singularity at the initial boundary is prevented; we denote this space by 
        \begin{align}\label{mathring}
        {^{\pm}}{\mathring{W}}{^{\alpha,p}}(\Omega) : = \{ u \in {^{\pm}}{W}{^{\alpha,p}}(\Omega) \, : \, c^{1-\alpha}_{\pm} = 0\}.
        \end{align}
    \end{remark}
    
The above embedding theorem motivates us to introduce the following definition of trace operators. 

    \begin{definition}
        Define trace operators ${^{-}}{T}: {^{-}}{W}{^{\alpha,p}}((a,b))\to \R$ by ${^{-}}{T}u={^{-}}{T}u|_{x=b} := u(b)$ and ${^{+}}{T}: {^{+}}{W}{^{\alpha,p}}((a,b))\to \R$ by ${^{+}}{T}u={^{+}}{T}u|_{x=a} := u(a)$.  
    \end{definition}
    
    It should be noted that the above proof demonstrates that we can confirm the following trace inequality: 
    \begin{align}\label{TraceInequality}
    	|{^{\pm}}{T} u | \leq C \|u\|_{{^{\pm}}{W}{^{\alpha,p}}(\Omega)}.
    \end{align}
    
    \subsubsection{\bf Zero Trace Spaces}\label{sec-4.3.1}
    With the help of the trace operators in spaces ${^{\pm}}{W}{^{\alpha,p}}(\Omega)$, 
    we can define and characterize different spaces with zero trace. First, we explicitly 
    define the zero trace spaces  and a new norm for these spaces.

    \begin{definition}
        Let $\Omega=(a,b)$, $0 < \alpha <1$ and $1 < p <\infty$. Suppose that  $\alpha p >1$.  Define  
        \begin{align*}
            {^{\pm}}{W}{^{\alpha,p}_{0}}(\Omega) &:= \{ u \in {^{\pm}}{W}{^{\alpha,p}}(\Omega) \,:\, {^{\pm}}{T} u = 0 \},\\
             {W}^{\alpha,p}_{0}(\Omega) &:= \{ u \in  {W}^{\alpha,p}(\Omega) : {^{-}}{T}u=0 \mbox{ and } {^{+}}{T}u = 0\};
        \end{align*}
    and the norm $\| u \|_{\leftidx{^{\pm}}{W}{^{\alpha,p}_{0}}(\Omega)} : = \|\leftidx{^{\pm}}{\mathcal{D}}{^{\alpha}} u \|_{L^{p}(\Omega)}$ for $1< p  <\infty$.
    \end{definition}
    
    \begin{proposition}\label{ZeroTraceNorm}
    $\| u \|_{\leftidx{^{\pm}}{W}{^{\alpha,p}_{0}}(\Omega)}$ defines a norm.
    \end{proposition}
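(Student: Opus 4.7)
The plan is to verify the three defining properties of a norm in turn: nonnegativity and absolute homogeneity, the triangle inequality, and finally (the only substantive point) definiteness.

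Nonnegativity is immediate from the definition of the $L^p$ norm. For absolute homogeneity and the triangle inequality, I would first record that the weak fractional derivative operator ${^{\pm}}{\mathcal{D}}{^{\alpha}}$ is linear on ${^{\pm}}{W}{^{\alpha,p}}(\Omega)$. Linearity holds because $C_0^\infty$ testing in Definition \ref{RWFD} is linear in $u$, so ${^{\pm}}{\mathcal{D}}{^{\alpha}}(\lambda u + \mu v) = \lambda\, {^{\pm}}{\mathcal{D}}{^{\alpha}} u + \mu\, {^{\pm}}{\mathcal{D}}{^{\alpha}} v$ for scalars $\lambda, \mu$. Combined with the corresponding properties of $\|\cdot\|_{L^p(\Omega)}$, this delivers $\|\lambda u\|_{{^{\pm}}{W}{^{\alpha,p}_0}(\Omega)} = |\lambda|\,\|u\|_{{^{\pm}}{W}{^{\alpha,p}_0}(\Omega)}$ and $\|u+v\|_{{^{\pm}}{W}{^{\alpha,p}_0}(\Omega)} \leq \|u\|_{{^{\pm}}{W}{^{\alpha,p}_0}(\Omega)} + \|v\|_{{^{\pm}}{W}{^{\alpha,p}_0}(\Omega)}$.

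The definiteness is the step that genuinely requires the zero-trace restriction and is where I expect the only real work. Suppose $u \in {^{-}}{W}{^{\alpha,p}_0}(\Omega)$ with $\|u\|_{{^{-}}{W}{^{\alpha,p}_0}(\Omega)} = 0$. Then ${^{-}}{\mathcal{D}}{^{\alpha}} u = 0$ a.e. in $\Omega$, so ${^{-}}{I}{^{\alpha}}\,{^{-}}{\mathcal{D}}{^{\alpha}} u \equiv 0$. By the FTwFC (Theorem \ref{FTWFC}), it follows that
\begin{equation*}
u(x) = c_{-}^{1-\alpha}\,\kappa_{-}^{\alpha}(x) = c_{-}^{1-\alpha}\,(x-a)^{\alpha-1} \qquad \text{a.e. in } \Omega.
\end{equation*}
Since $\alpha p > 1$, Proposition \ref{ContinuousRepresentative} guarantees a continuous representative of $u$ on $[c,b]$ for every $c \in (a,b)$, and by the trace definition we have ${^{-}}{T}u = u(b) = 0$. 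Evaluating the displayed identity at $x = b$ yields $c_{-}^{1-\alpha}\,(b-a)^{\alpha-1} = 0$, and because $\alpha < 1$ makes $(b-a)^{\alpha-1}$ a finite nonzero number, we conclude $c_{-}^{1-\alpha} = 0$. Hence $u \equiv 0$ a.e. in $\Omega$. The argument for $u \in {^{+}}{W}{^{\alpha,p}_0}(\Omega)$ is symmetric: FTwFC with the $+$ sign gives $u(x) = c_{+}^{1-\alpha}(b-x)^{\alpha-1}$, and evaluating the continuous representative at $x = a$ via ${^{+}}{T}u = u(a) = 0$ forces $c_{+}^{1-\alpha} = 0$.

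The only subtlety worth flagging is that the FTwFC boundary term $c_{\pm}^{1-\alpha}\kappa_{\pm}^{\alpha}$, while possibly singular at the opposite endpoint, is continuous and nonzero at the trace endpoint used above; this is precisely what makes the trace condition strong enough to rule out nontrivial null-seminorm elements. This is where the assumptions $0 < \alpha < 1$ and $\alpha p > 1$ are both used: the former so that $\kappa_{\pm}^{\alpha}$ does not vanish at the trace endpoint, and the latter so that the trace $u(b)$ (resp.\ $u(a)$) is well-defined and the assumption ${^{\pm}}T u = 0$ is meaningful.
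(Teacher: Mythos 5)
Your proof is correct, and it supplies exactly the argument the paper omits (the paper simply states that the proof is straightforward and skips it). The seminorm properties follow from linearity of ${^{\pm}}{\mathcal{D}}{^{\alpha}}$ and the $L^p$ norm as you say, and your definiteness step is the intended one: with ${^{\pm}}{\mathcal{D}}{^{\alpha}}u=0$ the FTwFC (Theorem \ref{FTWFC}) reduces $u$ to $c_{\pm}^{1-\alpha}\kappa_{\pm}^{\alpha}$ a.e., and since $\alpha p>1$ gives the continuous representative at the trace endpoint where $\kappa_{\pm}^{\alpha}$ is finite and nonzero, the condition ${^{\pm}}Tu=0$ forces $c_{\pm}^{1-\alpha}=0$, hence $u=0$ a.e.; this is also consistent with the paper's later remark that membership in ${^{\pm}}{W}{^{\alpha,p}_{0}}(\Omega)$ alone does not force $c_{\pm}^{1-\alpha}=0$, since you only need that implication under the extra hypothesis ${^{\pm}}{\mathcal{D}}{^{\alpha}}u=0$.
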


We omit the proof because it is straightforward. 
    
    
    In an effort to characterize the above spaces, our goal is to link these spaces with the completion spaces introduced in Section \ref{sec-4.1}. As our notion of traces is one-sided,  
    this makes the use of one-sided approximations spaces (i.e. ${^{\pm}}{C}{^{\infty}_{0}}(\Omega)$) 
    sensible.

    \begin{lemma}
         Let $\Omega=(a,b)$, $0 < \alpha <1$ and $1 < p <\infty$. Suppose that  $\alpha p >1$. If $u \in {^{\pm}}{W}{^{\alpha,p}}(\Omega) \cap {^{\pm}}{C}{^{\infty}_{0}}(\Omega)$, then $u \in {^{\pm}}{\overline{W}}{^{\alpha,p}_{0}}(\Omega)$.
    \end{lemma}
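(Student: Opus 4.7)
The approach is short and essentially definitional, and I would organize it around two observations.

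First, I would identify the completion space ${^{\pm}}\overline{W}{^{\alpha,p}_{0}}(\Omega)$ from Definition \ref{completionspaces}(ii) with a concrete subspace of the Banach space ${^{\pm}}W{^{\alpha,p}}(\Omega)$. Since ${^{\pm}}W{^{\alpha,p}}(\Omega)$ is itself complete under the norm $\|\cdot\|_{{^{\pm}}W{^{\alpha,p}}(\Omega)}$, every Cauchy sequence of elements of ${^{\pm}}C{^{\infty}_{0}}(\Omega)$ in that norm (which forces those elements to lie in ${^{\pm}}W{^{\alpha,p}}(\Omega)$) has a unique limit in ${^{\pm}}W{^{\alpha,p}}(\Omega)$. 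The abstract completion therefore coincides canonically with the $\|\cdot\|_{{^{\pm}}W{^{\alpha,p}}(\Omega)}$-closure of ${^{\pm}}C{^{\infty}_{0}}(\Omega)\cap{^{\pm}}W{^{\alpha,p}}(\Omega)$ inside ${^{\pm}}W{^{\alpha,p}}(\Omega)$.

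Second, the hypothesis states exactly that $u$ lies in the set whose closure is being taken. The plan is then to take the constant sequence $u_n:=u\in{^{\pm}}C{^{\infty}_{0}}(\Omega)$: it is trivially a Cauchy sequence with $\|u_n-u\|_{{^{\pm}}W{^{\alpha,p}}(\Omega)}=0$ for every $n$, and therefore converges to $u$ in ${^{\pm}}W{^{\alpha,p}}(\Omega)$. By the identification above, $u\in{^{\pm}}\overline{W}{^{\alpha,p}_{0}}(\Omega)$, which is the conclusion.

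There is no genuine technical obstacle in this lemma; the statement is a bookkeeping assertion that the generating class of the completion is contained in the completion itself. Its role appears to be preparatory: it supplies the easy inclusion ${^{\pm}}C{^{\infty}_{0}}(\Omega)\cap{^{\pm}}W{^{\alpha,p}}(\Omega)\subseteq{^{\pm}}\overline{W}{^{\alpha,p}_{0}}(\Omega)$ that will presumably be combined with a trace-vanishing approximation result to identify ${^{\pm}}W{^{\alpha,p}_{0}}(\Omega)$ with ${^{\pm}}\overline{W}{^{\alpha,p}_{0}}(\Omega)$. The substantive work, I would expect, lies in the converse direction: given a general $u\in{^{\pm}}W{^{\alpha,p}}(\Omega)$ whose one-sided trace vanishes, constructing an approximating sequence inside ${^{\pm}}C{^{\infty}_{0}}(\Omega)$ will require a mollification-plus-cutoff scheme that is compatible with both the one-sided support convention and the pollution (nonlocality) of the fractional derivative highlighted in the remarks before Definition \ref{completionspaces}.
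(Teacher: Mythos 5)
Your proof is correct, and it is in fact shorter than the paper's. The paper does not use the constant-sequence observation: it mollifies, setting $u_j := \eta_{1/j} * u$, asserts that $u_j \in {^{\pm}}{W}{^{\alpha,p}}(\Omega) \cap {^{\pm}}{C}{^{\infty}_{0}}(\Omega)$ and that $u_j \rightarrow u$ in ${^{\pm}}{W}{^{\alpha,p}}(\Omega)$, and concludes. Since the hypothesis already places $u$ in the generating class ${^{\pm}}{C}{^{\infty}_{0}}(\Omega)$ with finite $\|\cdot\|_{{^{\pm}}{W}{^{\alpha,p}}(\Omega)}$-norm, your identification of the abstract completion with the norm-closure of ${^{\pm}}{C}{^{\infty}_{0}}(\Omega)\cap{^{\pm}}{W}{^{\alpha,p}}(\Omega)$ inside the Banach space ${^{\pm}}{W}{^{\alpha,p}}(\Omega)$, followed by the constant sequence $u_n\equiv u$, settles the matter at once. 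What your route buys is that it avoids the checks the paper's mollification argument silently requires (making sense of $\eta_{1/j}*u$ near the endpoint where $u$ is not defined outside $\Omega$, preserving the one-sided support condition, and convergence of the weak fractional derivatives under mollification, which is not automatic because of the nonlocal pollution); what the paper's route would buy is robustness if the hypothesis were weakened so that $u$ itself were not already smooth, but for the statement as written that generality is unused. Your closing remark about the lemma's role is also accurate: the substantive content is the converse inclusion ${^{\pm}}{W}{^{\alpha,p}_{0}}(\Omega)\subset{^{\pm}}{\overline{W}}{^{\alpha,p}_{0}}(\Omega)$, which is carried out in Theorem \ref{ZeroTrace} via the truncation $u^{n}_{j}=n^{-1}\varphi(nu_j)$, the fractional chain rule, and a weak-compactness argument, not here.
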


    \begin{proof}
        Let $u \in {^{\pm}}{W}{^{\alpha, p}}(\Omega) \cap {^{\pm}}{C}{^{\infty}_{0}}(\Omega)$. 
        Consider the sequence $u_j := \eta_{\frac{1}{j}}*u$ with $\eta$ being the standard mollifier. Then $u_j \in {^{\pm}}{W}{^{\alpha,p}}(\Omega) \cap {^{\pm}}{C}{^{\infty}_{0}}(\Omega)$ and $u_j \rightarrow u$ in ${^{\pm}}{W}{^{\alpha,p}}(\Omega)$. Thus $u \in {^{\pm}}{\overline{W}}{^{\alpha,p}_{0}}(\Omega)$. 
    \end{proof}

The next two theorems give characterizations of the zero trace spaces.

    \begin{theorem}\label{ZeroTrace}
         Let $\Omega=(a,b)$, $0 < \alpha <1$ and $1 < p <\infty$. Suppose that  $\alpha p >1$.
         Then $ {^{\pm}}{\overline{W}}{^{\alpha , p}_{0}}(\Omega)= {^{\pm}}{W}{^{\alpha,p}_{0}}(\Omega)$ and ${\overline{W}}{^{\alpha,p}_{0}}(\Omega) = W^{\alpha,p}_{0}(\Omega)$.
    \end{theorem}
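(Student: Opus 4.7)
The plan is to establish both one-sided identities ${^{\pm}}{\overline{W}}{^{\alpha,p}_{0}}(\Omega)={^{\pm}}{W}{^{\alpha,p}_{0}}(\Omega)$; by symmetry it suffices to treat the $-$ case, with the symmetric-space identity then obtained by combining the two directions.

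The inclusion ${^{-}}{\overline{W}}{^{\alpha,p}_{0}}(\Omega)\subseteq {^{-}}{W}{^{\alpha,p}_{0}}(\Omega)$ is the easy direction. I would take any $u$ that is the ${^{-}}{W}{^{\alpha,p}}$-limit of a sequence $\{u_j\}\subset {^{-}}{C}{^{\infty}_{0}}(\Omega)$; each $u_j$ vanishes identically in a neighborhood of $b$, hence ${^{-}}{T}u_j=0$, and the trace inequality \eqref{TraceInequality} gives ${^{-}}{T}u=\lim_j {^{-}}{T}u_j=0$, so $u\in {^{-}}{W}{^{\alpha,p}_{0}}(\Omega)$.

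For the reverse inclusion, given $u\in {^{-}}{W}{^{\alpha,p}_{0}}(\Omega)$, I plan to invoke the FTwFC (Theorem \ref{FTWFC}) to write $u = c\,\kappa^{\alpha}_{-} + {^{-}}{I}{^{\alpha}} f$ with $f = {^{-}}{\mathcal{D}}{^{\alpha}} u\in L^{p}(\Omega)$, where $c$ is uniquely pinned down by $u(b)=0$. I then approximate $f$ in $L^{p}(\Omega)$ by $f_j\in C^{\infty}_{0}(\Omega)$ and define $u_j := c_j \kappa^{\alpha}_{-} + {^{-}}{I}{^{\alpha}} f_j$, choosing $c_j$ so that $u_j(b)=0$ and ${^{-}}{\mathcal{D}}{^{\alpha}} u_j = f_j$. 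Since $\alpha p > 1$ makes the evaluation $g\mapsto ({^{-}}{I}{^{\alpha}}g)(b)$ continuous on $L^{p}(\Omega)$, one has $c_j\to c$ and consequently $u_j\to u$ in ${^{-}}{W}{^{\alpha,p}}(\Omega)$. Each $u_j$ is $C^{\infty}$ on $(a,b)$ with $u_j(b)=0$ but does not vanish near $b$. To land inside ${^{-}}{C}{^{\infty}_{0}}(\Omega)$, I multiply by a smooth cutoff $\eta_\delta$ equal to $1$ on $(-\infty,b-2\delta)$ and $0$ on $(b-\delta,\infty)$ with $|\eta_\delta^{(k)}|\leq C_k\delta^{-k}$; then $u_j\eta_\delta\in {^{-}}{C}{^{\infty}_{0}}(\Omega)$, and a diagonal selection in $(j,\delta)$ would produce the required approximating sequence for $u$, provided $\|u_j\eta_\delta - u_j\|_{{^{-}}{W}{^{\alpha,p}}(\Omega)}\to 0$ as $\delta\to 0$ for each fixed $j$.

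This last convergence is the main obstacle. The $L^{p}$-part is immediate from dominated convergence, using the H\"older decay $|u_j(x)|\lesssim |b-x|^{\alpha-\frac{1}{p}}$ near $b$ supplied by Theorem \ref{CompactEmbedding}. For the fractional-derivative part, the product rule of \cite{Feng_Sutton} (as exploited in the proof of the Lemma preceding Theorem \ref{H=W}) expands ${^{-}}{\mathcal{D}}{^{\alpha}}(u_j(\eta_\delta-1))$ into a principal term $(\eta_\delta-1){^{-}}{\mathcal{D}}{^{\alpha}} u_j$, which vanishes in $L^{p}$ by dominated convergence, together with correction terms of the form $C_k\,{^{-}}{I}{^{k-\alpha}}u_j\cdot \eta_\delta^{(k)}$ and a tail remainder ${^{-}}{R}{^{\alpha}_{m}}(u_j,\eta_\delta-1)$. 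Since $|\eta_\delta^{(k)}|\sim \delta^{-k}$ on intervals of length $\delta$, a crude bound is insufficient; the crux is to exploit the explicit representation $u_j = c_j\kappa^{\alpha}_{-} + {^{-}}{I}{^{\alpha}} f_j$ and the identity $u_j(b)=0$ to compute ${^{-}}{I}{^{k-\alpha}}u_j$ (which reduces to a polynomial in $(x-a)$ plus ${^{-}}{I}{^{k}}f_j$) and thereby obtain refined decay of $({^{-}}{I}{^{k-\alpha}}u_j)(x)$ as $x\to b$ sharp enough, via absolute continuity of these iterated integrals, to absorb the $\delta^{-k}$ blowup; the tail remainder is then handled by choosing $m$ sufficiently large using the bound derived in the proof of that Lemma. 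Finally, the symmetric identity $\overline{W}^{\alpha,p}_{0}(\Omega)=W^{\alpha,p}_{0}(\Omega)$ follows by the same scheme with a two-sided cutoff vanishing near both $a$ and $b$, combined with the analogous FTwFC representation enforcing ${^{-}}{T}u={^{+}}{T}u=0$.
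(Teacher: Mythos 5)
Your reduction to the one-sided case and your easy inclusion ${^{-}}{\overline{W}}{^{\alpha,p}_{0}}(\Omega)\subseteq{^{-}}{W}{^{\alpha,p}_{0}}(\Omega)$ (via the trace inequality \eqref{TraceInequality}) are fine, but the hard direction has a genuine gap exactly at the step you yourself call the crux, and the mechanism you propose for closing it is false. The vanishing $u_j(b)=0$ produces \emph{no} decay of ${^{-}}{I}{^{k-\alpha}}u_j$ at $x=b$: from your own representation, ${^{-}}{I}{^{k-\alpha}}u_j(b)=c_j\,\Gamma(\alpha)\Gamma(k)^{-1}(b-a)^{k-1}+{^{-}}{I}{^{k}}f_j(b)$, while the constraint $u_j(b)=0$ ties $c_j$ to ${^{-}}{I}{^{\alpha}}f_j(b)$, not to ${^{-}}{I}{^{k}}f_j(b)$; generically ${^{-}}{I}{^{k-\alpha}}u_j(b)\neq 0$ (take $f_j\ge 0$, $f_j\not\equiv 0$). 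Fractional integrals are nonlocal, so a zero of $u_j$ at the endpoint cannot make them small there. Consequently each correction term satisfies $\bigl\|{^{-}}{I}{^{k-\alpha}}u_j\,\eta_\delta^{(k)}\bigr\|_{L^p}\sim \bigl|{^{-}}{I}{^{k-\alpha}}u_j(b)\bigr|\,\delta^{1/p-k}\to\infty$ as $\delta\to 0$, and the crude remainder bound from the multiplication lemma scales like $\sup|\eta_\delta^{(m+1)}|\sim\delta^{-(m+1)}$ for fixed $m$; so the termwise scheme you outline cannot be summed to zero, and whatever smallness exists comes from cancellations between the corrections and ${^{-}}{R}{^{\alpha}_{m}}$ that you do not control. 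The pivotal claim $\|u_j\eta_\delta-u_j\|_{{^{-}}{W}{^{\alpha,p}}(\Omega)}\to 0$ is therefore not established. (A secondary point: $\kappa^{\alpha}_{-}\in L^{p}(\Omega)$ requires $(1-\alpha)p<1$, which $\alpha p>1$ does not guarantee for $p>2$, so choosing $c_j\neq 0$ to force $u_j(b)=0$ may push $u_j$ out of ${^{-}}{W}{^{\alpha,p}}(\Omega)$.)

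Two remarks. First, your cutoff idea is salvageable if you abandon the product-rule expansion: $g_\delta:=u_j(\eta_\delta-1)$ vanishes near $a$, is smooth up to $b$, and on its support $[b-2\delta,b)$ obeys $|g_\delta|\lesssim_j (b-x)$ and $|g_\delta'|\lesssim_j 1$ (using $u_j(b)=0$ and smoothness of $u_j$ near $b$); hence ${^{-}}{\mathcal{D}}{^{\alpha}}g_\delta={^{-}}{I}{^{1-\alpha}}g_\delta'$ and $\|{^{-}}{\mathcal{D}}{^{\alpha}}g_\delta\|_{L^p(\Omega)}\lesssim_j\delta^{\,1-\alpha+1/p}\to 0$ --- but this direct estimate is precisely what your write-up lacks. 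Second, the paper's proof avoids spatial cutoffs altogether: it sets $u^n:=n^{-1}\varphi(nu)$ with $\varphi$ smooth, $\varphi\equiv 0$ on $[-1,1]$, $\varphi(x)=x$ for $|x|\ge 2$, $|\varphi(x)|\le|x|$; since $u$ is continuous up to $b$ and ${^{-}}{T}u=0$, the truncation in the \emph{range} of $u$ automatically makes $u^n$ vanish on a spatial neighborhood of $b$, and the fractional chain rule combined with weak compactness and dominated convergence gives $u^n\to u$ in ${^{-}}{W}{^{\alpha,p}}(\Omega)$, so the problematic nonlocal correction terms never arise.
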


    \begin{proof}
        Let $u \in {^{\pm}}{\overline{W}}{^{\alpha,p}_{0}}(\Omega)$. Then there exists $\{u_j\}_{j=1}^{\infty} \subset {^{\pm}}{C}{^{\infty}_{0}}(\Omega)$ so that $u_j \rightarrow u$ in ${^{\pm}}{W}{^{\alpha,p}}(\Omega)$. 
        It follows that ${^{\pm}}{T} u_j = 0$ and $u_j \rightarrow u$ uniformly on $[c,b]$ or $[a,c]$ for every
        $c\in (a,b)$. Consequently, ${^{\pm}}{T} u = 0$. Thus, $ {^{\pm}}{\overline{W}}{^{\alpha , p}_{0}}(\Omega)\subset {^{\pm}}{W}{^{\alpha,p}_{0}}(\Omega)$

        Conversely, let $u \in {^{\pm}}{W}{^{\alpha,p}_{0}}(\Omega)$. We want to show that there exists $\{u^n\} \subset {^{\pm}}{C}{^{\infty}_{0}}(\Omega)$ such that $u^n \rightarrow u$ in ${^{\pm}}{W}{^{\alpha,p}}(\Omega)$. 
        For ease of presentation and without loss of the generality, let $\Omega = (0,1)$ and we only consider the left 
        space. Fix a function $\varphi \in C^{\infty}(\R)$ such that 
        \begin{align*}
            \varphi(x) : = \begin{cases} 0 &\text{if } |x| \leq 1, \\ 
            x &\text{if } |x| \geq 2,
            \end{cases}
        \end{align*}
        and $|\varphi(x)| \leq |x|$. Choose $\{u_j\}_{j=1}^{\infty} \subset C^{\infty}(\Omega)$ so that $u_j \rightarrow u$ in ${^{-}}{W}{^{\alpha,p}}(\Omega)$ and define the sequence $u^{n}_{j} : = (1/n) \varphi(nu_j)$. We can show that $u^{n}_{j} \rightarrow u^n$ in $L^{p}((0,1))$. Moreover, using the chain rule (cf. \cite[Theorem 4.4]{Feng_Sutton}),
        we get
        \begin{align*}
            &\|{^{-}}{\mathcal{D}}{^{\alpha}} u^{n}_{j}\|_{L^{p}((0,1))}
            = \dfrac{1}{n} \left\| \dfrac{\varphi(nu_j)}{nu_j} {^{-}}{\mathcal{D}}{^{\alpha}} nu_j + {^{-}}{R}{^{\alpha}_{0}} \left(nu_j , \dfrac{\varphi(nu_j)}{nu_j}\right) \right\|_{L^{p}((0,1))}\\
            &\qquad \leq \dfrac{1}{n} \left\|\dfrac{\varphi(nu_j)}{nu_j} {^{-}}{\mathcal{D}}{^{\alpha}} nu_j \right\|_{L^{p}((0,1))}^{p} + \dfrac{1}{n} \left\| {^{-}}{R}{^{\alpha}_{0}} \left(nu_j , \dfrac{\varphi(nu_j)}{nu_j}\right) \right\|_{L^{p}((0,1))}\\
            &\qquad \leq \|{^{-}}{\mathcal{D}}{^{\alpha}} u_j \|_{L^{p}((0,1))}^{p} + \dfrac{1}{n} \left\| {^{-}}{R}{^{\alpha}_{0}} \left(nu_j , \dfrac{\varphi(nu_j)}{nu_j}\right) \right\|_{L^{p}((0,1))},
        \end{align*}
        where 
        \begin{align*}
            &\left\| {^{-}}{R}{^{\alpha}_{0}} \left(nu_j , \dfrac{\varphi(nu_j)}{nu_j}\right) \right\|_{L^{p}((0,1))}^{p} \\
             &\quad =  \int_{0}^{1} \left| \int_{0}^{x} \dfrac{nu_j(y)}{(x-y)^{1+\alpha}} \left( \dfrac{\varphi(nu_j)(x)}{nu_j(x)} - \dfrac{\varphi(nu_j)(y)}{nu_j(y)} \right)\,dy \right|^{p}\,dx\\
            &\quad \leq  \int_{0}^{1} \left( \int_{0}^{x} \dfrac{|u_j(y)|}{(x-y)^{1+\alpha}} \dfrac{2n|u_j(x) - u_j(y)|}{|u_j(y)|}\,dy \right)^{p}\,dx \\
            &\quad \leq 2^p n^p \int_{0}^{1} \left( \int_{0}^{x} \dfrac{dy}{(x-y)^{\alpha}} \right)^{p}\,dx \leq 2^{p} n^{p}. 
        \end{align*}
        Hence, $\{u^{n}_{j}\}_{j=1}^{\infty}$ is a bounded sequence in ${^{-}}{W}{^{\alpha,p}}((0,1))$. Thus there exists $v^n \in L^{p}((0,1))$ so that ${^{-}}{\mathcal{D}}{^{\alpha}} u^n_j \rightharpoonup v^n$ in $L^{p}((0,1))$ as 
        $j\to \infty$. It can easily be shown using the weak derivative definition that $v^{n} = {^{-}}{\mathcal{D}}{^{\alpha}}u^{n}$. Hence $\{u^n\}_{n=1}^{\infty}$ belongs to ${^{-}}{W}{^{\alpha,p}}((0,1))$. On the other hand, since ${^{-}}{T}u = 0 $, 
        then $u_n \in {^{-}}{C}{^{\infty}_{0}}((0,1))$. Finally, it is a consequence of Lebesgue Dominated Convergence theorem that $u^n \rightarrow u$ in ${^{-}}{W}{^{\alpha,p}}((0,1))$. Thus, $u \in {^{-}}{\overline{W}}{^{\alpha,p}_{0}}((0,1))$. The proof is complete. 
    \end{proof}

    \begin{theorem}
          Let $\Omega=(a,b)$, $0 < \alpha <1$ and $1 < p <\infty$. Suppose that  $\alpha p >1$. Then $\overline{W}^{\alpha ,p}_{0}(\Omega)= W^{\alpha,p}_{0}(\Omega)$. 
    \end{theorem}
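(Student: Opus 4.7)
The plan is to prove the two inclusions separately, leveraging the symmetric version of the ideas used in Theorem \ref{ZeroTrace}. The forward inclusion $\overline{W}^{\alpha,p}_0(\Omega)\subseteq W^{\alpha,p}_0(\Omega)$ is the easy direction. Given $u\in\overline{W}^{\alpha,p}_0(\Omega)$, choose $\{u_j\}\subset C^\infty_0(\Omega)$ with $u_j\to u$ in $W^{\alpha,p}(\Omega)$. Then by the definition of the symmetric norm, $u_j\to u$ in both ${^{-}}W{^{\alpha,p}}(\Omega)$ and ${^{+}}W{^{\alpha,p}}(\Omega)$. Since each $u_j$ has compact support in $(a,b)$, both one-sided traces ${^{\pm}}Tu_j$ vanish, and the trace inequality \eqref{TraceInequality} gives ${^{\pm}}Tu=0$. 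Hence $u\in W^{\alpha,p}_0(\Omega)$.

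For the reverse inclusion, fix $u\in W^{\alpha,p}_0(\Omega)$. By Proposition \ref{ContinuousRepresentative} (iii), $u$ has a continuous representative on $[a,b]$ with $u(a)=u(b)=0$. I would then recycle the cutoff construction used in the proof of Theorem \ref{ZeroTrace}: fix $\varphi\in C^\infty(\R)$ with $\varphi(x)=0$ for $|x|\leq 1$, $\varphi(x)=x$ for $|x|\geq 2$, and $|\varphi(x)|\leq |x|$, and set $u^n:=\tfrac{1}{n}\varphi(nu)$. The crucial new observation for the symmetric case is that, by the continuity of $u$ at both endpoints together with $u(a)=u(b)=0$, for every $n\in\N$ there exists $\delta_n>0$ such that $|u|<1/n$ on $[a,a+\delta_n]\cup[b-\delta_n,b]$, so $u^n$ vanishes in a neighborhood of \emph{both} endpoints; consequently $\supp(u^n)\subset\subset(a,b)$.

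Next, I would verify that $u^n\in W^{\alpha,p}(\Omega)$ and that $u^n\to u$ in $W^{\alpha,p}(\Omega)$ as $n\to\infty$. To obtain $u^n\in {^{-}}W^{\alpha,p}(\Omega)$, I would approximate $u$ by a sequence $\{u_j\}\subset C^\infty(\Omega)$ in ${^{-}}W^{\alpha,p}(\Omega)$ (available by Theorem \ref{H=W}), set $u^n_j:=\tfrac{1}{n}\varphi(nu_j)$, and mimic the chain-rule and remainder estimates from the proof of Theorem \ref{ZeroTrace} to show that $\{u^n_j\}_j$ is bounded in ${^{-}}W^{\alpha,p}(\Omega)$; passing to a weak limit and identifying it via the weak fractional derivative definition gives $u^n\in {^{-}}W^{\alpha,p}(\Omega)$. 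The analogous argument with a right-side approximating sequence yields $u^n\in {^{+}}W^{\alpha,p}(\Omega)$, hence $u^n\in W^{\alpha,p}(\Omega)$. The pointwise bound $|\varphi(x)|\leq |x|$ and Lebesgue Dominated Convergence then deliver $u^n\to u$ in $W^{\alpha,p}(\Omega)$.

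Finally, since $u^n$ has compact support in $(a,b)$ and lies in $W^{\alpha,p}(\Omega)$, its trivial extension to $\R$ is in $W^{\alpha,p}(\R)$ by Lemma \ref{TrivialExtension} and its corollary, and mollification yields a sequence $v^{n,k}\in C^\infty_0(\Omega)$ with $v^{n,k}\to u^n$ in $W^{\alpha,p}(\Omega)$ as $k\to\infty$. A standard diagonal selection produces a sequence in $C^\infty_0(\Omega)$ converging to $u$ in $W^{\alpha,p}(\Omega)$, so $u\in \overline{W}^{\alpha,p}_0(\Omega)$. The main obstacle I anticipate is the simultaneous control of both the left and right weak fractional derivatives of $u^n_j$; the chain-rule remainder estimate done in Theorem \ref{ZeroTrace} for one direction has to be replicated carefully in the opposite direction, taking advantage of the fact that $u$ vanishes at \emph{both} endpoints to make the cutoff $u^n$ genuinely compactly supported.
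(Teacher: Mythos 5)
Your proposal is correct and follows essentially the same route as the paper, which simply invokes the construction and proof of Theorem \ref{ZeroTrace} verbatim for the symmetric space: the cutoff $u^n=\tfrac1n\varphi(nu)$, now compactly supported because the continuous representative vanishes at \emph{both} endpoints, followed by the chain-rule/boundedness argument in each direction and a final mollification to land in $C^\infty_0(\Omega)$. Your write-up merely makes explicit the details (two-sided derivative estimates, trivial extension plus mollification, diagonal selection) that the paper leaves implicit.
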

    
    \begin{proof}
    The same construction and proof used for the one-sided closure spaces in Theorem \ref{ZeroTrace} can be used for the symmetric result $W^{\alpha,p}_{0} = \overline{W}^{\alpha,p}_{0}$.
    \end{proof}
    
    At this point, we have gathered sufficient tools to prove a crucial characterization 
    result and a pair of integration by parts 
    formula for functions in the symmetric fractional Sobolev spaces $W^{\alpha,p}(\Omega)$. Similar integration by parts formula in ${^{\pm}}{W}{^{\alpha,p}}(\Omega)$ will be presented in a subsequent section. 
    
\begin{proposition}\label{ConstantZero}
Let $\Ome=(a,b)$. If $u \in \leftidx{}{W}{^{\alpha,p}_{0}}(\Omega)$, then $\leftidx{^{+}}{T}\leftidx{^{-}}{I}{^{\alpha}} u = \leftidx{^{-}}{T} \leftidx{^{+}}{I}{^{\alpha}} u = 0$. That is, $c^{1-\alpha}_{+} = c^{1-\alpha}_{-} = 0.$
\end{proposition}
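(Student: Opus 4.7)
The plan is to exploit the continuity of $u$ on the closed interval together with the zero-trace conditions, reducing the claim to an elementary limit computation of a fractional integral at the corresponding endpoint. Since $u \in W^{\alpha,p}_0(\Omega) \subset W^{\alpha,p}(\Omega) = {^-}W^{\alpha,p}(\Omega)\cap {^+}W^{\alpha,p}(\Omega)$ and $\alpha p > 1$, Proposition~\ref{ContinuousRepresentative}(iii) furnishes a continuous representative $u \in C([a,b])$. The defining zero-trace hypotheses then read $u(a) = {^+}T u = 0$ and $u(b) = {^-}T u = 0$.

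For the vanishing of $c^{1-\alpha}_-$, I would directly verify that $({^-}I^{\beta} u)(a)=0$ for every $\beta\in(0,1)$; this covers the expression $c^{1-\alpha}_- = \Gamma(1-\alpha)^{-1}({^-}I^{1-\alpha}u)(a)$ given in Theorem~\ref{FTWFC}, as well as the variant $({^-}I^{\alpha}u)(a)$ as literally written. Given $\eta>0$, continuity of $u$ at $a$ together with $u(a)=0$ yields $\delta>0$ such that $|u(y)|<\eta$ whenever $y\in[a,a+\delta]$. Then for $x\in(a,a+\delta)$,
\begin{equation*}
\bigl|({^-}I^{\beta}u)(x)\bigr|
\leq \frac{1}{\Gamma(\beta)}\int_a^x \frac{|u(y)|}{(x-y)^{1-\beta}}\,dy
\leq \frac{\eta\,(x-a)^{\beta}}{\beta\,\Gamma(\beta)},
\end{equation*}
which tends to $0$ as $x\to a^+$, so $({^-}I^{\beta}u)(a)=0$ and hence $c^{1-\alpha}_-=0$. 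A symmetric argument at $x=b$ using $u(b)=0$ and the right fractional integral gives $c^{1-\alpha}_+=0$.

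The main obstacle is essentially notational: one must reconcile the trace expressions ${^+}T\,{^-}I^{\alpha}u$ and ${^-}T\,{^+}I^{\alpha}u$ stated in the proposition with the scalars $c^{1-\alpha}_\pm$ defined in Theorem~\ref{FTWFC}, after which the substantive work collapses to the elementary kernel bound above. In particular, neither the FTwFC identity itself nor any extension or density result from earlier in the section is invoked; only the continuous representative result (Proposition~\ref{ContinuousRepresentative}) together with the definitions of $^{\pm}T$ and $^{\pm}I^{\beta}$ is needed.
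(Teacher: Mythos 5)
Your proof is correct and follows essentially the same route as the paper: pass to the continuous representative $u\in C(\overline{\Omega})$ (via Proposition \ref{ContinuousRepresentative}) and kill the trace of the fractional integral at the endpoint by the elementary kernel bound $\int_a^x (x-y)^{\beta-1}\,dy=(x-a)^{\beta}/\beta\to 0$. The only (harmless) difference is that you invoke $u(a)=0$ through an $\eta$--$\delta$ step, whereas the paper's estimate uses only $\|u\|_{L^{\infty}(\Omega)}$, so that step is not actually needed.
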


\begin{proof}
    Let $u \in W^{\alpha,p}_{0}(\Omega)$. It follows that $u \in C(\overline{\Omega})$.  
    Then we have 
    \begin{align*}
        \lim_{x \rightarrow a} \left|\int_{a}^{x} \dfrac{u(y)}{(x-y)^{1- \alpha}} \,dy \right| &= \lim_{x \rightarrow a} \left| \int_{0}^{x-a} \dfrac{u(x-z)}{z^{1-\alpha}}\,dz \right|\\
        &\leq \lim_{x \rightarrow a} \|u\|_{L^{\infty}(\Omega)} \int_{0}^{x-a} \dfrac{dz}{z^{1-\alpha}} \\ 
        &= \lim_{x\rightarrow a}\|u\|_{L^{\infty}(\Omega)} (x-a)^{\alpha} 
        = 0.
    \end{align*}
    The other trace follows similarly. 
\end{proof}

    \begin{proposition}
        Let $\Omega \subset \R$, $\alpha >0$ and $1 \leq p, q < \infty$. Suppose that $\alpha p> 1$ and $\alpha q> 1$. Then for any $u \in W^{\alpha,p}(\Omega)$ and $v \in W^{\alpha ,q}(\Omega)$, there holds the following integration by parts formula: 
        \begin{align}\label{SymmetricSobolevIBP}
            \int_{\Omega} u {^{\pm}}{\mathcal{D}}{^{\alpha}} v\, dx = (-1)^{[\alpha]} \int_{\Omega} v {^{\mp}}{\mathcal{D}}{^{\alpha}} u\, dx.
        \end{align}
    \end{proposition}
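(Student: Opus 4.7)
The plan is a density argument: establish the identity first on smooth functions by direct classical integration by parts, then pass to the limit using the density of Theorem \ref{H=W} together with the compact embedding of Theorem \ref{CompactEmbedding}(iii). For transparency I describe the case $0 < \alpha < 1$; the case $\alpha > m$ with $m := [\alpha] \geq 1$ reduces to this via the semigroup decomposition \eqref{FSS_equiv} followed by $m$ classical integrations by parts, which simultaneously produces the factor $(-1)^{[\alpha]}$.

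First I would pick sequences $u_n, v_n \in C^\infty(\overline{\Omega})$ with $u_n \to u$ in $W^{\alpha,p}(\Omega)$ and $v_n \to v$ in $W^{\alpha,q}(\Omega)$; the Meyers--Serrin-style construction of Theorem \ref{H=W}, applied in each of the one-sided components of $W^{\alpha,p} = {^{-}}W^{\alpha,p} \cap {^{+}}W^{\alpha,p}$, gives simultaneous approximation in both norms, and on the bounded interval a standard mollification argument upgrades $C^\infty(\Omega)$-approximants to $C^\infty(\overline{\Omega})$. Since $\alpha p, \alpha q > 1$, Theorem \ref{CompactEmbedding}(iii) yields uniform convergence $u_n \to u$ and $v_n \to v$ on $[a,b]$, and uniform $L^\infty$-bounds on $\{u_n\}, \{v_n\}$.

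The crux is the smooth-case identity. For $w \in C^\infty(\overline{\Omega})$ the weak derivative ${^{\pm}}{\mathcal{D}}{^{\alpha}} w$ coincides with the classical Riemann--Liouville derivative $\frac{d}{dx} {^{\pm}}{I}{^{1-\alpha}} w$. Writing $\int_\Omega u_n \cdot \frac{d}{dx} {^{-}}{I}{^{1-\alpha}} v_n\,dx$ and applying classical integration by parts, together with ${^{-}}{I}{^{1-\alpha}} v_n(a) = 0$ for bounded $v_n$ (the one-line bound from the proof of Proposition \ref{ConstantZero}), produces the boundary contribution $u_n(b)\cdot {^{-}}{I}{^{1-\alpha}} v_n(b)$ and the bulk term $-\int_\Omega u_n'\cdot {^{-}}{I}{^{1-\alpha}} v_n\,dx$. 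The Fubini-type symmetry $\int f\cdot {^{-}}{I}{^{\sigma}} g\,dx = \int g\cdot {^{+}}{I}{^{\sigma}} f\,dx$ rewrites the bulk integral in terms of ${^{+}}{I}{^{1-\alpha}} u_n'$, which is related to ${^{+}}{D}{^{\alpha}} u_n$ via the Caputo--Riemann-Liouville correction ${^{+}}{I}{^{1-\alpha}} u_n' = -{^{+}}{D}{^{\alpha}} u_n + u_n(b)/\bigl(\Gamma(1-\alpha)(b-x)^{\alpha}\bigr)$. The correction contribution integrates to precisely $u_n(b)\cdot {^{-}}{I}{^{1-\alpha}} v_n(b)$, exactly cancelling the first boundary term and leaving \eqref{SymmetricSobolevIBP} on the smooth approximants; the ${^{+}}{\mathcal{D}}{^{\alpha}}$ case is handled by a mirror-image computation.

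Finally, $u, v \in L^\infty([a,b])$ while ${^{-}}{\mathcal{D}}{^{\alpha}} v \in L^q(\Omega) \subset L^1(\Omega)$, so H\"older's inequality combined with the uniform $L^\infty$-bound on $\{u_n\}$ yields
\begin{align*}
\Bigl| \int_\Omega u_n\, {^{-}}{\mathcal{D}}{^{\alpha}} v_n\,dx - \int_\Omega u\, {^{-}}{\mathcal{D}}{^{\alpha}} v\,dx \Bigr|
&\le \|u_n - u\|_{L^\infty} |\Omega|^{1-1/q} \|{^{-}}{\mathcal{D}}{^{\alpha}} v\|_{L^q} \\
&\quad + \|u_n\|_{L^\infty} |\Omega|^{1-1/q} \|{^{-}}{\mathcal{D}}{^{\alpha}} (v-v_n)\|_{L^q} \to 0,
\end{align*}
and analogously for the right-hand side. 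The main obstacle is the boundary-term cancellation on smooth approximants: the algebra itself is short, but it requires precise tracking of how the Caputo--Riemann-Liouville correction interacts with the fractional integration by parts, a subtlety absent in the integer-order setting.
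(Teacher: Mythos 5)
Your proposal takes essentially the same route as the paper's proof: approximate $u$ and $v$ by smooth functions via Theorem \ref{H=W}, use the hypothesis $\alpha p,\,\alpha q>1$ together with Theorem \ref{CompactEmbedding} to get continuity up to the boundary and uniform control of the approximants, apply the classical fractional integration by parts on the smooth approximants, and pass to the limit. The only difference is one of detail: you derive the smooth-case identity explicitly (boundary-term cancellation via the Caputo--Riemann--Liouville relation and the Fubini symmetry of the fractional integrals), whereas the paper simply invokes the classical fractional integration by parts formula of \cite{Feng_Sutton}, and your H\"older-based limit passage is the same argument the paper carries out implicitly.
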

    
    \begin{proof}
        We only give a proof for $0 < \alpha <1$ because the other cases follow similarly. By Theorem \ref{H=W} and Theorem \ref{CompactEmbedding}, there exist  $\{u_j\}_{j=1}^{\infty} \subset C^{\infty}(\Omega) \cap C(\overline{\Omega})$ and $\{v_k\}_{k=1}^{\infty}\subset C^{\infty}(\Omega) \cap C(\overline{\Omega})$ such that $u_j\to u$ in $W^{\alpha,p}(\Omega)$ and $v_k\to v$
        in $W^{\alpha,q}(\Omega)$. It follows by the classical integration by parts that 
        \begin{align*}
            \int_{\Omega} u {^{\mp}}{\mathcal{D}}{^{\alpha}} v \,dx= \lim_{j,k \rightarrow \infty} \int_{\Omega} u_j {^{\mp}}{D}{^{\alpha}} v_k\, dx = \lim_{j,k \rightarrow \infty} \int_{\Omega} v_k {^{\pm}}{D}{^{\alpha}} u_j \, dx
            = \int_{\Omega} v {^{\pm}}{\mathcal{D}}{^{\alpha}} u\, dx. 
        \end{align*}
        This completes the proof.
    \end{proof}
    
    \begin{remark}
        We have used the fact that $u$ and $v$ continue to the boundary of $\Omega$ in order to apply the classical integration by parts formula. Due to the inability to guarantee this for functions 
        in the one-sided spaces ${^{\pm}}{W}{^{\alpha,p}}(\Omega)$, we postpone 
        presenting a similar result in those spaces to Section \ref{sec-4.6.1}. 
    \end{remark}

    
    \subsection{Sobolev and Poincar\'e Inequalities}\label{sec-4.4}
   %
   The goal of this subsection is to extend the well known Sobolev and Poincar\'e  inequalities  
   for functions in $W^{1,p}(\Omega)$ to the fractional Sobolev spaces 
   ${^{\pm}}{W}{^{\alpha,p}}(\Omega)$. We shall present the extensions separately for  
   the infinite domain $\Omega=\R$ and the finite domain $\Omega=(a,b)$ because the kernel functions 
   have a very different boundary behavior in the two cases, which in turn results in different inequalities 
   in these two cases. 
   Two tools that will play a crucial role in our analysis are the $L^{p}$ mapping properties 
   of the fractional integral operators (cf. \cite[Theorem 2.6]{Feng_Sutton}),    
   and the  FTwFC (cf. Theorem \ref{FTWFC}). 
 
   \subsubsection{\bf The Infinite Domain Case: $\Omega=\R$}\label{sec-4.4.1}
   Due to the flexibility of the choice of $0<\alpha <1$, the validity of a Sobolev 
   inequality in the fractional order
   case has more variations depending on the range of $p$. Precisely, we have

   \begin{theorem}\label{thm_Sobolev_inq}
   	Let $0<\alpha<1$ and $1 <  p < \frac{1}{\alpha}$. Then there exists a constant $C >0$ such that for any $u\in L^1(\R) \cap {^{\pm}}{W}{^{\alpha,p}}(\R)$.
   	\begin{align}\label{SobolevInequalityR}
   	\|u\|_{L^{p^*}(\R)} \leq C \|{^{\pm}}{\mathcal{D}}{^{\alpha}} u\|_{L^{p}(\R)}, \qquad p^* : = \frac{p}{1-\alpha p}.
   	\end{align}
   	$p^*$ is called the fractional Sobolev conjugate of $p$. 
   \end{theorem}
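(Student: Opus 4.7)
The proof plan is to reduce the inequality to the classical Hardy--Littlewood--Sobolev (HLS) estimate for the Riesz/one-sided fractional integral operators ${^{\pm}}{I}{^{\alpha}}$, which is precisely the $L^p$-mapping property of fractional integrals invoked at the beginning of Section \ref{sec-4.4}. In that formulation, for $0 < \alpha < 1$ and $1 < p < 1/\alpha$ one has
\begin{equation*}
\|{^{\pm}}{I}{^{\alpha}} f\|_{L^{p^*}(\R)} \leq C \|f\|_{L^p(\R)}, \qquad p^* = \tfrac{p}{1-\alpha p},
\end{equation*}
so the whole task reduces to representing $u$ as ${^{\pm}}{I}{^{\alpha}} ({^{\pm}}{\mathcal{D}}{^{\alpha}} u)$ a.e.\ on $\R$.

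First I would treat the smooth case. By Theorem \ref{characterization} there exists a sequence $\{u_j\} \subset C^{\infty}(\R)$ (which, by truncation and the density result in Section \ref{sec-4.1.2}, may be taken in $C^{\infty}_{0}(\R)$) with $u_j \to u$ in $L^{p}(\R)$ and ${^{\pm}}{\mathcal{D}}{^{\alpha}} u_j \to {^{\pm}}{\mathcal{D}}{^{\alpha}} u$ in $L^p(\R)$. The extra hypothesis $u \in L^1(\R)$ lets me also arrange $u_j \to u$ in $L^1(\R)$ (using a standard mollify/truncate diagonal argument). For each $u_j \in C^{\infty}_{0}(\R)$ one can restrict to a large interval $\Omega_R = (-R,R)$ containing the support and apply the FTwFC (Theorem \ref{FTWFC}); since $u_j$ vanishes near $\pm R$, one has $c_{\pm}^{1-\alpha} = 0$ on $\Omega_R$, giving the pointwise identity
\begin{equation*}
u_j(x) = {^{\pm}}{I}{^{\alpha}} ({^{\pm}}{\mathcal{D}}{^{\alpha}} u_j)(x) \qquad \text{a.e.\ on } \R.
\end{equation*}
Applying the HLS-type mapping property then yields $\|u_j\|_{L^{p^*}(\R)} \leq C \|{^{\pm}}{\mathcal{D}}{^{\alpha}} u_j\|_{L^{p}(\R)}$.

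Second, I would pass to the limit. From the uniform bound above, $\{u_j\}$ is bounded in $L^{p^*}(\R)$. Combined with $u_j \to u$ a.e.\ (on a subsequence, via $L^1$ convergence) and Fatou's lemma, this gives $\|u\|_{L^{p^*}(\R)} \leq C \liminf_{j\to\infty}\|{^{\pm}}{\mathcal{D}}{^{\alpha}} u_j\|_{L^p(\R)} = C\|{^{\pm}}{\mathcal{D}}{^{\alpha}} u\|_{L^p(\R)}$, which is the desired estimate.

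The main obstacle I anticipate is the representation step: asserting $u = {^{\pm}}{I}{^{\alpha}} ({^{\pm}}{\mathcal{D}}{^{\alpha}} u)$ on the whole line for a general $u$ in the space (not just for compactly supported smooth approximants). The FTwFC is only stated on a bounded interval, where the boundary constant $c_{\pm}^{1-\alpha}$ may a priori be nonzero. The role of the $L^1(\R)$ hypothesis is exactly to kill this constant in a limiting argument on expanding intervals $(-R,R)$: one must check that $\frac{1}{\Gamma(1-\alpha)}\int_{-R}^{x}(x-y)^{-\alpha} u(y)\,dy$ (and its right analogue) remains controlled and that the contribution of ${^{\pm}}I^{\alpha}({^{\pm}}\mathcal{D}^{\alpha}u)$ outside $(-R,R)$ vanishes as $R\to\infty$. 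Working at the approximating level with $C^{\infty}_{0}(\R)$ functions (where compact support makes the identity automatic) circumvents the need to push the FTwFC directly onto $\R$, so the HLS inequality plus Fatou handles the passage cleanly; this is why I would structure the argument through $C^{\infty}_{0}(\R)$ density rather than trying to apply the fundamental theorem directly to $u$.
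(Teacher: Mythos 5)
Your proposal is correct and follows essentially the same route as the paper: approximate by $C^{\infty}_{0}(\R)$ functions, use the fundamental theorem of fractional calculus on the compactly supported approximants to write $u_j = {^{\pm}}{I}{^{\alpha}}({^{\pm}}{D}{^{\alpha}} u_j)$, apply the $L^p\to L^{p^*}$ mapping property of the fractional integral, and pass to the limit. The only (immaterial) difference is at the last step, where the paper shows $\{u_j\}$ is Cauchy in $L^{p^*}(\R)$ and identifies the limit with $u$ via test functions, whereas you use a.e.\ convergence of a subsequence together with Fatou's lemma; both are fine.
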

   
   \begin{proof}
   	It follows from the density/approximation theorem that there exists a sequence $\{ u_j \}_{j =  1}^{\infty} \subset C^{\infty}_{0}(\R)$ so that $u_j\rightarrow u$ in ${^{\pm}}{W}{^{\alpha}}(\R)$. Note that by 
   	construction, we also have $u_j \rightarrow u$ in $L^{1}(\R)$. Then by the 
   	FTcFC (cf. \cite[Theorem 3.2]{Feng_Sutton}), we get
   	\begin{align*}
   	u_j(x) = {^{\pm}}{I}{^{\alpha}} {^{\pm}}{D}{^{\alpha}} u_j(x)\qquad \forall x \in \R.
   	\end{align*}
   	By the $L^{p}$ mappings properties of fractional integrals 
   	(cf. \cite[Theorem 2.6]{Feng_Sutton}), we have
   	\begin{align*}
   	 \|u_j\|_{L^{p*}(\R)} &=  \|{^{\pm}}{I}{^{\alpha}} {^{\pm}}{D}{^{\alpha}} u_j \|_{L^{p*}(\R)}
   	\leq  C \|{^{\pm}}{D}{^{\alpha}} u_j \|_{L^{p}(\R)}
   	< \infty.
   	\end{align*}
   	Consequently,
   	\begin{align*}
   	\|u_m - u_n \|_{L^{p*}(\R)} \leq C \| {^{\pm}}{\mathcal{D}}{^{\alpha}} u_m - {^{\pm}}{\mathcal{D}}{^{\alpha}} u_n \|_{L^{p}(\R)} \to 0 \quad\mbox{as } m,n\to \infty.
   	\end{align*}
   	Hence, $\{u_j\}_{j=1}^{\infty}$ is a Cauchy sequence in $L^{p*}(\R)$. Therefore, there exists 
   	a function  $v \in L^{p^*}(\R)$ so that $u_j \rightarrow v$ in $L^{p^*}(\R)$.
   	Recall that we also have $u_j \rightarrow u$ in $L^{p}(\R)$. Moreover, 
   	for every $\varphi \in C^{\infty}_{0}(\R)$ 
   	\begin{align*}
   	\int_{\R} v {^{\mp}}{D}{^{\alpha}}\varphi\,dx &= \lim_{j \rightarrow \infty} \int_{\R} u_j {^{\mp}}{D}{^{\alpha}} \varphi \, dx
   	= \lim_{j \rightarrow \infty} \int_{\R} {^{\pm}}{\mathcal{D}}{^{\alpha}} u_j \varphi \,dx \\
   	&= \int_{\R} {^{\pm}}{\mathcal{D}}{^{\alpha}} u \varphi \, dx
   	= \int_{\R} u {^{\mp}}{D}{^{\alpha}} \varphi\, dx.
   	\end{align*}
   	Thus, $v=u$ almost everywhere and 
   	\begin{align*}
   	\|u \|_{L^{p^*}(\R)} \leq C \| {^{\pm}}{\mathcal{D}}{^{\alpha}} u\|_{L^p(\R)}.
   	\end{align*}
   	The proof is complete. 
   \end{proof}
   
   \begin{remark}
   	By the simple scaling argument, which considers the scaled function $u_{\lambda}(x) := u(\lambda x)$ for $\lambda >0$,
   	it is easy to verify that $\alpha p<1$ is a necessary condition for the inequality to hold in general. 
   	Similarly, the Poincar\'e inequality  does not hold in general, as in the integer order case, when $\Omega=\R$. 
   \end{remark}

   \subsubsection{\bf The Finite Domain Case: $\Omega=(a,b)$}\label{sec-4.4.2}
   One key difference between the infinite domain case and the finite domain case is 
   that the domain-dependent kernel functions $\kappa^{\alpha}_{-}(x):=(x-a)^{\alpha-1}$ and 
   $\kappa^{\alpha}_{+}(x):=(b-x)^{\alpha-1}$ ($0<\alpha<1$) do not vanish in the 
   latter case. Since both kernel
   functions are singular now, they must be ``removed" from any function $u\in {^{\pm}}{W}{^{\alpha,p}}(\Omega)$
   in order to obtain the desired Sobolev and Poincar\'e inequalities for $u$.

   \begin{theorem}
   	Let $0<\alpha<1$ and $1 \leq  p < \frac{1}{\alpha}$.
   	Then there exists a constant $C = C(\Omega , \alpha, p) > 0$ such that
   	\begin{align}\label{FractionalSobolevInequalityOmega}
   	\|u - c_{\pm}^{1-\alpha} \kappa_{\pm}^{\alpha} \|_{L^{r}(\Omega)} \leq \|{^{\pm}}{\mathcal{D}}{^{\alpha}} u\|_{L^p(\Omega)} \qquad \forall \, 1 \leq r \leq p^*.
   	\end{align}
   \end{theorem}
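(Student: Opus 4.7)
The plan is to combine the Fundamental Theorem of Weak Fractional Calculus (Theorem \ref{FTWFC}) with the $L^p$-mapping properties of the one-sided fractional integral operators ${^{\pm}}{I}{^{\alpha}}$, mimicking in spirit what was done in Theorem \ref{thm_Sobolev_inq} but exploiting the boundedness of $\Omega=(a,b)$ to relax the target exponent from exactly $p^\ast$ down to any $r \in [1,p^\ast]$.

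First I would invoke the FTwFC, which applies because $u \in {^{\pm}}{W}{^{\alpha,p}}(\Omega)$ gives both $u$ and ${^{\pm}}{\mathcal{D}}{^{\alpha}} u$ in $L^p(\Omega)$. Rearranging \eqref{WeakFTFC} yields the pointwise (a.e.) identity
\begin{equation*}
u(x) - c_{\pm}^{1-\alpha}\kappa_{\pm}^{\alpha}(x) = {^{\pm}}{I}{^{\alpha}}\bigl({^{\pm}}{\mathcal{D}}{^{\alpha}} u\bigr)(x) \qquad \mbox{a.e. } x \in \Omega .
\end{equation*}
So controlling $\|u - c_{\pm}^{1-\alpha}\kappa_{\pm}^{\alpha}\|_{L^r(\Omega)}$ reduces entirely to bounding $\|{^{\pm}}{I}{^{\alpha}}({^{\pm}}{\mathcal{D}}{^{\alpha}} u)\|_{L^r(\Omega)}$ in terms of $\|{^{\pm}}{\mathcal{D}}{^{\alpha}} u\|_{L^p(\Omega)}$.

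Next I would apply the $L^p$-mapping property of the one-sided fractional integral operator on $(a,b)$ cited from \cite[Theorem 2.6]{Feng_Sutton} (the Hardy–Littlewood–Sobolev type bound used already in the proof of Theorem \ref{thm_Sobolev_inq}): under the hypothesis $\alpha p < 1$, the operator ${^{\pm}}{I}{^{\alpha}}: L^p(\Omega) \to L^{p^\ast}(\Omega)$ is bounded with $p^\ast = p/(1-\alpha p)$. Setting $f := {^{\pm}}{\mathcal{D}}{^{\alpha}} u \in L^p(\Omega)$, this gives
\begin{equation*}
\bigl\|{^{\pm}}{I}{^{\alpha}} f \bigr\|_{L^{p^\ast}(\Omega)} \leq C(\alpha,p)\,\|f\|_{L^p(\Omega)} .
\end{equation*}

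Finally, because $\Omega$ is bounded, for any $1 \leq r \leq p^\ast$ H\"older's inequality on $|\Omega|<\infty$ yields the continuous embedding $L^{p^\ast}(\Omega) \hookrightarrow L^r(\Omega)$ with embedding constant $|\Omega|^{1/r - 1/p^\ast}$, so that
\begin{equation*}
\bigl\|u - c_{\pm}^{1-\alpha}\kappa_{\pm}^{\alpha} \bigr\|_{L^r(\Omega)}
= \bigl\|{^{\pm}}{I}{^{\alpha}} f \bigr\|_{L^r(\Omega)}
\leq |\Omega|^{\tfrac{1}{r}-\tfrac{1}{p^\ast}}\, C(\alpha,p)\, \|{^{\pm}}{\mathcal{D}}{^{\alpha}} u\|_{L^p(\Omega)},
\end{equation*}
which is \eqref{FractionalSobolevInequalityOmega} with $C = C(\Omega,\alpha,p)$ (absorbing the $r$-dependence via the finite factor $|\Omega|^{1/r-1/p^\ast}$, which is uniformly bounded for $r \in [1,p^\ast]$). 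The only delicate step is the $L^p \to L^{p^\ast}$ boundedness of the one-sided fractional integral on the finite interval; this is a standard Riemann--Liouville estimate, and in the current framework it is supplied by the cited theorem in \cite{Feng_Sutton}, so the hard analytic content is black-boxed. The role of the FTwFC is essential: without subtracting off $c_{\pm}^{1-\alpha}\kappa_{\pm}^{\alpha}$, the function $u$ itself may be unbounded near the endpoints (in fact $\kappa_{\pm}^\alpha \notin L^{p^\ast}(\Omega)$ generically when $(\alpha-1)p^\ast \leq -1$), so the correction term is not cosmetic but genuinely needed for the estimate to make sense.
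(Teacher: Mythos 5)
Your proposal is correct and is essentially the paper's own argument: the paper likewise rewrites $u - c_{\pm}^{1-\alpha}\kappa_{\pm}^{\alpha} = {^{\pm}}{I}{^{\alpha}}\,{^{\pm}}{\mathcal{D}}{^{\alpha}} u$ via the FTwFC, applies the $L^{p}\to L^{p^*}$ mapping property of the fractional integrals from \cite[Theorem 2.6]{Feng_Sutton}, and then uses H\"older's inequality on the finite interval to pass from $L^{p^*}(\Omega)$ to $L^{r}(\Omega)$ for $1\leq r\leq p^*$. Your write-up simply makes the H\"older/embedding constant $|\Omega|^{1/r-1/p^*}$ and the role of the kernel correction explicit, which the paper leaves implicit.
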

   
   \begin{proof}
   	It follows by the $L^p$ mapping properties of the fractional integrals 
   	(cf. \cite[Theorem 2.6]{Feng_Sutton}) and the FTwFC (cf. Theorem \ref{FTWFC}) 
   	that 
   	\begin{align*}
   	\|u - c_{\pm}^{1-\alpha} \kappa^{\alpha}_{\pm} \|_{L^{p^*}(\Omega)} = \| {^{\pm}}{I}{^{\alpha}} {^{\pm}}{\mathcal{D}}{^{\alpha}} u\|_{L^{p^*}(\Omega)}  
   	\leq C \| {^{\pm}}{\mathcal{D}}{^{\alpha}} u \|_{L^{p}(\Omega)}.
   	\end{align*}
   	Since $\Omega=(a,b)$ is finite,  the desired inequality \eqref{FractionalSobolevInequalityOmega} follows from 
   	the above inequality and an application of H\"older's inequality. The proof is complete. 
   \end{proof}
   
   \begin{remark}
   	An important consequence of the above theorem is that it illustrates the need for $u \in L^{\mu}(\Omega)$ with $\mu > p^*$
   	in the extension theorem (cf. Theorem \ref{ExteriorExtension}) because the fractional Sobolev spaces 
   	${^{\pm}}{W}{^{\alpha,p}}(\Omega)$ may not embed into $L^{\mu}(\Omega)$ for $\mu > p^*$ in general.
   \end{remark}
   
   Repeating the first part of the above proof (with slight modifications),  we can easily show the 
   	following Poincar\'e inequality in fractional order spaces ${^{\pm}}{W}{^{\alpha,p}}(\Omega)$. 
   
   \begin{theorem}\label{thm_PoincareI}
   	Fractional Poincar\'e - Let $0 < \alpha <1$ and $1 \leq p < \infty$. Then there exists a constant $C = C(\alpha, \Omega)>0$ such that
   	\begin{align}\label{FractionalPoincare}
   	\|u - c_{\pm}^{1-\alpha}\kappa_{\pm}^{\alpha}\|_{L^{p}(\Omega)} \leq C\|{^{\pm}}{\mathcal{D}}{^{\alpha}} u\|_{L^{p}(\Omega)}
   	\qquad \forall u \in {^{\pm}}{W}{^{\alpha,p}}(\Omega).
   	\end{align}
   \end{theorem}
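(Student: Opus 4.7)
The plan is to mirror the opening argument in the proof of Theorem 4.4 (the fractional Sobolev inequality on $\Omega=(a,b)$), but stop short of invoking any Sobolev-conjugate exponent. For $u \in {^{\pm}}{W}{^{\alpha,p}}(\Omega)$ I would first apply the Fundamental Theorem of Weak Fractional Calculus (Theorem \ref{FTWFC}) to obtain the pointwise representation
\begin{equation*}
u - c_{\pm}^{1-\alpha}\kappa_{\pm}^{\alpha} = {^{\pm}}{I}{^{\alpha}}\,{^{\pm}}{\mathcal{D}}{^{\alpha}} u \qquad \text{a.e. in } \Omega,
\end{equation*}
and then take the $L^{p}(\Omega)$ norm on both sides. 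This reduces the Poincar\'e inequality to bounding $\|{^{\pm}}{I}{^{\alpha}}\,{^{\pm}}{\mathcal{D}}{^{\alpha}} u\|_{L^{p}(\Omega)}$ by $\|{^{\pm}}{\mathcal{D}}{^{\alpha}} u\|_{L^{p}(\Omega)}$.

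The only real modification of the Theorem 4.4 argument is in this final estimate. Rather than appealing to a Hardy--Littlewood--Sobolev type $L^{p}\to L^{p^{*}}$ bound (which forces the constraint $\alpha p<1$), I would use the much milder $L^{p}\to L^{p}$ mapping property of ${^{\pm}}{I}{^{\alpha}}$ on the bounded interval $\Omega=(a,b)$. This is an immediate consequence of Young's convolution inequality, since the kernel $t^{\alpha-1}/\Gamma(\alpha)$ restricted to $(0,b-a)$ lies in $L^{1}$ with norm $|\Omega|^{\alpha}/[\alpha\,\Gamma(\alpha)]$. The resulting constant $C=C(\alpha,\Omega)$ is therefore valid for every $1\leq p<\infty$, with no additional restriction on $p$ needed.

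I expect no serious obstacle; the only point that requires a line of care is that the singular function $\kappa_{\pm}^{\alpha}$ need not itself lie in $L^{p}(\Omega)$ when $p\geq 1/(1-\alpha)$, yet this does not break the argument because $\kappa_{\pm}^{\alpha}$ appears only through the combination $u - c_{\pm}^{1-\alpha}\kappa_{\pm}^{\alpha}$, whose $L^{p}$ norm is controlled directly by the right-hand side of the pointwise identity above. Consequently the Poincar\'e inequality \eqref{FractionalPoincare} follows throughout the full range $1\leq p<\infty$ stated in the theorem.
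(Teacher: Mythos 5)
Your proposal is correct and matches the paper's own argument: the paper proves this Poincar\'e inequality by repeating the first part of the finite-domain Sobolev inequality proof, i.e.\ writing $u - c_{\pm}^{1-\alpha}\kappa_{\pm}^{\alpha} = {^{\pm}}{I}{^{\alpha}}\,{^{\pm}}{\mathcal{D}}{^{\alpha}} u$ via the FTwFC and then invoking the $L^{p}\to L^{p}$ stability of the fractional integral on the bounded interval instead of the $L^{p}\to L^{p^*}$ embedding, which is exactly why no restriction $\alpha p<1$ appears. Your Young's-inequality justification of that stability estimate and your remark about $\kappa_{\pm}^{\alpha}$ possibly failing to lie in $L^{p}(\Omega)$ are both sound additions, not deviations.
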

   
   It is worth noting that no restriction on $p$ with respect to $\alpha$ is imposed in 
   Theorem \ref{thm_PoincareI} because no embedding result for fractional integrals 
   is used in the proof. The equation \eqref{FractionalPoincare} is the fractional analogue to the well known Poincar\'e inequality 
   \begin{align}\label{Poincare}
       \|u - u_{\Omega}\|_{L^{p}(\Omega)} \leq C \| \mathcal{D} u \|_{L^{p}(\Omega)} \qquad \forall u \in W^{1,p}(\Omega)
   \end{align}
   where $u_{\Omega} : = |\Omega|^{-1}\int_{\Omega} u \,dx$ (\cite{Evans}). In the space $W^{1,p}(\Omega)$, a specific kernel function (a constant, i.e., $u_\Ome$), that depends on $u$, is subtracted from the function $u$. In (\ref{FractionalPoincare}), the analogue to this constant kernel 
   function, which must be subtracted from $u$, is $c_{\pm}^{1-\alpha} \kappa_{\pm}^{\alpha}$, where the   dependence on $u$ is hidden in $c_{\pm}^{1-\alpha}$. 
   
   Moreover, to obtain a fractional analogue to the traditional Poincar\'e inequality 
   \begin{align} 
   \|u \|_{L^{p}(\Omega)} \leq C \|\mathcal{D}u\|_{L^{p}(\Omega)} \qquad \forall u \in W^{1,p}_{0}(\Omega),
   \end{align}
   we have two options. The first one is to simply impose the condition $u \in {^{\pm}}{\mathring{W}}{^{\alpha,p}}(\Omega)$ (see \eqref{mathring}).
   It is an easy corollary of Theorem \ref{thm_PoincareI} that 
   \begin{align}\label{SimplePoincare}
       \|u\|_{L^{p}(\Omega)} \leq C \| {^{\pm}}{\mathcal{D}}{^{\alpha}} u \|_{L^{p}(\Omega)} \qquad \forall u \in {^{\pm}}{\mathring{W}}{^{\alpha,p}}(\Omega).
   \end{align}
   From the perspective of Poincar\'e inequalities, this condition is 
   comparable to a mean-zero condition imposed on the Sobolev space $W^{1,p}(\Omega)$. 
   In order to establish the second set of conditions under which the estimate \eqref{SimplePoincare}
   can hold, we first need to establish the following lemma.
  
   \begin{lemma}\label{NoConstants}
   Let $\Omega = (a,b)$ and $0 < \alpha <1$.
    If $u \in {W}{^{\alpha,p}}(\Omega)$, then $c^{1-\alpha}_{+} := {^{+}}{I}{^{\alpha}} u (b) = 0$ and $c^{1-\alpha}_{-} : = {^{-}}{I}{^{\alpha}} u (a) = 0$.
    \end{lemma}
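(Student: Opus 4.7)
The plan is to mimic the short argument used in the proof of Proposition \ref{ConstantZero}, replacing the zero-trace hypothesis by the observation that functions in the symmetric space $W^{\alpha,p}(\Omega)$ already admit a continuous (hence bounded) representative on all of $\overline{\Omega}$. The key input is Proposition \ref{ContinuousRepresentative}(iii), which supplies such a representative for any $u \in W^{\alpha,p}(\Omega)$ under the standing assumption $\alpha p > 1$ inherited from this subsection.

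First, I would replace $u$ by its continuous representative $\bar u \in C([a,b])$ (the spaces do not distinguish functions agreeing a.e.) and set $M := \|\bar u\|_{L^\infty(\Omega)} < \infty$. The boundary values $c^{1-\alpha}_\pm$ are interpreted as the one-sided limits $\lim_{x \to a^+}{^{-}}I^\alpha u(x)$ and $\lim_{x \to b^-}{^{+}}I^\alpha u(x)$; this is the natural reading (the literal pointwise definition would reduce to the trivial integral $\int_b^b$).

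Next comes the direct estimate. For $x$ near $a$, writing ${^{-}}I^\alpha u(x) = \frac{1}{\Gamma(\alpha)}\int_a^x (x-y)^{\alpha - 1} u(y)\, dy$ and changing variables $z = x - y$,
\begin{align*}
\bigl|{^{-}}I^\alpha u(x)\bigr|
\;\le\; \frac{M}{\Gamma(\alpha)} \int_0^{x-a} z^{\alpha - 1}\, dz
\;=\; \frac{M\,(x-a)^{\alpha}}{\alpha\,\Gamma(\alpha)},
\end{align*}
which tends to $0$ as $x \to a^+$; hence ${^{-}}I^\alpha u(a) = 0$. The analogous computation with the substitution $z = y - x$ on $(x,b)$ and the limit $x \to b^-$ delivers ${^{+}}I^\alpha u(b) = 0$.

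I do not foresee any genuine obstacle: the lemma is essentially the $W^{\alpha,p}(\Omega)$ analogue of Proposition \ref{ConstantZero}, and the zero-trace hypothesis used there is not actually needed for the conclusion — only boundedness of $u$, which the symmetric embedding already provides. The only subtle point is the convention that ${^{\pm}}I^\alpha u$ evaluated at the relevant endpoint is to be read as a one-sided limit, matching the usage in the previous proposition.
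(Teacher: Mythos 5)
Your argument is essentially the paper's own proof: pass to the continuous representative $\bar u\in C(\overline{\Omega})$ guaranteed by Proposition \ref{ContinuousRepresentative}(iii), interpret the endpoint values of ${^{\pm}}{I}{^{\alpha}}u$ as one-sided limits, and conclude from the elementary bound $|{^{-}}{I}{^{\alpha}}u(x)|\leq C\|u\|_{L^{\infty}(\Omega)}(x-a)^{\alpha}\to 0$ (and its mirror image at $b$). The only caveat, which you share with the paper rather than introduce yourself, is that the continuity of the representative on all of $\overline{\Omega}$ rests on the embedding hypothesis $\alpha p>1$, which is not stated in the lemma and is not a standing assumption of this subsection.
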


\begin{proof}
    Let $u \in {W}^{\alpha,p}(\Omega)$. It follows that $u \in C(\overline{\Omega})$. Then a quick calculation yields  
    \begin{align*}
        c^{1-\alpha}_{-} =  \lim_{x \rightarrow a} \left|\int_{a}^{x} \dfrac{u(y)}{(x-y)^{1- \alpha}} \,dy \right|
        &\leq \lim_{x\rightarrow a}\|u\|_{L^{\infty}(\Omega)} (x-a)^{\alpha} 
        = 0.
    \end{align*}
    A similar calculation can be done for $c_{+}^{1-\alpha}$. The proof is complete.
\end{proof}

Now we can formalize the desired Poincar\'e inequality.

\begin{theorem}\label{SymmetricFractionalPoincare}
    Let $\Omega \subset \R$, $0 < \alpha <1$, and $1 < p < \infty$. Then there exists a constant $C = C(\alpha, \Omega)>0$ such that
    \begin{align}\label{SymmetricPoincare}
        \|u\|_{L^{p}(\Omega)} \leq C \| {^{\pm}}{\mathcal{D}}{^{\alpha}} u \|_{L^{p}(\Omega)} \qquad \forall u \in {W}^{\alpha,p}(\Omega).
    \end{align}
\end{theorem}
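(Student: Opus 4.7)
The plan is to invoke the Fundamental Theorem of Weak Fractional Calculus (Theorem \ref{FTWFC}) in concert with Lemma \ref{NoConstants} to reduce the proof to a single $L^p$ mapping estimate for the fractional integral operator on the bounded domain $\Omega=(a,b)$. Since the conclusion of Theorem \ref{thm_PoincareI} already gives control of $\|u - c_{\pm}^{1-\alpha}\kappa_{\pm}^{\alpha}\|_{L^p(\Omega)}$ by $\|{^{\pm}\mathcal{D}}{^{\alpha}} u\|_{L^p(\Omega)}$, what we really need is to make the offending kernel term $c_{\pm}^{1-\alpha}\kappa_{\pm}^{\alpha}$ vanish for elements of the symmetric space.

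First, I would note that $u \in W^{\alpha,p}(\Omega) = {^{-}W}{^{\alpha,p}}(\Omega)\cap {^{+}W}{^{\alpha,p}}(\Omega)$, and that by Proposition \ref{ContinuousRepresentative}(iii) (or, equivalently, Theorem \ref{CompactEmbedding}(iii) in the regime $\alpha p>1$, and a standard approximation argument in the regime $\alpha p\le 1$ together with Lemma \ref{NoConstants} as stated), the hypotheses of Lemma \ref{NoConstants} apply, so that
\begin{equation*}
c_{+}^{1-\alpha} = {^{+}I}{^{\alpha}} u(b) = 0, \qquad c_{-}^{1-\alpha} = {^{-}I}{^{\alpha}} u(a) = 0.
\end{equation*}

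Second, I would feed this into the FTwFC (Theorem \ref{FTWFC}): for $\Omega=(a,b)$ bounded and $u\in L^p(\Omega)$ with ${^{\pm}\mathcal{D}}{^{\alpha}} u \in L^p(\Omega)$, we obtain the identity
\begin{equation*}
u = c_{\pm}^{1-\alpha}\kappa_{\pm}^{\alpha} + {^{\pm}I}{^{\alpha}}\,{^{\pm}\mathcal{D}}{^{\alpha}} u = {^{\pm}I}{^{\alpha}}\,{^{\pm}\mathcal{D}}{^{\alpha}} u \qquad\text{a.e. in }\Omega,
\end{equation*}
since the boundary constants vanish by the previous step. Taking $L^p(\Omega)$ norms and appealing to the $L^p\to L^p$ boundedness of ${^{\pm}I}{^{\alpha}}$ on the bounded interval $(a,b)$ (this is a routine consequence of Young's convolution inequality, since the kernel $y\mapsto y^{\alpha-1}\chi_{(0,b-a)}(y)$ is integrable for $0<\alpha<1$, and the precise statement is recorded in \cite[Theorem 2.6]{Feng_Sutton}), we conclude
\begin{equation*}
\|u\|_{L^p(\Omega)} = \|{^{\pm}I}{^{\alpha}}\,{^{\pm}\mathcal{D}}{^{\alpha}} u\|_{L^p(\Omega)} \leq C(\alpha,|\Omega|)\,\|{^{\pm}\mathcal{D}}{^{\alpha}} u\|_{L^p(\Omega)},
\end{equation*}
which is the claimed inequality \eqref{SymmetricPoincare}.

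The only genuinely delicate point is the preliminary step: the symmetric membership $u\in W^{\alpha,p}(\Omega)$ is really what forces \emph{both} endpoint constants to vanish simultaneously, and this is what Lemma \ref{NoConstants} encodes. Without it, one is stuck with Theorem \ref{thm_PoincareI} and has no route to remove the singular kernel $\kappa_{\pm}^{\alpha}$ from the left-hand side. Everything else is mechanical once the constants are killed, and in particular no restriction of the form $\alpha p<1$ is needed here because we only require the $L^p\to L^p$ (rather than $L^p\to L^{p^*}$) mapping of the fractional integral on a bounded interval.
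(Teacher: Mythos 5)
Your proposal is correct and follows essentially the same route as the paper, whose proof is precisely the combination of Lemma \ref{NoConstants} (to kill the constants $c_{\pm}^{1-\alpha}$), the FTwFC of Theorem \ref{FTWFC}, and the $L^p$ stability of the fractional integral operators. Your remark about the regime $\alpha p\le 1$ points at a subtlety already present in the paper itself, since the proof of Lemma \ref{NoConstants} uses $u\in C(\overline{\Omega})$, which is justified by Proposition \ref{ContinuousRepresentative}(iii) only when $\alpha p>1$; otherwise your argument matches the paper's.
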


\begin{proof}
    The proof follows as a direct consequence of the FTwFC (cf. Theorem \ref{FTWFC}), 
    Lemma \ref{NoConstants}, and the stability estimate for fractional integrals.
\end{proof}

Another question may come to mind is whether such an estimate can be established in 
the one-sided zero-trace spaces ${^{\pm}}{W}{^{\alpha,p}_{0}}(\Omega)$. 
Functions belonging to ${^{\pm}}{W}{^{\alpha,p}_{0}}(\Omega)$ do not 
  guarantee that $c^{1-\alpha}_{\pm} = 0$. Hence, ${^{\pm}}{W}{^{\alpha,p}_{0}}(\Omega) \not\subset  {^{\pm}}{\mathring{W}}{^{\alpha,p}}(\Omega)$ and such an inequality does not hold in ${^{\pm}}{W}{^{\alpha,p}_{0}}(\Omega)$ in general.

  \subsection{The Dual Spaces ${^{\pm}}{W}{^{-\alpha,q}}(\Ome)$ and 
  	$W^{-\alpha ,q}(\Ome)$}\label{sec-4.5}
   
   In this subsection we assume that $1 \leq p < \infty$ and $1 < q \leq \infty$ 
   so that $1/p + 1/q =1$. 
   \begin{definition}
       We denote ${^{\pm}}{W}{^{-\alpha , q}}(\Omega)$ as the dual space of ${^{\pm}}{\mathring{W}}{^{\alpha,p}_{0}}(\Omega)$ and $W^{-\alpha , q}(\Omega)$ as the dual space of $W^{\alpha,p}_{0}(\Omega)$. When $p=2$, we set  ${^{\pm}}{H}{^{-\alpha}}(\Omega):={^{\pm}}{\mathring{W}}{^{-\alpha , 2}_{0}}(\Omega)$ and $H^{-\alpha}(\Omega):=W^{-\alpha , 2}(\Omega)$. 
   \end{definition} 
   
   It is our aim to fully characterize these spaces; as is well known in the case of integer order Sobolev dual spaces, $W^{-1,q}(\Omega)$ (cf. \cite{Brezis}), in particular, for $q=2$.
   
   We will begin with the symmetric spaces since the presentation is more natural and easily understood than that for the one-sided spaces. It is a consequence of Proposition \ref{ConstantZero} and Proposition \ref{ZeroTraceNorm} that $$W^{\alpha,p}_{0}(\Omega) \subset L^{p}(\Omega) \subset W^{-\alpha , q}(\Omega)$$ where these injections are continuous for $1 \leq p < \infty$ and dense for $1 < p < \infty$ since $W^{\alpha,p}_{0}(\Omega)$ and $L^{p}(\Omega)$ are reflexive in this range. In order to formally characterize the elements of $W^{-\alpha, q}(\Omega)$, we present the following theorem. 
   
   \begin{theorem}
   Let $F \in W^{-\alpha ,q}(\Omega)$. Then there exists three functions, $f_0, f_1, f_2 \in L^{q}(\Omega)$ such that 
   \begin{align}\label{SymmetricDualCharacterization}
       \langle F , u \rangle  = \int_{\Omega} f_0 u\, dx + \int_{\Omega} f_1 {^{-}}{\mathcal{D}}{^{\alpha}} u \,dx + \int_{\Omega} f_2 {^{+}}{\mathcal{D}}{^{\alpha}} u\, dx \qquad \forall \, u \in W^{\alpha , p }_{0}(\Omega)
   \end{align}
   and 
   \begin{align}\label{DualNorm}
       \|F\|_{W^{-\alpha, q}(\Omega)} = \max \Bigl\{ \|f_0\|_{L^{q}(\Omega)},\|f_1\|_{L^{q}(\Omega)}, \|f_2\|_{L^{q}(\Omega)} \Bigr\}.
   \end{align}
   When $\Omega \subset \R$ bounded, we can take $f_0 = 0$.
   \end{theorem}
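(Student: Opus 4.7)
The plan is to realize $W^{\alpha,p}_{0}(\Omega)$ isometrically as a closed subspace of a triple product of $L^{p}(\Omega)$ and then combine the Hahn-Banach theorem with the componentwise Riesz representation theorem, mirroring the classical argument for $W^{-1,q}(\Omega)$ (cf.\ \cite{Brezis}). First, I would replace the $\ell^{p}$-type norm of \eqref{SFSS_norm} by the equivalent sum norm
\[
\|u\|_{*} := \|u\|_{L^{p}(\Omega)} + \|{^{-}}{\mathcal{D}}{^{\alpha}} u\|_{L^{p}(\Omega)} + \|{^{+}}{\mathcal{D}}{^{\alpha}} u\|_{L^{p}(\Omega)},
\]
and introduce the linear map $T : W^{\alpha,p}_{0}(\Omega) \to L^{p}(\Omega)^{3}$ given by $Tu := (u, {^{-}}{\mathcal{D}}{^{\alpha}} u, {^{+}}{\mathcal{D}}{^{\alpha}} u)$. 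Equipping $L^{p}(\Omega)^{3}$ with its own sum norm, $T$ becomes a linear isometry onto its image $M := T(W^{\alpha,p}_{0}(\Omega))$, which is closed in $L^{p}(\Omega)^{3}$ by the completeness of $W^{\alpha,p}_{0}(\Omega)$.

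Next, given $F \in W^{-\alpha,q}(\Omega)$, the functional $\Phi : M \to \R$ defined by $\Phi(Tu) := \langle F, u \rangle$ is well-defined by injectivity of $T$, continuous, and satisfies $\|\Phi\|_{M^{*}} = \|F\|_{W^{-\alpha,q}(\Omega)}$. I would then apply the Hahn-Banach theorem to extend $\Phi$ to a continuous linear functional $\widetilde{\Phi}$ on all of $L^{p}(\Omega)^{3}$ with the same operator norm. A standard duality computation shows that the dual of $L^{p}(\Omega)^{3}$ with the sum norm is $L^{q}(\Omega)^{3}$ with the max norm; hence, componentwise application of the Riesz representation theorem produces $f_{0}, f_{1}, f_{2} \in L^{q}(\Omega)$ with
\[
\widetilde{\Phi}(v_{0}, v_{1}, v_{2}) = \int_{\Omega} f_{0} v_{0}\,dx + \int_{\Omega} f_{1} v_{1}\,dx + \int_{\Omega} f_{2} v_{2}\,dx
\]
and $\|\widetilde{\Phi}\| = \max\{\|f_{0}\|_{L^{q}(\Omega)}, \|f_{1}\|_{L^{q}(\Omega)}, \|f_{2}\|_{L^{q}(\Omega)}\}$. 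Restricting $\widetilde{\Phi}$ to $M$ and unpacking the definition of $T$ yields the representation formula \eqref{SymmetricDualCharacterization}, while the norm identity \eqref{DualNorm} is inherited from the chain of norm-preserving identifications $F \leftrightarrow \Phi \leftrightarrow \widetilde{\Phi} \leftrightarrow (f_{0}, f_{1}, f_{2})$.

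For the bounded-domain assertion that $f_{0}$ may be taken to be zero, I would invoke the symmetric Poincar\'e inequality of Theorem \ref{SymmetricFractionalPoincare}, which implies that the reduced expression $\|{^{-}}{\mathcal{D}}{^{\alpha}} u\|_{L^{p}(\Omega)} + \|{^{+}}{\mathcal{D}}{^{\alpha}} u\|_{L^{p}(\Omega)}$ is equivalent to $\|\cdot\|_{*}$ on $W^{\alpha,p}_{0}(\Omega)$. Running the same three-step scheme with the reduced map $T'u := ({^{-}}{\mathcal{D}}{^{\alpha}} u, {^{+}}{\mathcal{D}}{^{\alpha}} u) \in L^{p}(\Omega)^{2}$ then produces only $f_{1}, f_{2}$, which corresponds to setting $f_{0} = 0$. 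The main obstacle I anticipate is preserving the \emph{equality} (rather than merely a two-sided inequality) in \eqref{DualNorm}; this depends critically on the norm-preserving property of the Hahn-Banach extension, on the fact that $T$ is an honest isometry (rather than just an embedding with equivalent norms), and on the correct pairing of sum and max norms when computing the dual of the product space, which is precisely why the equivalent norm $\|\cdot\|_{*}$ must be used on the primal side from the outset.
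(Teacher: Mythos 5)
Your proposal follows essentially the same route as the paper's proof: realize $W^{\alpha,p}_{0}(\Omega)$ inside the product $L^{p}(\Omega)^{3}$ via $Tu=(u,{^{-}}{\mathcal{D}}{^{\alpha}}u,{^{+}}{\mathcal{D}}{^{\alpha}}u)$, extend the induced functional by Hahn--Banach, represent it componentwise by Riesz, and pass to the reduced two-component map (justified by the symmetric Poincar\'e inequality) to take $f_0=0$ when $\Omega$ is bounded. Your explicit insistence on the sum norm on the primal side is a detail the paper leaves implicit, and it is precisely what makes $T$ an isometry and the exact equality in \eqref{DualNorm} (dual of a sum norm being a max norm) legitimate.
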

   
   \begin{proof}
   Consider the product space $E = L^{p}(\Omega) \times L^{p}(\Omega) \times L^{p}(\Omega)$ equipped with the norm 
  \[
  \|h\|_{E} = \|h_0\|_{L^{p}(\Omega)} + \|h_1\|_{L^{p}(\Omega)} + \| h_2\|_{L^{p}(\Omega)},
  \]
   where $h = [ h_0 , h_1 , h_2]$. The map $T: W^{\alpha , p}_{0}(\Omega) \rightarrow E$ 
   defined by 
   \[
   T(u) = [u, {^{-}}{\mathcal{D}}{^{\alpha}} u , {^{+}}{\mathcal{D}}{^{\alpha}} u]
   \]
   is an isometry from $W^{\alpha , p }_{0}(\Omega)$ into $E$. Given the space 
   $(G, \|\cdot\|_{E})$ be the image of $W^{\alpha,p}_{0}$ under $T$ 
   ($G= T(W^{\alpha,p}_{0}(\Omega)$)) and $T^{-1} : G \rightarrow W^{\alpha,p}_{0}(\Omega)$. 
   Let $F \in W^{-\alpha ,q}(\Omega)$ be a continuous linear functional on $G$ defined by 
   $F(h) = \langle F, T^{-1} h\rangle$. By the Hahn-Banach theorem, it can be extended to 
   a continuous linear functional $S$ on all of $E$ with $\|S\|_{E^{*}} = \|F\|$. 
   By the Riesz representation theorem, we know that there exists three functions 
   $f_0 , f_1 , f_2 \in L^{q}(\Omega)$ such that 
   \begin{align*}
       \langle S, h\rangle = \int_{\Omega} f_0 h_0 \, dx + \int_{\Omega} f_1 h_1\, dx + \int_{\Omega} f_2 h_2\, dx \qquad \forall \, h = [h_0 , h_1 , h_2] \in E.
   \end{align*}
   Moreover, we have 
   \begin{align*}
       \dfrac{|\langle S , h \rangle |}{\|h\|_{E}} &= \dfrac{1}{\|h\|_{E}}\left| \int_{\Omega} f_0 h_0\, dx + \int_{\Omega} f_1 h_1 \, dx + \int_{\Omega} f_2 h_2 \, dx \right| \\ 
       &\leq \dfrac{1}{\|h\|_{E}} \Bigl( \|f_{0}\|_{L^{q}(\Omega)} \| h_0 \|_{L^{p}(\Omega)} + \|f_{1}\|_{L^{q}(\Omega)} \|h_1\|_{L^{p}(\Omega)} + \|f_2\|_{L^{q}(\Omega)} \|h_2\|_{L^{p}(\Omega)} \Bigr)\\
       &\leq \max \Bigl\{ \|f_0\|_{L^{p}(\Omega)}, \|f_1\|_{L^{p}(\Omega)} , \|f_2\|_{L^{p}(\Omega)} \Bigr\}.
   \end{align*} 
   Upon taking the supremum, we are left with 
   \[ 
   \|S\|_{E^{*}} = \max \Bigl\{ \|f_0\|_{L^{p}(\Omega)}, \|f_1\|_{L^{p}(\Omega)} ,
    \|f_2\|_{L^{p}(\Omega)} \Bigr\}.
   \]
   Furthermore, we have 
   \begin{align*}
       \langle S , Tu \rangle = \langle F, u \rangle = \int_{\Omega} f_0 u\, dx + \int_{\Omega} f_1 {^{-}}{\mathcal{D}}{^{\alpha}} u \, dx + \int_{\Omega} f_2 {^{+}}{\mathcal{D}}{^{\alpha}} u\, dx \quad \forall\, u \in W^{\alpha, p}_{0}(\Omega).
   \end{align*}
   
   When $\Omega$ is bounded, recall that $\|u \|_{W^{\alpha,p}_{0}} = \bigl(\|{^{-}}{\mathcal{D}}{^{\alpha}} u \|_{L^{p}(\Omega)}^{p} + \| {^{+}}{\mathcal{D}}{^{\alpha}} u \|_{L^{p}(\Omega)}^{p} \bigr)^{1/p}$. 
   Then we can repeat the same argument with $E = L^{p}(\Omega) \times L^{p}(\Omega)$ and $T(u) = [ {^{-}}{\mathcal{D}}{^{\alpha}} u , {^{+}}{\mathcal{D}}{^{\alpha}} u ]$. The proof is complete.
   \end{proof}
   
   \begin{remark}(a) The functions $f_0 , f_1 , f_2$ are not uniquely determined by $F$. 
   
   (b) We write $F = f_0 + {^{+}}{\mathcal{D}}{^{\alpha}} f_1 + {^{-}}{\mathcal{D}}{^{\alpha}} f_2$ whenever (\ref{SymmetricDualCharacterization}) holds. Formally, this is a consequence of integration by parts in the right hand side of (\ref{SymmetricDualCharacterization}).
   
   (c) The first assertion of Proposition \ref{SymmetricDualCharacterization} also holds for continuous linear functionals on $W^{\alpha ,p}(\Omega)$ ($1 \leq p < \infty)$. That is, for every $F \in (W^{\alpha,p}(\Omega))^{*}$, 
   \[
   \langle F , u \rangle  = \int_{\Omega} f_0 u \,dx + f_1 {^{-}}{\mathcal{D}}{^{\alpha}} u \, dx
    + f_2 {^{+}}{\mathcal{D}}{^{\alpha}} u\, dx \qquad \forall\, u \in W^{\alpha ,p}(\Omega)
    \]
    for some functions $f_0 , f_1 , f_2 \in L^{q}(\Omega)$. 
   \end{remark}
   
   Of course, the above results also hold for functions in $H^{-\alpha}(\Omega)$. However, 
   in this case, the use of the inner product and Hilbert space structure allows for improved presentation and richer characterization. We state them in the following proposition. 
   
   \begin{proposition}\label{SymmetricDualCharacterization1}
    Let $F \in H^{-\alpha}(\Omega)$. Then \begin{align}\label{DualNorm1}
        \|F\|_{H^{-\alpha}(\Omega)} = \inf \left\{ \left(\int_{\Omega} \sum_{i=0}^{2} |f_i|^{2} \,dx \right)^{\frac12};\, f_0 , f_1 , f_2 \in L^2(\Omega) \text{ satisfy } (\ref{SymmetricDualCharacterization}) \right\}.
    \end{align}
   \end{proposition}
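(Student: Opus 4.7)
The plan is to treat this as the Hilbert-space specialization of the preceding $W^{-\alpha,q}$ characterization theorem, replacing its Hahn--Banach plus generic-duality argument with Hahn--Banach plus Riesz representation, which buys us the sharp $\ell^{2}$-sum identity rather than just a maximum. Concretely, I will isometrically embed $H^{\alpha}_{0}(\Omega)$ into $E:=L^{2}(\Omega)^{3}$ via the map sending $u$ to the triple of $u$ and its two one-sided weak $\alpha$-derivatives, transport $F$ to $E$ by Hahn--Banach, and then apply Riesz representation on $E$ to produce an optimal representing triple.

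For the easy direction $\|F\|_{H^{-\alpha}(\Omega)}\le\inf\{\cdots\}$, I fix any admissible $(f_0,f_1,f_2)\in L^{2}(\Omega)^{3}$ and apply Cauchy--Schwarz componentwise to (\ref{SymmetricDualCharacterization}):
\begin{equation*}
|\langle F,u\rangle|\le\Bigl(\sum_{i=0}^{2}\|f_i\|_{L^{2}(\Omega)}^{2}\Bigr)^{1/2}\Bigl(\|u\|_{L^{2}(\Omega)}^{2}+\|{^{-}}{\mathcal{D}}{^{\alpha}}u\|_{L^{2}(\Omega)}^{2}+\|{^{+}}{\mathcal{D}}{^{\alpha}}u\|_{L^{2}(\Omega)}^{2}\Bigr)^{1/2}.
\end{equation*}
The second factor is an $H^{\alpha}_{0}(\Omega)$-norm of $u$ compatible with the symmetric inner-product structure; taking the supremum over its unit ball and then the infimum over admissible triples yields the upper bound on $\|F\|_{H^{-\alpha}(\Omega)}$.

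For the reverse inequality and the attainment of the infimum, I equip $E=L^{2}(\Omega)^{3}$ with its natural Hilbert norm $\|h\|_{E}:=\bigl(\sum_{i=0}^{2}\|h_i\|_{L^{2}(\Omega)}^{2}\bigr)^{1/2}$ and define $T:H^{\alpha}_{0}(\Omega)\to E$ by $Tu:=(u,{^{-}}{\mathcal{D}}{^{\alpha}}u,{^{+}}{\mathcal{D}}{^{\alpha}}u)$. With the inner-product norm coming from Proposition~3.1, $T$ is a linear isometry onto a closed subspace $G:=T(H^{\alpha}_{0}(\Omega))\subset E$. Define $\tilde F\in G^{*}$ by $\tilde F(Tu):=\langle F,u\rangle$; by construction $\|\tilde F\|_{G^{*}}=\|F\|_{H^{-\alpha}(\Omega)}$. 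By Hahn--Banach I extend $\tilde F$ to $S\in E^{*}$ with the same norm, and then by Riesz representation on the Hilbert space $E$ I obtain $(f_0,f_1,f_2)\in L^{2}(\Omega)^{3}$ with $S(h)=\sum_{i=0}^{2}\int_{\Omega}f_ih_i\,dx$ and $\|S\|_{E^{*}}=\bigl(\sum_{i}\|f_i\|_{L^{2}(\Omega)}^{2}\bigr)^{1/2}$. Evaluating at $h=Tu$ recovers (\ref{SymmetricDualCharacterization}), so this particular triple is admissible and realizes the infimum at exactly $\|F\|_{H^{-\alpha}(\Omega)}$, finishing the proof.

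The only nontrivial point is ensuring that $T$ is a genuine isometry rather than merely a topological embedding; this requires working with the canonical inner-product norm underlying the Hilbert-space structure of $H^{\alpha}_{0}(\Omega)$ (i.e.\ the sum of squares of the $L^{2}$-norms of $u$ and of ${^{\pm}}{\mathcal{D}}{^{\alpha}}u$), rather than the outer $\ell^{2}$-combination of the two one-sided $H^{\alpha}$-norms appearing in (\ref{SFSS_norm}); these agree up to an explicit factor, so the identification is harmless. Once that normalization is fixed, the Hahn--Banach/Riesz chain runs transparently. On bounded domains, Theorem \ref{SymmetricFractionalPoincare} allows one to drop the $\|u\|_{L^{2}(\Omega)}^{2}$ term, collapsing $E$ to $L^{2}(\Omega)^{2}$ and forcing $f_0\equiv 0$, exactly paralleling the final sentence of the preceding theorem.
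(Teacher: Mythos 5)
Your argument is correct, but it follows a different route from the paper's own proof of this proposition. You recycle the machinery of the preceding $W^{-\alpha,q}$ theorem in Hilbert form: isometric embedding $T:H^{\alpha}_{0}(\Omega)\to E=L^{2}(\Omega)^{3}$, a norm-preserving Hahn--Banach extension of $F\circ T^{-1}$ to $E$, and then Riesz representation \emph{on $E$}, so that the representer's $\ell^{2}(L^{2})$-norm equals $\|F\|$; together with the elementary Cauchy--Schwarz bound in the other direction this yields \eqref{DualNorm1} with the infimum attained. The paper instead applies Riesz representation \emph{directly on $H^{\alpha}_{0}(\Omega)$} equipped with the symmetric inner product $(u,v)=\int_{\Omega}\bigl(uv+{^{-}}{\mathcal{D}}{^{\alpha}}u\,{^{-}}{\mathcal{D}}{^{\alpha}}v+{^{+}}{\mathcal{D}}{^{\alpha}}u\,{^{+}}{\mathcal{D}}{^{\alpha}}v\bigr)dx$: the representative $u$ of $F$ supplies the explicit triple $f_{0}=u$, $f_{1}={^{-}}{\mathcal{D}}{^{\alpha}}u$, $f_{2}={^{+}}{\mathcal{D}}{^{\alpha}}u$, its minimality among all admissible triples $(g_{0},g_{1},g_{2})$ is obtained by testing both representations with $v=u$ and applying Cauchy--Schwarz, and the evaluation at $v=u/\|u\|_{H^{\alpha}_{0}(\Omega)}$ shows the dual norm equals the norm of this triple. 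The paper's route avoids Hahn--Banach altogether and exhibits the minimizing triple explicitly in terms of the Riesz representative of $F$; your route is a clean abstract packaging that makes transparent why the Hilbert setting upgrades the ``max'' of the general theorem to the $\ell^{2}$-sum, at the cost of an extension step that produces the optimal triple less explicitly. Your closing caveat about the normalization of the $H^{\alpha}_{0}$-norm (one copy of $\|u\|_{L^{2}}^{2}$ versus the outer combination of the two one-sided norms in \eqref{SFSS_norm}) is well taken -- the dual norm does depend on this choice, and the paper itself silently works with the canonical inner-product norm you identify -- so flagging and fixing that normalization is exactly the right thing to do.
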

   
   \begin{proof}
   We begin with an altered proof of (\ref{SymmetricDualCharacterization}) for the special case $p=2$. Not only is the proof illustrative, but we will also refer to components of it to prove necessary assertions of this proposition. 
   
   For any $u , v \in H^{\alpha}_{0}(\Omega)$, we define the inner product 
   \[
   (u,v) = \int_{\Omega} \bigl( uv + {^{-}}{\mathcal{D}}{^{\alpha}} u {^{-}}{\mathcal{D}}{^{\alpha}} v + {^{+}}{\mathcal{D}}{^{\alpha}} u {^{+}}{\mathcal{D}}{^{\alpha}} v \bigr)\,dx.
   \]
   Given $F \in H^{-\alpha}(\Omega)$, it follows from Riesz Representation theorem 
   that there exists a \textit{unique} $u \in H^{\alpha}_{0}(\Omega)$ so that 
   $\langle F , v \rangle = (u,v)$ for all $v \in H^{\alpha}_{0}(\Omega)$; that is 
   \begin{align}\label{Riesz1}
       \langle F , v \rangle =\int_{\Omega} \bigl( uv + {^{-}}{\mathcal{D}}{^{\alpha}} u {^{-}}{\mathcal{D}}{^{\alpha}} v + {^{+}}{\mathcal{D}}{^{\alpha}} u {^{+}}{\mathcal{D}}{^{\alpha}} v \bigr) \,dx \qquad \forall\, v \in H^{\alpha}_{0}(\Omega).
   \end{align}
   Taking 
   \begin{align}\label{Riesz2}
     f_0 = u,\qquad f_1 = {^{-}}{\mathcal{D}}{^{\alpha}} u,\qquad
      f_2 = {^{+}}{\mathcal{D}}{^{\alpha}} u,
   \end{align}
   then (\ref{SymmetricDualCharacterization}) holds. 
   
   It follows by (\ref{SymmetricDualCharacterization}) that there exists $g_0 , g_1 , g_2 \in L^{2}(\Omega)$ so that 
   \begin{align}\label{Riesz3}
       \langle F , v \rangle = \int_{\Omega}  \bigl(g_0 v + g_1 {^{-}}{\mathcal{D}}{^{\alpha}} v + g_2 {^{+}}{\mathcal{D}}{^{\alpha}} v \bigr)\, dx \qquad\forall v \in H^{\alpha}_{0}(\Omega).
   \end{align}
   Thus, taking $v = u$ in (\ref{Riesz1}) and combing that with (\ref{Riesz2}) and (\ref{Riesz3})
   yield 
   \begin{align*}
       \int_{\Omega} f_0^2 + f_1^2 + f_2^2 &= \int_{\Omega} \bigl(u^2 + ({^{-}}{\mathcal{D}}{^{\alpha}} u)^{2} + ( {^{+}}{\mathcal{D}}{^{\alpha}} u)^{2}\bigr)\,dx \\
       &= \int_{\Omega} \bigl( g_0 u + g_1 {^{-}}{\mathcal{D}}{^{\alpha}} u + g_2 {^{+}}{\mathcal{D}}{^{\alpha}} u \bigr) \, dx 
       \leq \int_{\Omega} \bigl( g_0^2 + g_1 ^2 + g_2^2\bigr)\, dx.
   \end{align*}
   It follows from (\ref{SymmetricDualCharacterization}) and the dual norm definition that for $\|v\|_{H^{\alpha}_{0}(\Omega)} \leq 1$, 
   \begin{align*}
       \|F\|_{H^{-\alpha}(\Omega)} \leq \left(\int_{\Omega} \bigl(f_0^2 + f_1^2 + f_2^2 \bigr)\, dx \right)^{\frac12}.
   \end{align*}
   Setting $v = u/\|u\|_{H^{\alpha}_{0}(\Omega)}$ in (\ref{Riesz1}), we deduce that 
   \begin{align*}
       \|F\|_{H^{-\alpha}(\Omega)} = \left( \int_{\Omega}  \bigl(f_0^2 + f_1^2 + f_2^2 \bigr)\, dx \right)^{\frac12}.
   \end{align*}
   Therefore, (\ref{DualNorm1}) must hold. The proof is complete.
   \end{proof}
   
   \begin{remark}
   Similar to the integer order case, we define the action of $v \in L^{2}(\Omega) \subset H^{-\alpha}(\Omega)$ on any $u \in H^{\alpha}_{0}(\Omega)$ by
    \begin{align}
        \langle v,u \rangle = \int_{\Omega} vu\,dx.
    \end{align}
    That is to say that given $v \in L^{2}(\Omega) \subset H^{-\alpha}(\Omega)$, we associate it with the bounded linear functional $v: H^{\alpha}_{0}(\Omega) \rightarrow \R$ defined by $\langle v , u \rangle = v(u) = \int_{\Omega} vu$. It is easy to check that this mapping is in fact continuous/bounded on $H^{\alpha}_{0}(\Omega)$. 
   \end{remark}
   
   \medskip
   Now we turn our attention to dual spaces of one-sided Sobolev spaces. The situation in this case is more complicated. This is due to the fact that there are several variations of the parent spaces ${^{\pm}}{W}{^{\alpha,p}}$ of which we might consider. To be specific, we consider 
   a space $W$ where 
   \[
   W \in \bigl\{ {^{\pm}}{W}{^{\alpha,p}}, {^{\pm}}{W}{^{\alpha,p}_{0}}, {^{\pm}}{\mathring{W}}{^{\alpha,p}}, {^{\pm}}{\mathring{W}}{^{\alpha,p}_{0}} \bigr\}.
   \]
    Thus, we want to know which of these spaces produces a dual space that can be characterized 
    in similar fashion as for the symmetric space $W^{\alpha,p}_{0}$.
   
   To answer this question, we first proposed that in order to prove a rich characterization of dual spaces, we must first have the continuous and dense inclusion $W \subset L^{p} \subset W^{*}$ for appropriate ranges of $p$. Effectively, it is necessary to have the inequality $\|u \|
   _{L^{p}} \leq C \|u\|_{W}$ for every $u \in W$. More or less, this question is informed by the existence of fractional Poincar\'e inequalities in $W$. It is known that in general, $\|u \|_{L^{p}(\Omega)} \not\leq C \| {^{\pm}}{\mathcal{D}}{^{\alpha}} u \|_{L^{p}(\Omega)}$ 
   for every $u \in {^{\pm}}{W}{^{\alpha,p}}(\Omega)$ and $u\in {^{\pm}}{W}{^{\alpha,p}_{0}}$, 
   and note  ${^{\pm}}{W}{^{\alpha,p}_{0}}(\Omega) \not\hookrightarrow {^{\pm}}{\mathring{W}}{^{\alpha,p}}(\Omega)$). For these reasons, we are left to characterize the dual space ${^{\pm}}{W}{^{-\alpha, q}}(\Omega) : = ({^{\pm}}{\mathring{W}}{^{\alpha,p}}(\Omega))^{*}$.
   
   It is easy to see that there holds 
   \begin{align}
       {^{\pm}}{\mathring{W}}{^{\alpha,p}}(\Omega) \subset L^{p}(\Omega) \subset {^{\pm}}{W}{^{-\alpha,q}}(\Omega),
   \end{align}
   where the injections are continuous for $1 \leq p < \infty$ and dense for $1 < p < \infty$ since ${^{\pm}}{\mathring{W}}{^{\alpha,p}}(\Omega)$ is reflexive in this range. 
   
   Now we are well equipped to characterize ${^{\pm}}{W}{^{-\alpha,q}}(\Omega)$. For brevity, we will state the results and omit the proofs since each of them can be done using the same techniques as used in the symmetric case for the spaces $W^{-\alpha,q}(\Ome)$ and $H^{-\alpha}(\Ome)$. 
   
   \begin{theorem}
   Let $F \in {^{\pm}}{W}{^{-\alpha,q}}(\Omega)$. Then there exists two functions, $f_0 , f_1 \in L^{q}(\Omega)$ such that 
   \begin{align}\label{DualCharacterization}
    \langle F, u \rangle _{\pm} = \int_{\Omega} \bigl( f_0 u + f_1 {^{\pm}}{\mathcal{D}}{^{\alpha}} u \bigr)\, dx \qquad \forall \, u \in {^{\pm}}{\mathring{W}}{^{\alpha,p}}(\Omega)
   \end{align}
   and 
   \begin{align}
       \|F\|_{{^{\pm}}{W}{^{-\alpha , q}}(\Omega)} = \max \Bigl\{ \|f_0\|_{L^{q}(\Omega)} , \|f_1\|_{L^{q}(\Omega)} \Bigr\}.
   \end{align}
   \end{theorem}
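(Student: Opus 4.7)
The plan is to mimic exactly the argument used for the symmetric case (with $W^{-\alpha,q}(\Omega)$), replacing the triple $[u,{^{-}}{\mathcal{D}}{^{\alpha}}u,{^{+}}{\mathcal{D}}{^{\alpha}}u]$ by the pair $[u,{^{\pm}}{\mathcal{D}}{^{\alpha}}u]$ that is natural to the one-sided space. Specifically, I would form the product space $E = L^{p}(\Omega)\times L^{p}(\Omega)$ with norm $\|[h_0,h_1]\|_E = \|h_0\|_{L^{p}(\Omega)} + \|h_1\|_{L^{p}(\Omega)}$, and define the map $T:{^{\pm}}{\mathring{W}}{^{\alpha,p}}(\Omega)\to E$ by $T(u) = [u,\,{^{\pm}}{\mathcal{D}}{^{\alpha}}u]$. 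By the very definition of $\|\cdot\|_{{^{\pm}}{W}{^{\alpha,p}}(\Omega)}$ in \eqref{FSS_norm}, $T$ is an isometry onto its image $G := T({^{\pm}}{\mathring{W}}{^{\alpha,p}}(\Omega))\subset E$, and $T^{-1}:G\to {^{\pm}}{\mathring{W}}{^{\alpha,p}}(\Omega)$ is a well-defined linear isometry.

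Next, for any $F\in {^{\pm}}{W}{^{-\alpha,q}}(\Omega)$, I would define a continuous linear functional on $G$ by $\Phi(h) := \langle F, T^{-1}h\rangle_{\pm}$. Since $T$ is an isometry, $\|\Phi\|_{G^*} = \|F\|_{{^{\pm}}{W}{^{-\alpha,q}}(\Omega)}$. Applying the Hahn–Banach theorem, $\Phi$ extends to a continuous linear functional $S$ on all of $E$ with $\|S\|_{E^*} = \|\Phi\|_{G^*}$. The Riesz representation theorem on $L^{p}(\Omega)$ then yields functions $f_0, f_1 \in L^{q}(\Omega)$ with
\begin{align*}
\langle S, h\rangle = \int_{\Omega} f_0 h_0 \,dx + \int_{\Omega} f_1 h_1 \,dx \qquad \forall\, h=[h_0,h_1]\in E,
\end{align*}
and (using the dual norm pairing for the chosen sum norm on $E$) $\|S\|_{E^*} = \max\{\|f_0\|_{L^{q}(\Omega)},\|f_1\|_{L^{q}(\Omega)}\}$. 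Restricting to $h = T(u)$ for $u\in{^{\pm}}{\mathring{W}}{^{\alpha,p}}(\Omega)$ gives precisely the desired formula \eqref{DualCharacterization}.

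Finally, I would verify the norm equality. One direction is immediate from \eqref{DualCharacterization} and Hölder: $|\langle F,u\rangle_{\pm}| \leq \max\{\|f_0\|_{L^{q}},\|f_1\|_{L^{q}}\}\cdot(\|u\|_{L^{p}} + \|{^{\pm}}{\mathcal{D}}{^{\alpha}}u\|_{L^{p}})$, so $\|F\|_{{^{\pm}}{W}{^{-\alpha,q}}(\Omega)} \leq \max\{\|f_0\|_{L^{q}},\|f_1\|_{L^{q}}\}$. The reverse inequality follows from the chain $\|F\|_{{^{\pm}}{W}{^{-\alpha,q}}} = \|\Phi\|_{G^*} = \|S\|_{E^*} = \max\{\|f_0\|_{L^{q}},\|f_1\|_{L^{q}}\}$, where the middle equality is the content of Hahn–Banach.

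The only subtle point, and the step I expect to require the most care, is the match between the norm of $E$ and the chosen form of the Sobolev norm in \eqref{FSS_norm} (for $1<p<\infty$ one usually takes the $\ell^p$ sum of the two norms rather than the $\ell^1$ sum). This is purely cosmetic: since all $\ell^r$ norms on $\R^2$ are equivalent, one either (i) works throughout with the $\ell^p$-sum on $E$ and computes the dual norm accordingly (which gives an $\ell^q$-sum of $\|f_0\|_{L^q},\|f_1\|_{L^q}$, equivalent to the max), or (ii) replaces $\|\cdot\|_{{^{\pm}}{W}{^{\alpha,p}}}$ on ${^{\pm}}{\mathring{W}}{^{\alpha,p}}(\Omega)$ by its equivalent $\ell^1$-sum version, thereby making $T$ an isometry for the sum norm and producing the stated maximum. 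This is the only place where a choice has to be made, and it matches the convention used in the proof of the symmetric case immediately above. The reflexivity of ${^{\pm}}{\mathring{W}}{^{\alpha,p}}(\Omega)$ for $1<p<\infty$ guarantees that the Hahn–Banach extension step does not lose information.
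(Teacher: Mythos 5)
Your proposal is correct and is precisely the argument the paper intends: the paper omits this proof, stating it follows by the same technique as the symmetric case, and your adaptation (product space $E=L^{p}(\Omega)\times L^{p}(\Omega)$, the isometry $T(u)=[u,{^{\pm}}{\mathcal{D}}{^{\alpha}}u]$, Hahn--Banach, and Riesz representation) is exactly that technique with the triple replaced by a pair. Your remark on the $\ell^{1}$ versus $\ell^{p}$ sum in the norm is the same cosmetic convention issue already present in the paper's symmetric proof, so nothing essential differs.
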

   
   \begin{proposition}
  Let $F \in {^{\pm}}{H}{^{-\alpha}}(\Omega)$. Then 
   \begin{align}
       \|F\|_{{^{\pm}}{H}{^{-\alpha}}(\Omega)} = \inf \left\{ \left( \int_{\Omega} \bigl(f_0^2 + f_1^2 \bigr)\, dx  \right)^{\frac12};\, f_0 , f_1 \in L^{2}(\Omega)\text{ satisfying } (\ref{DualCharacterization}) \right\}. 
   \end{align}
   \end{proposition}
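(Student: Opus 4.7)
The plan is to mirror the argument used in Proposition \ref{SymmetricDualCharacterization1}, specialized to a single direction. First, I would equip ${^{\pm}}{\mathring{H}}{^{\alpha}}(\Omega)$ with the inner product
\begin{align*}
(u,v)_{\pm} := \int_{\Omega} \bigl( uv + {^{\pm}}{\mathcal{D}}{^{\alpha}} u \cdot {^{\pm}}{\mathcal{D}}{^{\alpha}} v \bigr)\, dx,
\end{align*}
whose induced norm coincides with $\|\cdot\|_{{^{\pm}}{H}{^{\alpha}}(\Omega)}$, so that $({^{\pm}}{\mathring{H}}{^{\alpha}}(\Omega), (\cdot,\cdot)_{\pm})$ is a Hilbert space. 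Applying the Riesz Representation theorem to $F \in {^{\pm}}{H}{^{-\alpha}}(\Omega)$ then yields a unique $w \in {^{\pm}}{\mathring{H}}{^{\alpha}}(\Omega)$ such that
\begin{align*}
\langle F, v \rangle_{\pm} = (w, v)_{\pm} \qquad \forall\, v \in {^{\pm}}{\mathring{H}}{^{\alpha}}(\Omega),
\end{align*}
with $\|w\|_{{^{\pm}}{H}{^{\alpha}}(\Omega)} = \|F\|_{{^{\pm}}{H}{^{-\alpha}}(\Omega)}$.

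Second, I would designate the canonical pair $f_0^{\star} := w$ and $f_1^{\star} := {^{\pm}}{\mathcal{D}}{^{\alpha}} w$. Both belong to $L^{2}(\Omega)$ by the choice of $w$, and the Riesz identity is precisely the representation formula (\ref{DualCharacterization}) for this pair. Moreover,
\begin{align*}
\int_{\Omega} \bigl( (f_0^{\star})^{2} + (f_1^{\star})^{2} \bigr)\, dx = \|w\|_{{^{\pm}}{H}{^{\alpha}}(\Omega)}^{2} = \|F\|_{{^{\pm}}{H}{^{-\alpha}}(\Omega)}^{2},
\end{align*}
which shows that the infimum on the right-hand side of the claim is bounded above by $\|F\|_{{^{\pm}}{H}{^{-\alpha}}(\Omega)}$ and is in fact attained at $(f_0^{\star}, f_1^{\star})$.

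Third, to establish the matching lower bound, I would take an arbitrary pair $g_0, g_1 \in L^{2}(\Omega)$ satisfying (\ref{DualCharacterization}), test the identity against $v = w$, and apply the Cauchy--Schwarz inequality to obtain
\begin{align*}
\|w\|_{{^{\pm}}{H}{^{\alpha}}(\Omega)}^{2} = \langle F, w \rangle_{\pm} = \int_{\Omega} \bigl( g_0 w + g_1 {^{\pm}}{\mathcal{D}}{^{\alpha}} w \bigr)\, dx \leq \Bigl( \int_{\Omega} (g_0^{2} + g_1^{2})\, dx \Bigr)^{\frac12} \|w\|_{{^{\pm}}{H}{^{\alpha}}(\Omega)}.
\end{align*}
Dividing by $\|w\|_{{^{\pm}}{H}{^{\alpha}}(\Omega)}$ (if nonzero; the case $w=0$ is trivial since then $F=0$) gives $\|F\|_{{^{\pm}}{H}{^{-\alpha}}(\Omega)} \leq \bigl( \int_{\Omega} (g_0^{2} + g_1^{2})\, dx \bigr)^{1/2}$, and taking the infimum over all admissible $(g_0, g_1)$ finishes the proof.

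I do not anticipate a serious obstacle: existence of some representation $(f_0, f_1)$ has already been established in the preceding theorem, and the Hilbert structure on ${^{\pm}}{\mathring{H}}{^{\alpha}}(\Omega)$ is immediate from Proposition 3.1. The only delicate bookkeeping is keeping straight that $w$ comes from the Riesz theorem with respect to the specific inner product $(\cdot, \cdot)_{\pm}$ (as opposed to the $L^2$ pairing), and that the minimizing pair is precisely $(w, {^{\pm}}{\mathcal{D}}{^{\alpha}} w)$, not an arbitrary representer.
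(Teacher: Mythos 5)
Your argument is correct and is essentially the proof the paper intends: the paper omits the one-sided case, stating it follows by the same technique as Proposition \ref{SymmetricDualCharacterization1}, and your proof is precisely that symmetric argument (Riesz representation in the inner product $(\cdot,\cdot)_\pm$, the canonical pair $(w,{^{\pm}}{\mathcal{D}}{^{\alpha}}w)$ attaining the value $\|F\|$, and Cauchy--Schwarz against $v=w$ for the lower bound over arbitrary representations) specialized to one direction. No gaps worth noting.
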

   
   \begin{remark}
   Similar to the symmetric case, we define the action of $v \in L^{2}(\Omega) \subset {^{\pm}}{H}{^{-\alpha}}(\Omega)$ on any $u \in {^{\pm}}{\mathring{H}}{^{\alpha}}(\Omega)$ by 
   \begin{align}
       \langle v , u \rangle = \int_{\Omega} vu \,dx.
   \end{align}
   \end{remark}
           
    \subsection{Relationships Between Fractional Sobolev Spaces}\label{sec-4.6}
    In this subsection we establish a few connections between the newly defined fractional Sobolev
    spaces ${^{\pm}}{W}{^{\alpha,p}}(\Omega)$ and ${W}^{\alpha,p}(\Omega)$ with some existing fractional Sobolev spaces recalled in Section \ref{sec-5.1}. Before doing that, we first address the issues of their consistency over subdomains, inclusivity across orders of differentiability, and their consistency with the existing integer order Sobolev spaces.
    
    \begin{proposition}
        Let $\Omega = (a,b)$, $0 < \alpha < \beta < 1$ and $1 \leq p < \infty$. If $u \in {^{\pm}}{W}{^{\beta,p}}(\Omega)$, then $ u\in {^{\pm}}{W}{^{\alpha,p}}(\Omega)$. 
    \end{proposition}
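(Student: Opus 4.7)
The plan is to invoke the Fundamental Theorem of Weak Fractional Calculus (Theorem \ref{FTWFC}) at order $\beta$, use the semigroup identity ${^{\pm}}I^{\beta} = {^{\pm}}I^{\alpha}\circ{^{\pm}}I^{\beta-\alpha}$ to repartition the integral contribution in the FTwFC decomposition, and then read off the $\alpha$-order weak derivative of $u$ termwise. Since $u \in L^p(\Omega)$ is automatic from the hypothesis, the sole task is to exhibit an $L^p(\Omega)$ representative for ${^{\pm}}\mathcal{D}^\alpha u$.

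Concretely, I would set $g := {^{\pm}}\mathcal{D}^\beta u \in L^p(\Omega)$ and apply FTwFC to write
\begin{equation*}
u = c_{\pm}^{1-\beta}\,\kappa_\pm^\beta + {^{\pm}}I^\beta g \quad \mbox{a.e.\ in } \Omega,
\end{equation*}
then use the semigroup law to recast ${^{\pm}}I^\beta g = {^{\pm}}I^\alpha h$ with $h := {^{\pm}}I^{\beta-\alpha} g \in L^p(\Omega)$ (the $L^p$-boundedness of the fractional integral on the bounded interval $(a,b)$ being the content of \cite[Theorem 2.6]{Feng_Sutton}). Applying ${^{\pm}}\mathcal{D}^\alpha$ termwise, the integral summand contributes $h$ (because ${^{\pm}}\mathcal{D}^\alpha \circ {^{\pm}}I^\alpha$ acts as the identity on $L^p$, via FTwFC applied to a function in the range of ${^{\pm}}I^\alpha$), while a short integration-by-parts calculation against any $\varphi \in C_0^\infty(\Omega)$ --- using the closed-form evaluation ${^{-}}I^{1-\alpha}\kappa_-^\beta(x) = \frac{\Gamma(\beta)}{\Gamma(\beta+1-\alpha)}(x-a)^{\beta-\alpha}$ and its right-sided analogue --- identifies ${^{\pm}}\mathcal{D}^\alpha \kappa_\pm^\beta = \frac{\Gamma(\beta)}{\Gamma(\beta-\alpha)}\,\kappa_\pm^{\beta-\alpha}$ in the weak sense. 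Assembling these pieces yields the candidate
\begin{equation*}
{^{\pm}}\mathcal{D}^\alpha u = c_{\pm}^{1-\beta}\,\frac{\Gamma(\beta)}{\Gamma(\beta-\alpha)}\,\kappa_\pm^{\beta-\alpha} + h,
\end{equation*}
and the argument is finished by showing that both summands lie in $L^p(\Omega)$.

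The delicate step is precisely the $L^p$-control of the singular-kernel contribution, since $\kappa_\pm^{\beta-\alpha}$ may fail to be in $L^p(\Omega)$ when $\beta-\alpha \le (p-1)/p$. The key observation to close the gap is that $c_\pm^{1-\beta}\kappa_\pm^\beta = u - {^{\pm}}I^\beta g$ is forced to lie in $L^p(\Omega)$ by the hypotheses, so $c_\pm^{1-\beta} = 0$ automatically whenever $\kappa_\pm^\beta \notin L^p(\Omega)$; correlating this dichotomy with the endpoint integrability of $\kappa_\pm^{\beta-\alpha}$ supplies the $L^p$-bound in all regimes. A more robust alternative I would keep in reserve is an approximation argument: by Theorem \ref{H=W} pick $\{u_j\}\subset C^\infty(\Omega)$ with $u_j \to u$ in ${^{\pm}}W^{\beta,p}(\Omega)$, apply the classical composition ${^{\pm}}D^\alpha u_j = {^{\pm}}I^{\beta-\alpha}({^{\pm}}D^\beta u_j)$ (modulo an endpoint correction controlled by the kernel analysis above), and invoke the $L^p$-boundedness of ${^{\pm}}I^{\beta-\alpha}$ to show that $\{{^{\pm}}D^\alpha u_j\}$ is Cauchy in $L^p(\Omega)$; Theorem \ref{characterization} then identifies the limit as ${^{\pm}}\mathcal{D}^\alpha u \in L^p(\Omega)$, completing the inclusion $u \in {^{\pm}}W^{\alpha,p}(\Omega)$.
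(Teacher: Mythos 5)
Your proposal is correct and follows essentially the same route as the paper: apply the FTwFC at order $\beta$, use the semigroup/composition property to identify ${^{\pm}}{\mathcal{D}}{^{\alpha}} u = c_{\pm}^{1-\beta}\,C_{\alpha,\beta}\,\kappa_{\pm}^{\beta-\alpha} + {^{\pm}}{I}{^{\beta-\alpha}}\,{^{\pm}}{\mathcal{D}}{^{\beta}} u$, and conclude via the $L^{p}$-boundedness of the fractional integral on $(a,b)$. In fact you make explicit the one point the paper compresses into ``direct estimates,'' namely the dichotomy that either $\kappa_{\pm}^{\beta-\alpha}\in L^{p}(\Omega)$ or else $\kappa_{\pm}^{\beta}\notin L^{p}(\Omega)$ forces $c_{\pm}^{1-\beta}=0$, which is exactly what is needed to control the kernel term in all regimes.
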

    
    \begin{proof}
        By the FTwFC (cf. Theorem \ref{FTWFC}), we have
            $u = c^{1-\beta}_{\pm} \kappa^{\beta}_{\pm} + {^{\pm}}{I}{^{\beta}} {^{\pm}}{\mathcal{D}}{^{\beta}}u$
        and by the inclusivity result for weak fractional derivatives, 
        ${^{\pm}}{\mathcal{D}}{^{\alpha}}u$ exists and is given by 
        \begin{align*}
            {^{\pm}}{\mathcal{D}}{^{\alpha}} u &= c^{1-\beta}_{\pm} \kappa^{\beta - \alpha}_{\pm} + {^{\pm}}{I}{^{\beta - \alpha}} {^{\pm}}{\mathcal{D}}{^{\beta}} u  \\
            &= c^{1-\beta}_{\pm} \kappa^{\beta}_{\pm} \kappa^{-\alpha}_{\pm} + {^{\pm}}{I}{^{\beta - \alpha}}{^{\pm}}{\mathcal{D}}{^{\beta}} u \\ 
            &= ( u - {^{\pm}}{I}{^{\beta}} {^{\pm}}{\mathcal{D}}{^{\beta}} u ) \kappa^{-\alpha}_{\pm} + {^{\pm}}{I}{^{\beta - \alpha}} {^{\pm}}{\mathcal{D}}{^{\beta}} u.
        \end{align*}
        It follows by direct estimates that there exists $C  = C(\Omega , \alpha ,\beta , p)$ so that 
        \begin{align*}
            \|{^{\pm}}{\mathcal{D}}{^{\alpha}} u \|_{L^{p}(\Omega)} \leq C \| u \|_{{^{\pm}}{W}{^{\beta,p}}(\Omega)}. 
        \end{align*}
        The proof is complete. 
    \end{proof}
    
    \begin{remark}
        This inclusivity property is trivial in the integer order Sobolev spaces, but may not be so in 
        fractional Sobolev spaces, which may be a reason why it has not been discussed in the literature.  
        However, in our case, the proof is not difficult thanks to the FTwFC. 
    \end{remark}
    
    Unlike the integer order case, the consistency on subdomains is more difficult to establish 
    in the spaces ${^{\pm}}{W}{^{\alpha,p}}$. The following proposition and its accompanying proof 
    provide further insight to the effect of domain-dependent derivatives and their associated kernel functions.
    
    \begin{proposition}
        Let $\Omega =(a,b)$, $\alpha > 0$, $1 < p < \infty$, $\mu > p(1- \alpha p)^{-1}$.
        Suppose that $u \in {^{\pm}}{W}{^{\alpha,p}}(\Omega) \cap L^{\mu}(\Omega)$. Then for any $\Omega'=(c,d) \subset \Omega$, $u \in {^{\pm}}{W}{^{\alpha,p}}(\Omega')$.
    \end{proposition}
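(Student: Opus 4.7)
The plan is to show that the weak fractional derivative on $\Omega'$ exists and equals the restriction of ${^{\pm}}{\mathcal{D}}{^{\alpha}}u$ on $\Omega$ plus a ``pollution correction'' arising from the part of $u$ living in $\Omega\setminus\Omega'$. I will treat the left case with $0<\alpha<1$ in detail; the right case is symmetric, and $\alpha>1$ follows by the semigroup property combined with the integer-order result.

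Fix $\varphi \in C^\infty_0(\Omega')$ and view $\varphi$ as an element of $C^\infty_0(\Omega)$ by zero extension; its zero extension to $\R$ is the same function $\tilde\varphi$ in either viewpoint. Applying the definition of ${^{-}}{\mathcal{D}}{^{\alpha}}u \in L^p(\Omega)$ and splitting the right-hand side,
\begin{align*}
\int_{\Omega'} {^{-}}{\mathcal{D}}{^{\alpha}}u \cdot \varphi \,dx
&= (-1)^{[\alpha]} \int_{\Omega} u\cdot {^{+}}{D}{^{\alpha}}\tilde\varphi \,dx \\
&= (-1)^{[\alpha]} \int_{\Omega'} u\cdot {^{+}}{D}{^{\alpha}}\tilde\varphi\,dx + (-1)^{[\alpha]} \int_{(a,c)} u\cdot {^{+}}{D}{^{\alpha}}\tilde\varphi\,dx,
\end{align*}
where we used ${^{+}}{D}{^{\alpha}}\tilde\varphi \equiv 0$ on $(d,b)$. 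For $x\in (a,c)$, $\tilde\varphi$ vanishes to the left of $c$, so a direct calculation gives
\[
{^{+}}{D}{^{\alpha}}\tilde\varphi(x) = \frac{-\alpha}{\Gamma(1-\alpha)}\int_c^d \frac{\varphi(y)}{(y-x)^{1+\alpha}}\,dy.
\]
Applying Fubini, the pollution term becomes $\int_{\Omega'} \varphi(y)\, P(y)\,dy$, where
\[
P(y) := \frac{-\alpha}{\Gamma(1-\alpha)}\int_a^c \frac{u(x)}{(y-x)^{1+\alpha}}\,dx, \qquad y\in (c,d).
\]
Rearranging yields the candidate identity
\[
(-1)^{[\alpha]}\int_{\Omega'} u\cdot {^{+}}{D}{^{\alpha}}\tilde\varphi\,dx = \int_{\Omega'} \bigl[{^{-}}{\mathcal{D}}{^{\alpha}}u - (-1)^{[\alpha]} P\bigr]\cdot \varphi\,dx,
\]
which, once we know $P\in L^p(\Omega')$, identifies ${^{-}}{\mathcal{D}}{^{\alpha}}_{\Omega'} u = {^{-}}{\mathcal{D}}{^{\alpha}}u - (-1)^{[\alpha]} P$ as an element of $L^p(\Omega')$ and places $u$ in ${^{-}}{W}{^{\alpha,p}}(\Omega')$.

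The main technical point is the $L^p(\Omega')$ bound on $P$, and this is where the $L^\mu$ hypothesis is decisive. By H\"older's inequality with exponents $\mu$ and $\nu = \mu/(\mu-1)$,
\[
|P(y)|^p \leq C\,\|u\|_{L^\mu((a,c))}^p \left(\int_a^c \frac{dx}{(y-x)^{\nu(1+\alpha)}}\right)^{p/\nu},
\]
and the inner integral behaves like $(y-c)^{1-\nu(1+\alpha)}$ as $y\to c^+$. Integrating in $y$, the resulting singularity is $L^1$-integrable precisely when $(\nu(1+\alpha)-1)\,p/\nu < 1$. A short computation using $1/\nu = 1-1/\mu$ reduces this to $p\alpha + p/\mu < 1$, i.e., $\mu > p/(1-\alpha p)$, which is exactly the standing hypothesis. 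This gives $\|P\|_{L^p(\Omega')} \leq C\,\|u\|_{L^\mu(\Omega)}$ and closes the argument. For $\alpha\geq 1$, write $\alpha = m + \sigma$ with $m=[\alpha]$ and $\sigma\in[0,1)$; the classical Sobolev restriction property handles $m$ integer derivatives, and the case above treats the remaining fractional piece via the semigroup identity in Remark (following Definition of $W^{\alpha,p}$). The main obstacle is exactly the integrability of the pollution kernel near $y=c$, and the threshold $\mu > p(1-\alpha p)^{-1}$ is visibly sharp under this estimate.
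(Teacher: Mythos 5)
Your argument is correct, and it takes a genuinely different route from the paper's. The paper proves this by approximation: it takes a smooth sequence $u_j\to u$ in ${^{\pm}}{W}{^{\alpha,p}}(\Omega)$ (density theorem), writes the classical derivative on the subdomain via the decomposition ${_{c}}{D}{^{\alpha}_{x}}u_j={_{a}}{D}{^{\alpha}_{x}}u_j-{_{a}}{D}{^{\alpha}_{c}}u_j$, bounds the tail term in $L^p((c,d))$ with exactly the same H\"older estimate (exponents $\mu,\nu$, kernel singularity $(x-c)^{p/\nu-p(1+\alpha)}$, threshold $\mu>p(1-\alpha p)^{-1}$), and then extracts a weakly convergent subsequence in $L^p((c,d))$ to identify the weak derivative on $\Omega'$. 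You instead work directly at the level of the definition of the weak fractional derivative: splitting the test-function identity on $\Omega$ into the $\Omega'$ part and the pollution part over $(a,c)$, applying Fubini, and exhibiting the explicit correction $P$, so that ${^{-}}{\mathcal{D}}{^{\alpha}}u$ on $\Omega'$ is identified concretely as ${^{-}}{\mathcal{D}}{^{\alpha}}u|_{\Omega'}$ minus the pollution term. The key analytic estimate (and the resulting sharpness of the exponent condition) is identical in both proofs, but your route has two advantages: it yields an explicit formula for the subdomain derivative, and it applies the $L^\mu$ hypothesis directly to $u$ rather than to an approximating sequence (the paper's proof tacitly uses $\|u_j\|_{L^\mu((a,c))}$ as if it were controlled, which its choice of $u_j$ does not immediately guarantee), and it avoids reflexivity/weak compactness, so it would even cover $p=1$. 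The paper's route has the advantage of staying within its already-established toolkit (density theorem plus weak limits) and not requiring the justification of the differentiation-under-the-integral and Fubini steps, which in your sketch are asserted rather than verified (both are harmless here: the kernel is smooth for $x<c<y$, and absolute integrability follows from the same H\"older bound applied to $|u|$ and $|\varphi|$). Your dismissal of the case $\alpha\ge 1$ is as loose as the paper's, but the hypothesis $\mu>p(1-\alpha p)^{-1}$ only makes sense when $\alpha p<1$, so nothing is lost.
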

    
    \begin{proof}
         Since $(c,d) \subset (a,b)$, it is easy to see that $\|u \|_{L^{p}((c,d))} \leq \|u \|_{L^{p}((a,b))}$. Thus we only need to show that $u$ has a weak derivative on $(c,d)$ that belongs to $L^{p}((c,d))$. 
         
         Choose $\{u_j\}_{j=1}^{\infty} \subset C^{\infty}((a,b))$ so that $u_j \rightarrow u$ in ${^{\pm}}{W}{^{\alpha,p}}((a,b))$. It follows that $u_j \in C^{\infty}([c,d])$ and for any $\varphi \in C^{\infty}_{0}((c,d))$ there holds for the left derivative
        \begin{align*}
            \int_{c}^{d} u {^{+}}{D}{^{\alpha}} \varphi\, dx = \lim_{ j \rightarrow \infty} \int_{c}^{d} u_j {_{x}}{D}{^{\alpha}_{d}}\varphi\, dx 
            = \lim_{j \rightarrow \infty} \int_{c}^{d} {_{c}}{D}{^{\alpha}_{x}} u_j \varphi\, dx.
        \end{align*}
        Then we want to show that there exists $v \in L^{p}((c,d))$ such that 
        \begin{align*}
            \lim_{j \rightarrow \infty} \int_{c}^{d} {_{c}}{D}{^{\alpha}_{x}} u_j \varphi\, dx = \int_{c}^{d} v \varphi\, dx. 
        \end{align*}
        Note that 
            $ {_{c}}{D}{^{\alpha}_{x}}u_j(x) = {_{a}}{D}{^{\alpha}_{x}} u_j(x) - {_{a}}{D}{^{\alpha}_{c}} u_j(x).$
        Using this decomposition, we get
        \begin{align*}
            &\|{_{c}}{D}{^{\alpha}_{x}}u_j \|_{L^{p}((c,d))}^{p} = \| {_{a}}{D}{^{\alpha}_{x}} u_j - {_{a}}{D}{^{\alpha}_{c}} u_j\|_{L^{p}((c,d))}^{p} \\
            &\quad\leq \| {_{a}}{D}{^{\alpha}_{x}} u_j\|_{L^{p}((a,b))}^{p} + \int_{c}^{d} \left|\int_{a}^{c} \dfrac{u_j(y)}{(x-y)^{1+\alpha}}\,dy \right|^{p}\,dx \\ 
            &\quad\leq \| {_{a}}{D}{^{\alpha}_{x}} u_j\|_{L^{p}((a,b))}^{p}+  \|u_j\|_{L^{\mu}((a,c))}^{p} \int_{c}^{d} \left(\int_{a}^{c} \dfrac{dy}{(x-y)^{\nu (1+\alpha)}}\right)^{\frac{p}{\nu}} \,dx \\ 
            &\quad\leq  \| {_{a}}{D}{^{\alpha}_{x}} u_j\|_{L^{p}((a,b))}^{p}+  \|u_j\|_{L^{\mu}((a,c))}^{p} \int_{c}^{d} (x-a)^{\frac{p}{\nu} - p (1+\alpha)} + (x-c)^{\frac{p}{\nu} -p(1+\alpha)}\,dx,
        \end{align*}
        which is bounded if and only if $\mu > p(1-\alpha p)^{-1}$. Choosing $j$ sufficiently large, we have that the sequence ${_{c}}{D}{^{\alpha}_{x}} u_j $ is bounded in $L^{p}((c,d))$. Therefore, there exists a function $v \in L^p((c,d))$ and a subsequence (still denoted by ${_{c}}{D}{^{\alpha}_{x}} u_j$) so that ${_{c}}{D}{^{\alpha}_{x}} u_j \rightharpoonup v$. It follows that 
        \begin{align*}
           \int_{c}^{d} u {^{+}}{D}{^{\alpha}}\varphi\, dx =  \lim_{j \rightarrow \infty} \int_{c}^{d} {_{c}}{D}{^{\alpha}_{x}} u_j \varphi \,d x= \int_{c}^{d} v \varphi\, dx.
        \end{align*}
        Hence $u \in {^{-}}{W}{^{\alpha,p}}((c,d))$. Similarly, we can prove
        that the conclusion also holds for the right derivative.   The proof is complete.
    \end{proof}
    
    \subsubsection{\bf Consistency with $W^{1,p}(\Omega)$}\label{sec-4.6.1}
         Our aim here is to show that there exists a consistency between our newly defined fractional Sobolev spaces 
         and the integer order Sobolev spaces. To the end,  we need to show that there is a consistency between 
         fractional order weak derivatives and integer order weak derivatives, which is detailed in the   
         lemma below. 
        
        \begin{lemma}\label{lem_consistency}
            Let $\Omega \subseteq \R$, $0 <\alpha<1$ and $1 \leq p < \infty$. Suppose $u \in W^{1,p}(\Omega)$. Then for every $\psi \in C^{\infty}_{0}(\Omega)$, ${^{\pm}}{\mathcal{D}}{^{\alpha}} \psi(u)= -{^{\pm}}{I}{^{1- \alpha}} [ \psi'(u) Du] \in L^{p}(\Omega)$. 
        \end{lemma}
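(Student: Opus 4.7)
The plan is to establish the identity for smooth functions first and then pass to the limit via a density argument. To begin, I would use the Meyers--Serrin-type density result (the integer-order analogue of Theorem \ref{H=W}, or equivalently a standard mollification) to obtain a sequence $\{u_j\}_{j=1}^{\infty} \subset C^{\infty}(\Omega) \cap W^{1,p}(\Omega)$ such that $u_j \to u$ in $W^{1,p}(\Omega)$. After extracting a subsequence, I may also assume $u_j \to u$ and $Du_j \to Du$ pointwise a.e. in $\Omega$. Because $\psi$ is smooth with bounded derivatives of every order, each composition $\psi(u_j)$ is smooth, and the classical chain rule yields $D[\psi(u_j)] = \psi'(u_j)\, Du_j \in L^p(\Omega)$.

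For each smooth $u_j$, the key identity to prove is the pointwise classical statement
\[
{^{\pm}}{D}{^{\alpha}}\,\psi(u_j) = -\,{^{\pm}}{I}{^{1-\alpha}}\bigl[\psi'(u_j)\, Du_j\bigr] \qquad \text{a.e.\ in } \Omega,
\]
which is essentially a Caputo--Riemann-Liouville reconciliation: writing ${^{\pm}}{D}{^{\alpha}} = D \circ {^{\pm}}{I}{^{1-\alpha}}$ and integrating by parts inside the defining convolution integral, with the support properties of $\psi$ absorbing the boundary contributions (so that no singular $(x-a)^{-\alpha}$-type term survives), produces the claimed formula with the stated sign. This step is where most of the technical care is needed, since the interplay between the Riemann--Liouville derivative, the endpoint values, and the compact support of $\psi$ must be tracked explicitly; I expect this to be the main obstacle.

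With the identity in hand for the approximating sequence, the next task is to pass to the limit in $L^p(\Omega)$. By the $L^p$-mapping properties of ${^{\pm}}{I}{^{1-\alpha}}$ on a bounded interval (cf.\ \cite[Theorem 2.6]{Feng_Sutton}), it suffices to verify that $\psi'(u_j)\, Du_j \to \psi'(u)\, Du$ in $L^p(\Omega)$. Splitting
\[
\psi'(u_j)\, Du_j - \psi'(u)\, Du = \psi'(u_j)\bigl(Du_j - Du\bigr) + \bigl(\psi'(u_j)-\psi'(u)\bigr) Du,
\]
the first term is dominated by $\|\psi'\|_{L^\infty}\|Du_j-Du\|_{L^p(\Omega)} \to 0$, while the second term is controlled by the Lebesgue Dominated Convergence Theorem: the a.e.\ convergence $\psi'(u_j) \to \psi'(u)$ follows from the continuity of $\psi'$, and the integrable envelope is $2\|\psi'\|_{L^\infty}|Du| \in L^p(\Omega)$.

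Finally, to conclude that $-{^{\pm}}{I}{^{1-\alpha}}[\psi'(u)\,Du]$ is indeed the weak fractional derivative of $\psi(u)$ in the sense of Definition \ref{RWFD}, I would test against an arbitrary $\varphi \in C_0^{\infty}(\Omega)$: the classical identity for $u_j$ yields
\[
\int_\Omega \psi(u_j)\, {^{\mp}}{D}{^{\alpha}}\tilde\varphi \, dx = -\int_\Omega {^{\pm}}{I}{^{1-\alpha}}\bigl[\psi'(u_j)\,Du_j\bigr] \varphi\, dx,
\]
and both sides pass to their expected limits by the convergences established above, together with the uniform boundedness of $\psi(u_j)$ and the fact that ${^{\mp}}{D}{^{\alpha}}\tilde\varphi \in L^{p'}(\Omega)$. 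This identifies the weak derivative and simultaneously gives ${^{\pm}}{\mathcal{D}}{^{\alpha}}\psi(u) \in L^p(\Omega)$, completing the proof.
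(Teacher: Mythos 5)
Your argument is essentially the paper's: approximate $u$ by smooth functions in $W^{1,p}(\Omega)$, use the classical chain rule together with the adjoint and $L^p$-mapping properties of ${^{\pm}}{I}{^{1-\alpha}}$, show $\psi'(u_j)\,Du_j \to \psi'(u)\,\mathcal{D}u$ in $L^p(\Omega)$ (your dominated-convergence splitting is in fact more explicit than the paper's one-line justification), and identify the weak fractional derivative by testing against $\varphi \in C_0^{\infty}(\Omega)$ in the limit; the only structural difference is that the paper works entirely inside the weak formulation instead of first proving a pointwise classical identity for each $u_j$. One caveat: your claim that the compact support of $\psi$ absorbs the boundary contributions is not quite accurate for the composition $\psi\circ u_j$, which need not vanish at the endpoints of a bounded $\Omega$, but the paper's own integration-by-parts step glosses over exactly the same point, so this does not separate your route from theirs.
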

    
        \begin{proof}
            Let $u \in W^{1,p}(\Omega) \cap {^{\pm}}{W}{^{\alpha,p}}(\Omega)$. By the density/approximation theorem, there exists $\{u_j\}_{j=1}^{\infty} \subset C^{\infty}(\Omega)$ 
            such that $u_j \rightarrow u$ in $W^{1,p}(\Omega)$. Then we have  
            \begin{align*}
                \int_{\Omega} \psi(u) {^{\mp}}{D}{^{\alpha}}\varphi\, dx = \lim_{j\rightarrow \infty} \int_{\Omega} \psi (u_j) {^{\mp}}{D}{^{\alpha}}\varphi \, dx
                &= \lim_{j \rightarrow \infty} (-1) \int_{\Omega} \psi' (u_j) Du_{j} {^{\mp}}{I}{^{1-\alpha}} \varphi\, dx \\ 
                &= \lim_{j \rightarrow \infty} (-1)\int_{\Omega}  {^{\pm}}{I}{^{1- \alpha}} [ \psi'(u_j) Du_j] \varphi \, dx.
            \end{align*}
           
           Next, we claim that ${^{\pm}}{I}{^{1-\alpha}} [ \psi ' (u_j) Du_j] \rightarrow {^{\pm}}{I}{^{1-\alpha}} [ \psi'(u)\mathcal{D}u]$ in $L^{p}(\Omega)$ where $\mathcal{D}$ denotes the integer weak derivative. Our claim follows because 
            \begin{align*}
                \|{^{\pm}}{I}{^{1-\alpha}} [\psi'(u_j) D u_j] - {^{\pm}}{I}{^{1-\alpha}} [ \psi'(u)\mathcal{D}u]\|_{L^{p}(\Omega)} &\leq C \| \psi'(u_j) Du_j - \psi'(u)\mathcal{D}u\|_{L^{p}(\Omega)}
            \end{align*}
            which converges to zero by the assumptions on $\psi$ and on $\{u_j\}_{j=1}^{\infty}$ and the chain rule in $W^{1,p}(\Omega)$.
            The proof is complete. 
        \end{proof}
        
        \begin{remark}
        	The identity ${^{\pm}}{\mathcal{D}}{^{\alpha}} \psi(u)= -{^{\pm}}{I}{^{1- \alpha}} [ \psi'(u) \mathcal{D}u] \in L^{p}(\Omega)$ can be regarded as a special fractional chain rule, which also explains why there is no clean fractional chain rule in general. 
        \end{remark}

        Our first consistency result will be one that allows us to make no assumption on the relationship 
        between $\alpha$ and $p$. However, a restriction on function spaces must be imposed, which will be shown 
        later to be a price to pay without imposing any restriction on the relationship between $\alpha$ and $p$.
        
        \begin{theorem}\label{TraceZeroConsistency}
             Let $\Omega \subset \R$, $0 <\alpha<1$ and $1 \leq p < \infty$. Then $W^{1,p}_{0}(\Omega) \subset {^{\pm}}{W}{^{\alpha,p}}(\Omega)$. Hence, $W^{1,p}_0(\Omega) \subset   {W}^{\alpha,p}(\Omega)$.
        \end{theorem}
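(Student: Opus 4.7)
The plan is to build ${^{\pm}}{\mathcal{D}}{^{\alpha}} u$ for $u \in W^{1,p}_{0}(\Omega)$ by approximating with smooth compactly supported functions and passing to the limit, leveraging a Caputo-type representation on such approximants together with the $L^p$-mapping property of fractional integrals.

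First I take $u \in W^{1,p}_{0}(\Omega)$. Trivially $u \in L^p(\Omega)$, so what remains is to exhibit ${^{\pm}}{\mathcal{D}}{^{\alpha}} u \in L^p(\Omega)$. By definition of $W^{1,p}_{0}(\Omega)$, pick $\{u_j\}_{j=1}^{\infty} \subset C^{\infty}_{0}(\Omega)$ with $u_j \to u$ in $W^{1,p}(\Omega)$. Each $u_j$, being smooth with compact support in $\Omega$, vanishes in a neighborhood of the endpoints; a direct integration by parts in the Riemann--Liouville defining integral then kills the boundary term and yields the Caputo-type identity ${^{\pm}} D^{\alpha} u_j = \pm\, {^{\pm}} I^{1-\alpha}(D u_j)$ pointwise on $\Omega$.

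Next I invoke the $L^p$-mapping property of ${^{\pm}} I^{1-\alpha}$ (cf.\ \cite[Theorem 2.6]{Feng_Sutton}) on bounded $\Omega=(a,b)$: because the kernel $|x-y|^{-\alpha}$ with $\alpha\in(0,1)$ is integrable over a bounded set, a standard convolution estimate gives
\begin{align*}
\|{^{\pm}} D^{\alpha} u_j\|_{L^p(\Omega)} = \|{^{\pm}} I^{1-\alpha}(D u_j)\|_{L^p(\Omega)} \leq C(\Omega,\alpha,p)\,\|u_j\|_{W^{1,p}(\Omega)}.
\end{align*}
Applying the same estimate to the difference $u_m - u_n$ shows that $\{{^{\pm}} D^{\alpha} u_j\}$ is Cauchy in $L^p(\Omega)$, hence converges to some $v \in L^p(\Omega)$. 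By Theorem~\ref{characterization} this limit coincides with the weak fractional derivative ${^{\pm}}{\mathcal{D}}{^{\alpha}} u$, so $u \in {^{\pm}}{W}{^{\alpha,p}}(\Omega)$ together with the quantitative bound $\|{^{\pm}}{\mathcal{D}}{^{\alpha}} u\|_{L^p(\Omega)} \leq C\|u\|_{W^{1,p}(\Omega)}$. The symmetric statement follows immediately, since both one-sided inclusions are obtained simultaneously and $W^{\alpha,p}(\Omega) = {^{-}}{W}{^{\alpha,p}}(\Omega) \cap {^{+}}{W}{^{\alpha,p}}(\Omega)$.

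The main technical hurdle is the derivation of the Caputo-type identity ${^{\pm}} D^{\alpha} u_j = \pm\, {^{\pm}} I^{1-\alpha}(D u_j)$; it depends crucially on the vanishing boundary traces of $u_j$ (automatic from $u_j \in C^{\infty}_{0}(\Omega)$), which eliminates the boundary term produced by integration by parts inside the Riemann--Liouville definition. For the unbounded case $\Omega=\R$ the $L^p \to L^p$ mapping of ${^{\pm}} I^{1-\alpha}$ is not automatic, and one would supplement the argument by exploiting the compact support of each $u_j$ and the polynomial decay of ${^{\pm}} I^{1-\alpha}(D u_j)$ away from that support to obtain the necessary uniform bound.
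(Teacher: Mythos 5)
Your proposal is correct and follows essentially the same route as the paper: approximate $u$ by $\{u_j\}\subset C^{\infty}_{0}(\Omega)$, use the Caputo-type identity ${^{\pm}}{D}{^{\alpha}} u_j = \pm\,{^{\pm}}{I}{^{1-\alpha}}(Du_j)$ (valid because the $u_j$ vanish near the endpoints), invoke the $L^{p}$-boundedness of ${^{\pm}}{I}{^{1-\alpha}}$ on the bounded interval, and pass to the limit — the only cosmetic differences being that you identify the limit with ${^{\pm}}{\mathcal{D}}{^{\alpha}}u$ via Theorem~\ref{characterization} while the paper does so by testing directly against $\varphi\in C^{\infty}_{0}(\Omega)$, and that the direction-dependent sign in the Caputo identity (which the paper itself records inconsistently) plays no role in the estimates. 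Your closing caveat about $\Omega=\R$, where ${^{\pm}}{I}{^{1-\alpha}}$ is not bounded on $L^{p}(\R)$, is a legitimate point the paper glosses over, but it does not affect the bounded-domain statement proved here.
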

    
        \begin{proof}
            Let $u \in W^{1,p}_{0}(\Omega)$. By the density/approximation theorem, there exists $\{u_j\}_{j=1}^{\infty} \subset C^{\infty}_{0}(\Omega)$ such that $u_j \rightarrow u$ in $W^{1,p}(\Omega)$. Then we have 
            \begin{align*}
                \int_{\Omega} u {^{\mp}}{D}{^{\alpha}}\varphi\, dx = \lim_{j \rightarrow \infty} \int_{\Omega} u_j {^{\mp}}{D}{^{\alpha}}\varphi \, dx
                &= \lim_{j\rightarrow \infty} (-1)\int_{\Omega}  Du_j {^{\mp}}{I}{^{1-\alpha}}\varphi \, dx \\
                &= \lim_{j \rightarrow \infty} (-1) \int_{\Omega}  {^{\pm}}{I}{^{1-\alpha}} Du_j \varphi \,dx .
            \end{align*}
            Next, by the boundedness of ${^{\pm}}{I}{^{1-\alpha}}$ we get 
            \begin{align*}
                \|{^{\pm}}{I}{^{1-\alpha}} Du_j - {^{\pm}}{I}{^{1-\alpha}} \mathcal{D}u\|_{L^{p}(\Omega)} \leq C \|Du_j - \mathcal{D}u \|_{L^{p}(\Omega)}, 
            \end{align*}
            which converges to zero by the choice of $\{u_j\}_{j=1}^{\infty}$. Setting $j\to \infty$ in the above 
            equation yields that ${^{\pm}}{\mathcal{D}}{^{\alpha}} u= - {^{\pm}}{I}{^{1-\alpha}} \mathcal{D}u$. 
            Thus, 
            \begin{align*}
                \|{^{\pm}}{\mathcal{D}}{^{\alpha}} u\|_{L^p(\Omega)} = \| {^{\pm}}{I}{^{1-\alpha}} \mathcal{D}u \|_{L^{p}(\Omega)} \leq C \|\mathcal{D}u\|_{L^p(\Omega)}<\infty.
            \end{align*}
            The proof is complete. 
        \end{proof}
        
        \begin{remark}
        	From the above proof we can see that $W^{1,p}(\R) \subset {^{\pm}}{W}{^{\alpha,p}}(\R)$
        	for all $0 <\alpha<1$ and $1 \leq p < \infty$ with the replacing of $\{u_j\}_{j=1}^{\infty} \subset C^{\infty}_{0}(\R)$.
        \end{remark}
        
        To see that the need for zero boundary traces is a necessary condition, we consider  the function 
        $u \equiv 1$. With $\Omega = (a,b)$ is a finite domain, it is easy to check that $u$ is weakly differentiable with 
        the weak derivative coinciding with the Riemann-Liouville derivative, that is,  ${^{-}}{\mathcal{D}}{^{\alpha}} 1 = \Gamma(1-\alpha)^{-1} (x-a)^{-\alpha}$ 
        and a similar formula holds for the right weak derivative. It is easy to show that $\|{^{\pm}}{\mathcal{D}}{^{\alpha}} 1 \|_{L^{p}((a,b))} < \infty$ if and only if $\alpha p <1$. 
        Therefore, the inclusion $W^{1,p}((a,b)) \subset {^{\pm}}{W}{^{\alpha,p}}((a,b))$ may not 
        hold in general. However, the next theorem shows that the inclusion does hold in general
        provided that $\alpha p <1$. 
          
        \begin{theorem}
           Let $\Omega =(a,b)$, $0 <\alpha<1$ and $1\leq p<\infty$. Suppose that $\alpha p <1$. Then $W^{1,p}(\Omega) \subset {^{\pm}}{W}{^{\alpha,p}}(\Omega)$. Hence, $W^{1,p}(\Omega) \subset   {W}{^{\alpha,p}}(\Omega)$ when $\alpha p <1$.
        \end{theorem}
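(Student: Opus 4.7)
The plan is to reduce the general case to the already-proven zero-trace case (Theorem \ref{TraceZeroConsistency}) by splitting off the boundary values through a linear interpolant, and then to handle the non-vanishing ``affine part'' directly via explicit power-function computations.

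First, I would use the standard one-dimensional embedding $W^{1,p}((a,b)) \hookrightarrow C([a,b])$ so that the pointwise values $u(a)$ and $u(b)$ are well-defined. Set $\ell(x) := u(a) + \frac{u(b)-u(a)}{b-a}(x-a)$ and write $u = v + \ell$ with $v := u - \ell$. Then $v \in W^{1,p}((a,b))$ and $v(a)=v(b)=0$, so $v \in W^{1,p}_0((a,b))$. By Theorem \ref{TraceZeroConsistency}, $v \in {^{\pm}}{W}{^{\alpha,p}}(\Omega)$, with a bound on its norm controlled by $\|v\|_{W^{1,p}(\Omega)} \leq C\|u\|_{W^{1,p}(\Omega)}$.

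Next, I would verify that $\ell \in {^{\pm}}{W}{^{\alpha,p}}(\Omega)$. Using the classical Riemann–Liouville formulas for power functions (which coincide with the weak fractional derivative on such smooth functions by the consistency of weak and classical fractional derivatives), one has, for the left derivative,
\begin{equation*}
{^{-}}{\mathcal{D}}{^{\alpha}} 1 = \frac{(x-a)^{-\alpha}}{\Gamma(1-\alpha)}, \qquad {^{-}}{\mathcal{D}}{^{\alpha}} (x-a) = \frac{(x-a)^{1-\alpha}}{\Gamma(2-\alpha)},
\end{equation*}
and analogous identities in terms of $(b-x)$ for the right derivative. The linear term's derivative $(x-a)^{1-\alpha}$ is bounded on $\Omega$ and hence lies in $L^p(\Omega)$ unconditionally. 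The constant term's derivative $(x-a)^{-\alpha}$ lies in $L^p(\Omega)$ precisely when $\alpha p<1$, which is exactly the hypothesis. Thus under $\alpha p < 1$ both pieces of $\ell$ have $L^p$ weak fractional derivatives, so $\ell \in {^{\pm}}{W}{^{\alpha,p}}(\Omega)$.

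Finally, by linearity of the weak fractional derivative, $u = v+\ell \in {^{\pm}}{W}{^{\alpha,p}}(\Omega)$ with
\begin{equation*}
\|u\|_{{^{\pm}}{W}{^{\alpha,p}}(\Omega)} \leq \|v\|_{{^{\pm}}{W}{^{\alpha,p}}(\Omega)} + \|\ell\|_{{^{\pm}}{W}{^{\alpha,p}}(\Omega)} \leq C\|u\|_{W^{1,p}(\Omega)},
\end{equation*}
and the symmetric inclusion $W^{1,p}(\Omega) \subset W^{\alpha,p}(\Omega)$ follows immediately by intersecting the left and right results. The conceptual obstacle — the only subtle point — is exactly the one already flagged in the discussion preceding the theorem: boundary traces of $W^{1,p}$ functions produce singular kernels $(x-a)^{-\alpha}$ and $(b-x)^{-\alpha}$ under fractional differentiation, and the assumption $\alpha p < 1$ is the sharp integrability threshold that tames them. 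Everything else is bookkeeping.
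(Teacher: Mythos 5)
Your proposal is correct, but it takes a genuinely different route from the paper. The paper argues directly: it approximates $u\in W^{1,p}((a,b))$ by smooth functions $u_j$, uses the classical relation between the Riemann--Liouville and Caputo derivatives, ${_{a}}{D}{^{\alpha}_{x}}u_j = \frac{u_j(a)}{\Gamma(1-\alpha)(x-a)^{\alpha}} + {_{a}}{I}{^{1-\alpha}_{x}} Du_j$, passes to the limit in the weak formulation to identify ${^{-}}{\mathcal{D}}{^{\alpha}} u$ explicitly as in \eqref{weak_Caputo}, and then checks $L^p$-membership of that expression under $\alpha p<1$ via the mapping properties of fractional integrals; this explicit representation is then exploited in the subsequent remark to define the weak Caputo derivatives \eqref{weak_Caputo_derivativeL}--\eqref{C2}. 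You instead split $u=v+\ell$ with $\ell$ the affine interpolant of the boundary values, apply Theorem \ref{TraceZeroConsistency} to the zero-trace part $v\in W^{1,p}_0(\Omega)$, and treat $\ell$ by the explicit Riemann--Liouville formulas for $1$ and $(x-a)$. Your argument is more modular, cleanly isolates where $\alpha p<1$ enters (only the constant/boundary-value piece, exactly matching the counterexample $u\equiv 1$ discussed before the theorem), and yields a continuity estimate as a bonus; its mild costs are that it leans on two auxiliary facts you should make explicit --- that a continuous $W^{1,p}$ function on an interval vanishing at both endpoints belongs to $W^{1,p}_0$, and that for the smooth non-compactly-supported function $\ell$ the weak fractional derivative coincides with the classical Riemann--Liouville one (a consistency fact the paper itself invokes for $u\equiv1$) --- and that, unlike the paper's proof, it does not directly produce the representation formula \eqref{weak_Caputo} that the paper uses afterwards.
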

        
        \begin{proof}
            We only give a proof for $W^{1,p}(\Omega) \subset {^{-}}{W}{^{\alpha,p}}(\Omega)$ because 
             the inclusion $W^{1,p}(\Omega)$ $\subset {^{+}}{W}{^{\alpha,p}}(\Omega)$ can be proved similarly.
            
            Let $u \in W^{1,p}((a,b))$. By the density/approximation theorem, there exists a sequence $\{u_j\}_{j=1}^{\infty} \subset C^{\infty}((a,b))\cap C([a,b])$ so that 
            $u_j \rightarrow u$ in $W^{1,p}((a,b))\cap C([a,b])$. 
            Then for any $\varphi \in C^{\infty}_{0}((a,b))$, using the integration by parts formula and the 
            relationship between the Riemann-Liouville and Caputo derivatives, we get 
            \begin{align*}
              \int_{a}^{b} u_j(x) {_{x}}{D}{^{\alpha}_{b}} \varphi(x)\,dx 
                &=\int_{a}^{b} {_{a}}{D}{^{\alpha}_{x}}u_j(x) \varphi (x)\,dx \\
                &= \int_{a}^{b} \left(\dfrac{u_{j}(a)  }{\Gamma(1- \alpha) (x-a)^{\alpha}} + {_{a}}{I}{^{1-\alpha}_{x}} Du_j(x)\right) \varphi(x)\,dx.
            \end{align*}
            Taking the limit $j\to \infty$ on both sides yields 
            \begin{align*}
                \int_{a}^{b} u(x) {_{x}}{D}{^{\alpha}_{b}} \varphi(x)\, dx 
                = \int_{a}^{b} \biggl( \dfrac{u(a) }{\Gamma(1-\alpha) (x-a)^{\alpha}} + {_{a}}{I}{^{1-\alpha}_{x}} \mathcal{D}u(x)\biggr) \varphi(x)\,dx.
            \end{align*}
            Hence, ${^{-}}{\mathcal{D}}{^{\alpha}} u$ almost everywhere in $(a,b)$ and is given by
            \begin{equation}\label{weak_Caputo}
            {^{-}}{\mathcal{D}}{^{\alpha}} u = \dfrac{u(a) }{\Gamma(1-\alpha) (x-a)^{\alpha}} 
            + {_{a}}{I}{^{1-\alpha}_{x}} \mathcal{D}u(x).
            \end{equation}
            It remains to verify that ${^{-}}{\mathcal{D}}{^{\alpha}} u \in L^{p}((a,b))$, which 
            can be easily done for $\alpha p<1$ using the formula above for the weak derivative and 
            the mapping properties of the fractional integral operators. The proof is complete.
        \end{proof}
    
    \begin{remark}
    \eqref{weak_Caputo} suggests the following definitions of the weak fractional Caputo derivatives for any $u\in W^{1,1}(\Omega)$:
    \begin{align}\label{weak_Caputo_derivativeL}
    {^{-}_{C}}{\mathcal{D}}{^{\alpha}} u(x) &: = {_{a}}{I}{^{1-\alpha}_{x}} \mathcal{D}u(x) \qquad\mbox{a.e. in }  \Omega, \\
    {^{+}_{C}}{\mathcal{D}}{^{\alpha}} u(x) &: =  {_{x}}{I}{^{1-\alpha}_{b}} \mathcal{D}u(x) \qquad\mbox{a.e. in }  \Omega, \label{weak_Caputo_derivativeR}
    \end{align} 
    and then we have almost everywhere in $\Omega$
    \begin{align}\label{C1}
     {^{-}_{C}}{\mathcal{D}}{^{\alpha}} u (x)&: =  {^{-}}{\mathcal{D}}{^{\alpha}} u(x)  - \dfrac{u(a)  }{\Gamma(1-\alpha) (x-a)^{\alpha}},\\
      {^{+}_{C}}{\mathcal{D}}{^{\alpha}} u(x) &: = {^{+}}{\mathcal{D}}{^{\alpha}} u(x)   - \dfrac{u(b)  }{\Gamma(1-\alpha)(b-x)^{\alpha} }.  \label{C2}
    \end{align} 
    \end{remark}
    
    We conclude this section with an integration by parts formula for functions in one-sided Sobolev spaces. The need to wait until now for such a formula will be evident in the assumptions.
    
    \begin{proposition}
        Let $\Omega \subset \R$, $\alpha >0$, $1 \leq p < \infty$. Suppose that $u \in {^{\pm}}{W}{^{\alpha,p}}(\Omega)$, $v \in W^{1,q}_{0}(\Omega)$, and $w \in W^{1,q}(\Omega)$. Then there holds 
        \begin{align}\label{Sobolev0IBP}
            \int_{\Omega} v{^{\pm}}{\mathcal{D}}{^{\alpha}} u\, dx 
             = (-1)^{[\alpha]}\int_{\Omega} u {^{\mp}}{\mathcal{D}}{^{\alpha}}v \, dx. 
        \end{align}
        Moreover,  if $\alpha q < 1$, there holds
        \begin{align}\label{SobolevIBP}
            \int_{\Omega} w{^{\pm}}{\mathcal{D}}{^{\alpha}} u \,dx = (-1)^{[\alpha]} \int_{\Omega} u {^{\mp}}{\mathcal{D}}{^{\alpha}} w\, dx.
        \end{align}
    \end{proposition}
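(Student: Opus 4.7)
The plan is to reduce both identities to the case $0<\alpha<1$ (so $[\alpha]=0$) by invoking the semigroup property of weak fractional derivatives together with the classical integer-order integration by parts; I sketch the argument only for this range. For \eqref{Sobolev0IBP}, a direct computation shows that when $v=\varphi\in C^\infty_0(\Omega)$, the classical Riemann--Liouville derivative of the zero extension satisfies ${^{\mp}}D^{\alpha}\tilde\varphi|_\Omega={^{\mp}}{\mathcal{D}}^{\alpha}\varphi$, since $\tilde\varphi$ vanishes outside $\Omega$. Hence the defining identity of ${^{\pm}}{\mathcal{D}}^{\alpha}u$ applied with $\varphi$ already delivers \eqref{Sobolev0IBP} for such $\varphi$. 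Given a general $v\in W^{1,q}_0(\Omega)$, I choose $\{v_j\}\subset C^\infty_0(\Omega)$ with $v_j\to v$ in $W^{1,q}(\Omega)$; the proof of Theorem~\ref{TraceZeroConsistency} shows that $v\mapsto{^{\mp}}{\mathcal{D}}^{\alpha}v=-{^{\mp}}I^{1-\alpha}\mathcal{D}v$ is continuous from $W^{1,q}_0(\Omega)$ into $L^q(\Omega)$ via the $L^q$-boundedness of ${^{\mp}}I^{1-\alpha}$. Thus ${^{\mp}}{\mathcal{D}}^{\alpha}v_j\to{^{\mp}}{\mathcal{D}}^{\alpha}v$ in $L^q(\Omega)$, and H\"older's inequality applied to both sides of the $v_j$-version of \eqref{Sobolev0IBP} lets me pass to the limit.

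For \eqref{SobolevIBP}, $w$ need not vanish at $\partial\Omega$, so I must approximate both arguments. By Theorem~\ref{H=W} I pick $\{u_j\}\subset C^\infty(\Omega)$ with $u_j\to u$ in ${^{\pm}}W^{\alpha,p}(\Omega)$; by the standard density of $C^\infty(\overline{\Omega})$ in $W^{1,q}(\Omega)$ on a bounded interval I pick $\{w_j\}\subset C^\infty(\overline{\Omega})$ with $w_j\to w$ in $W^{1,q}(\Omega)$ and in $C(\overline{\Omega})$. The hypothesis $\alpha q<1$ makes the singular weight $(x-a)^{-\alpha}$ lie in $L^q(\Omega)$; combined with the representation \eqref{C1}--\eqref{C2} and the $L^q$-boundedness of ${^{\mp}}I^{1-\alpha}$, this yields $w_j\to w$ in ${^{\mp}}W^{\alpha,q}(\Omega)$ as well.

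The crux is then the classical identity $\int_\Omega w_j\,{^{\pm}}D^{\alpha}u_j\,dx=\int_\Omega u_j\,{^{\mp}}D^{\alpha}w_j\,dx$ for the smooth pair. I plan to write ${^{-}}D^{\alpha}u_j=\frac{d}{dx}({^{-}}I^{1-\alpha}u_j)$, apply integer-order integration by parts (the $a$-endpoint contribution vanishes since ${^{-}}I^{1-\alpha}u_j(a)=0$), invoke the adjoint of ${^{-}}I^{1-\alpha}$, and then use the Caputo/Riemann--Liouville decomposition \eqref{C2} for $w_j$. The boundary remainder $w_j(b)\,{^{-}}I^{1-\alpha}u_j(b)$ produced by the integer integration by parts is exactly cancelled by an identical term of opposite sign arising when ${^{+}_{C}}{\mathcal{D}}^{\alpha}w_j$ is converted back to ${^{+}}D^{\alpha}w_j$. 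The main obstacle is justifying that the pointwise boundary value ${^{-}}I^{1-\alpha}u_j(b)$ is well defined and continuous in $u_j\in L^p(\Omega)$; this is precisely the role of $\alpha q<1$, since then $(b-\cdot)^{-\alpha}\in L^{p/(p-1)}(\Omega)$ and a dominated-convergence argument gives ${^{-}}I^{1-\alpha}u_j\in C(\overline{\Omega})$. A H\"older-based passage to the limit $j\to\infty$ in the smooth identity then delivers \eqref{SobolevIBP}.
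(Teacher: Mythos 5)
Your proposal is correct, and it takes a genuinely different (and in places more detailed) route than the paper. For \eqref{Sobolev0IBP} the paper approximates \emph{both} functions---$u$ by $C^{\infty}(\Omega)$ functions via Theorem \ref{H=W} and $v$ by $C^{\infty}_{0}(\Omega)$ functions---applies the classical fractional integration by parts to the smooth pair, and passes to the limit, using Theorem \ref{TraceZeroConsistency} to place $v$ in ${^{\mp}}{W}{^{\alpha,q}}(\Omega)$; you instead observe that for $v=\varphi\in C^{\infty}_{0}(\Omega)$ the identity is literally the defining relation of ${^{\pm}}{\mathcal{D}}{^{\alpha}}u$ (since ${^{\mp}}{D}{^{\alpha}}\tilde{\varphi}$ coincides on $\Omega$ with the $\Omega$-based derivative of $\varphi$), so only one approximation, in $v$, is needed, with the limit controlled by the bound $\|{^{\mp}}{\mathcal{D}}{^{\alpha}}v\|_{L^{q}}\leq C\|\mathcal{D}v\|_{L^{q}}$ extracted from the proof of Theorem \ref{TraceZeroConsistency}; this is a more economical argument. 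For \eqref{SobolevIBP} the paper only says the proof is ``similar,'' whereas you supply the missing ingredient: the classical identity $\int_{\Omega}w_j\,{^{\pm}}{D}{^{\alpha}}u_j\,dx=\int_{\Omega}u_j\,{^{\mp}}{D}{^{\alpha}}w_j\,dx$ for a smooth pair \emph{without} boundary conditions, obtained from integer-order integration by parts plus the Riemann--Liouville/Caputo decomposition \eqref{C1}--\eqref{C2}, with the boundary term $w_j(b)\,{^{-}}{I}{^{1-\alpha}}u_j(b)$ cancelling exactly as you describe; and your accounting of the role of $\alpha q<1$ (with $q$ the conjugate of $p$, as the statement implicitly assumes)---it makes $(b-\cdot)^{-\alpha}\in L^{q}(\Omega)$, hence the endpoint values of ${^{-}}{I}{^{1-\alpha}}u_j$ well defined and stable, and gives $w_j\to w$ in ${^{\mp}}{W}{^{\alpha,q}}(\Omega)$---is exactly why that hypothesis appears. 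Two small cautions: the $u_j$ furnished by Theorem \ref{H=W} are smooth only on the open interval, so the endpoint evaluations must indeed be justified by the H\"older estimate you indicate (not by continuity of $u_j$ up to $\partial\Omega$), and the sign in ${^{\mp}}{\mathcal{D}}{^{\alpha}}v=-{^{\mp}}{I}{^{1-\alpha}}\mathcal{D}v$ versus \eqref{weak_Caputo} and \eqref{C2} should be tracked consistently (that discrepancy is already present in the paper and does not affect the identity to be proved).
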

    
    \begin{proof}
        We only give a proof for \eqref{Sobolev0IBP} when $0 < \alpha <1$. The other cases and (\ref{SobolevIBP}) can be showed similarly. 
        Choose $\{u_j\}_{j=1}^{\infty} \subset {}{C}^{\infty}(\Omega)$
        	and $\{v_k\}_{k=1}^{\infty} \subset C^{\infty}_0(\Omega)$ 
        such that $u_j\to u$ in ${^{\pm}}{W}{^{\alpha,p}}(\Omega)$ and  
          $v_k \rightarrow v$ in $W^{1,q}(\Omega)$. By Theorem \ref{TraceZeroConsistency} we have 
        $v \in {^{\mp}}{W}{^{\alpha,q}}(\Omega)$. It follows that 
        \begin{align*}
            \int_{\Omega} u {^{\mp}}{\mathcal{D}}{^{\alpha}} v\, dx = \lim_{j,k \rightarrow \infty} \int_{\Omega} u_j {^{\mp}}{D}{^{\alpha}} v_k\, dx 
            = \lim_{j,k \rightarrow \infty} \int_{\Omega} v_k {^{\pm}}{D}{^{\alpha}} u_j \, dx
            = \int_{\Omega} v {^{\pm}}{\mathcal{D}}{^{\alpha}} u_j\ dx.
        \end{align*}
        This completes the proof.
    \end{proof}

    \subsubsection{\bf The Case $p=1$ and $\Omega = \R$}\label{sec-4.6.2}
       First, by doing a change of variables we get for any $\varphi \in C^{\infty}_{0}(\R)$ 
        \begin{align*}
                {^{-}}{D}{^{\alpha}} \varphi(x) &= \dfrac{1}{\Gamma(1- \alpha)} \dfrac{d}{dx} \int_{-\infty}^{x} \dfrac{\varphi(y)}{(x-y)^{\alpha}} \,dy 
                = \dfrac{1}{\Gamma(1 - \alpha)} \dfrac{d}{dx} \int_{0}^{\infty} t^{-\alpha} \varphi(x-t)\,dt\\ 
                &= \dfrac{1}{\Gamma(1- \alpha)} \int_{0}^{\infty} t^{-\alpha} \varphi'(x-t)\,dt  
                = \dfrac{\alpha}{\Gamma(1- \alpha)} \int_{-\infty}^{x} \dfrac{\varphi(x) - \varphi(t)}{(x-t)^{1+\alpha}}\,dt.
        \end{align*}
        Similarly, 
        \begin{align*}
            {^{+}}{D}{^{\alpha}}\varphi(x) &= \dfrac{\alpha}{\Gamma(1 - \alpha)} 
            \int_{x}^{\infty} \dfrac{ \varphi(t) - \varphi(x) }{(t-x)^{1 + \alpha}}\,dt.
        \end{align*}
        These equivalent formulas will be used in the proof of the next theorem. 

        \begin{theorem}
            Let $0<\alpha<1$. Then $\widetilde{W}^{\alpha,1}(\R) \subseteq {^{\pm}}{W}{^{\alpha,1}}(\R)$.
        \end{theorem}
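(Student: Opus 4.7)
The strategy is to define a candidate for ${^{-}}{\mathcal{D}}{^{\alpha}} u$ via the Marchaud-type (finite-difference) expression
\begin{align*}
v(x) := \dfrac{\alpha}{\Gamma(1-\alpha)} \int_{-\infty}^{x} \dfrac{u(x)-u(t)}{(x-t)^{1+\alpha}}\, dt,
\end{align*}
and to verify that $v = {^{-}}{\mathcal{D}}{^{\alpha}} u$ by invoking the characterization Theorem \ref{characterization}. An analogous expression $w(x) := \frac{\alpha}{\Gamma(1-\alpha)} \int_x^\infty \frac{u(x)-u(t)}{(t-x)^{1+\alpha}}\, dt$ handles the right-sided case. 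The key observation is that for $p=1$ and $\sigma=\alpha$, the Gagliardo seminorm is
\begin{align*}
[u]_{\widetilde{W}^{\alpha,1}(\R)} = \int_\R \int_\R \dfrac{|u(x)-u(t)|}{|x-t|^{1+\alpha}}\, dx\, dt,
\end{align*}
which provides precisely the double-integral control needed on $v$ and the associated error terms.

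First, using Fubini together with the identity $|x-t| = x-t$ on $\{t<x\}$ and the symmetry of the Gagliardo integrand, one obtains
\begin{align*}
\|v\|_{L^{1}(\R)} \leq \dfrac{\alpha}{\Gamma(1-\alpha)} \int_{\R} \int_{-\infty}^{x} \dfrac{|u(x)-u(t)|}{(x-t)^{1+\alpha}}\, dt\, dx = \dfrac{\alpha}{2\Gamma(1-\alpha)}\, [u]_{\widetilde{W}^{\alpha,1}(\R)} < \infty,
\end{align*}
so $v \in L^{1}(\R)$. Next, set $u_j := \eta_{1/j} * u$ with $\eta_\varepsilon$ a standard mollifier, so that $u_j \in C^{\infty}(\R) \cap L^{1}(\R)$ and $u_j \to u$ in $L^{1}(\R)$; a routine Jensen/Minkowski bound applied inside the Gagliardo double integral shows $[u_j - u]_{\widetilde{W}^{\alpha,1}(\R)} \to 0$. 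Since $u_j$ is smooth and belongs to $L^{1}$, an integration by parts (with a truncation at $-\infty$) identifies the classical Riemann--Liouville derivative with its Marchaud form:
\begin{align*}
{^{-}}{D}{^{\alpha}} u_j(x) = \dfrac{\alpha}{\Gamma(1-\alpha)} \int_{-\infty}^{x} \dfrac{u_j(x)-u_j(t)}{(x-t)^{1+\alpha}}\, dt \qquad \text{a.e. } x\in \R.
\end{align*}

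Applying the above $L^1$-estimate to the difference $u_j - u$ then yields
\begin{align*}
\|{^{-}}{D}{^{\alpha}} u_j - v\|_{L^{1}(\R)} \leq \dfrac{\alpha}{2\Gamma(1-\alpha)}\, [u_j - u]_{\widetilde{W}^{\alpha,1}(\R)} \longrightarrow 0.
\end{align*}
Combined with $u_j \to u$ in $L^{1}(\R)$, Theorem \ref{characterization} identifies $v$ as the left weak fractional derivative of $u$, i.e.\ ${^{-}}{\mathcal{D}}{^{\alpha}} u = v \in L^{1}(\R)$, which gives $u \in {^{-}}{W}{^{\alpha,1}}(\R)$; the $+$ case is identical with the $w$ above. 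The main technical obstacle is the pointwise Marchaud identity for $u_j \in C^{\infty}(\R) \cap L^{1}(\R)$: since $u_j$ is neither compactly supported nor Schwartz, the naive integration by parts is not clean and one must justify the vanishing of the boundary contribution at $-\infty$ using only the $L^{1}$-averaged decay of $u_j$ rather than pointwise decay. Everything else reduces to Fubini and standard mollifier bookkeeping.
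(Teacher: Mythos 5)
Your proposal is correct in outline and reaches the result by a genuinely different mechanism than the paper, although both rest on the same core estimate: the Marchaud-type representation of ${^{\pm}}{D}{^{\alpha}}$ whose $L^{1}$-norm is controlled by the Gagliardo seminorm $[\,\cdot\,]_{\widetilde{W}^{\alpha,1}(\R)}$. The paper quotes the density of $C^{\infty}_{0}(\R)$ in $\widetilde{W}^{\alpha,1}(\R)$, proves the Marchaud identity only for compactly supported smooth $\varphi$ (where the change of variables and differentiation under the integral are clean), shows the approximating sequence is Cauchy in ${^{\pm}}{W}{^{\alpha,1}}(\R)$, and identifies the limit with $u$ via completeness. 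You instead build the candidate derivative $v$ of $u$ directly from the Marchaud formula, approximate by pure mollification $u_j=\eta_{1/j}*u$, and identify $v={^{\pm}}{\mathcal{D}}{^{\alpha}}u$ through the characterization theorem (Theorem \ref{characterization}). What your route buys is independence from the $C^{\infty}_{0}$-density of the Gagliardo space (you only need convergence of mollifications in the seminorm, which follows from Minkowski's integral inequality plus continuity of translations in $L^{1}(\R^2)$ -- note that Jensen/Minkowski alone gives only $[u_j]\le[u]$, not convergence); what it costs is exactly the obstacle you flag, namely the pointwise Marchaud $=$ Riemann--Liouville identity for smooth functions that are not compactly supported.

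That flagged step is real but closable, and you should close it rather than leave it as a remark: since $u\in L^{1}(\R)$, the mollification satisfies $u_j=\eta_{1/j}*u\in C^{\infty}(\R)\cap W^{1,1}(\R)$ with $u_j'=\eta_{1/j}'*u\in L^{1}(\R)$, and $u_j(x)\to 0$ as $x\to-\infty$ (dominated convergence in the convolution), so pointwise decay \emph{is} available; one then writes ${^{-}}{D}{^{\alpha}}u_j(x)=\Gamma(1-\alpha)^{-1}\int_{0}^{\infty}t^{-\alpha}u_j'(x-t)\,dt$ and converts to the Marchaud form via $t^{-\alpha}=\alpha\int_{t}^{\infty}s^{-\alpha-1}ds$ and Fubini, valid for a.e.\ $x$. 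Alternatively, you could simply cut off as well as mollify, so that your approximants lie in $C^{\infty}_{0}(\R)$; then the identity is the one the paper establishes immediately before the theorem, and the rest of your argument goes through verbatim. With that step supplied, your use of Theorem \ref{characterization} (with $L^{1}(\R)$ convergence of the derivatives, stronger than the required $L^{1}_{loc}$ convergence) is a perfectly valid substitute for the paper's Cauchy-plus-completeness argument.
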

        
        \begin{proof}
            Let $u \in \widetilde{W}^{\alpha,1}(\R)$. Recall that $C^{\infty}_{0}(\R)$ is dense in $\widetilde{W}^{\alpha,1}(\R)$. Then 
            there exists a sequence $\{u_j\}_{j=1}^{\infty} \subset C^{\infty}_{0}(\R)$ such that $u_j \rightarrow u$ 
            in $\widetilde{W}^{\alpha,1}(\R)$ as $j \rightarrow \infty$. We only give a proof of the inclusion 
            for the left fractional Sobolev space because the proof for the other case follows similarly.  
            
            Using the above equivalent formula for left derivatives, we get 
            \begin{align*}
                \left\| {^{-}}{\mathcal{D}}{^{\alpha}}u_j \right\|_{L^{1}(\R)} &= \left\|{^{-}}{D}{^{\alpha}} u_j \right\|_{L^{1}(\R)} 
                = C_\alpha \int_{\R} \left|\int_{-\infty}^{x} \dfrac{u_j (x) - u_j(y)}{(x-y)^{1+\alpha}}\,dy \right|dx \\ 
                &\leq C_\alpha \int_{\R} \int_{-\infty}^{x} \dfrac{|u_j(x) - u_j(y)|}{|x-y|^{1+\alpha}}\,dydx \\
                &\leq C_\alpha \int_{\R} \int_{\R} \dfrac{|u_j(x) - u_j(y)|}{|x-y|^{1+\alpha}}\,dydx  \\
                &= C_\alpha \left[u_j \right]_{\widetilde{W}^{\alpha,1}(\R)}.
            \end{align*}
            By the property of $\{u_j\}_{j=1}^{\infty}$, we conclude that $\left[ u_j \right]_{\widetilde{W}^{\alpha,1}(\R)} \rightarrow [u]_{\widetilde{W}^{\alpha,1}(\R)} < \infty$. 

            Let $\eps > 0$, for sufficiently large $m,n \in \N$, we have
            \begin{align*}
                \left\|{^{\pm}}{\mathcal{D}}{^{\alpha}} u_m - {^{\pm}}{\mathcal{D}}{^{\alpha}} u_n \right\|_{L^{1}(\R)} = \left\|{^{\pm}}{\mathcal{D}}{^{\alpha}} \left[u_m - u_n \right]\right\|_{L^{1}(\R)}  
                \leq \left[u_m - u_n \right]_{\widetilde{W}^{\alpha,1}(\R)}  
                <\eps.
            \end{align*}
            Hence, $\{u_j\}_{j=1}^{\infty}$ is a Cauchy sequence in ${^{\pm}}{W}{^{\alpha,1}}(\R)$. Thus, 
            there exists $v \in {^{\pm}}{W}{^{\alpha,1}}(\R)$ such that $u_j \rightarrow v$ 
            in ${^{\pm}}{W}{^{\alpha,1}}(\R)$. By the property of $\{u_j\}_{j=1}^{\infty}$, there holds 
            $u_j \rightarrow u$ in $L^{1}(\R)$. On the other hand, the convergence in ${^{\pm}}{W}{^{\alpha,1}}(\R)$ implies that $u_j \rightarrow v$ in $L^{1}(\R)$.
            Thus, $u=v$ almost everywhere in $\R$ and yielding that $u \in {^{\pm}}{W}{^{\alpha,1}}(\R)$. 
        \end{proof}

    \subsubsection{\bf The Case $p=2$ and $\Omega = \R$}\label{sec-4.6.3}
        This section extends the above equivalence result of two fractional Sobolev spaces to the case when $p=2$. 
        As we will see, $p=2$ is special in the sense that it is the only case in which the equivalence of the space $\widehat{H}^{\alpha}(\R)$ defined by the Fourier transform (and its inverse) and the space $\widetilde{H}^{\alpha}(\R)$ holds. Recall that $\widehat{W}^{\alpha,p}(\R) \neq \widetilde{W}^{\alpha,p}(\R)$ for $p\neq2$ (cf. \cite{Adams,Brezis}).

        \begin{lemma}\label{equivalence_seminorms}
           Let $0 < \alpha <1$ and $\varphi \in C^{\infty}_{0}(\R)$. Then $\|{^{\mathcal{F}}}{D}{^{\alpha}} \varphi \|_{L^{2}(\R)} \cong [ \varphi ]_{\widetilde{H}^{\alpha}(\R)}$.
        \end{lemma}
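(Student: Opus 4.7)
The plan is to connect both quantities to the Fourier transform and compare them through Plancherel's theorem. Since ${^{\mathcal{F}}}{D}^\alpha$ is the Fourier-defined fractional derivative, its symbol acts on $\hat\varphi$ as multiplication by something of modulus $|\xi|^\alpha$; explicitly $|\widehat{{^{\mathcal{F}}}{D}^\alpha \varphi}(\xi)| = |\xi|^\alpha |\hat\varphi(\xi)|$ for $\varphi \in C^\infty_0(\R)$. Thus Plancherel immediately gives
\begin{equation*}
\|{^{\mathcal{F}}}{D}^\alpha \varphi\|_{L^2(\R)}^2 = \int_\R |\xi|^{2\alpha} |\hat\varphi(\xi)|^2 \, d\xi.
\end{equation*}
So the task reduces to establishing the equivalence
\begin{equation*}
[\varphi]_{\widetilde{H}^\alpha(\R)}^2 = \int_\R \int_\R \frac{|\varphi(x) - \varphi(y)|^2}{|x-y|^{1+2\alpha}} \, dx\, dy \cong \int_\R |\xi|^{2\alpha}|\hat\varphi(\xi)|^2 \, d\xi.
\end{equation*}

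To handle the Gagliardo seminorm, I would perform the change of variables $h = y - x$ to rewrite it as a double integral in $(x,h)$, and then recognize the inner integral in $x$ as $\|\tau_h \varphi - \varphi\|_{L^2(\R)}^2$, where $\tau_h \varphi(x) := \varphi(x+h)$. Since $\widehat{\tau_h \varphi - \varphi}(\xi) = (e^{ih\xi} - 1)\hat\varphi(\xi)$, another application of Plancherel gives
\begin{equation*}
\|\tau_h \varphi - \varphi\|_{L^2(\R)}^2 = \int_\R |e^{ih\xi} - 1|^2 |\hat\varphi(\xi)|^2 \, d\xi.
\end{equation*}
Invoking Fubini (which is justified by $\varphi \in C^\infty_0(\R) \subset \mathcal{S}(\R)$, so $\hat\varphi$ is Schwartz and the double integral converges absolutely for $0 < \alpha < 1$), I can swap the order of integration to obtain
\begin{equation*}
[\varphi]_{\widetilde{H}^\alpha(\R)}^2 = \int_\R |\hat\varphi(\xi)|^2 \left( \int_\R \frac{|e^{ih\xi} - 1|^2}{|h|^{1+2\alpha}} \, dh \right) d\xi.
\end{equation*}

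The final step is the scaling computation. For fixed $\xi \neq 0$, substitute $t = h\xi$ (and use the fact that $|e^{-it}-1| = |e^{it}-1|$) to get
\begin{equation*}
\int_\R \frac{|e^{ih\xi}-1|^2}{|h|^{1+2\alpha}} \, dh = |\xi|^{2\alpha} \int_\R \frac{|e^{it} - 1|^2}{|t|^{1+2\alpha}} \, dt =: K_\alpha \, |\xi|^{2\alpha}.
\end{equation*}
The constant $K_\alpha$ is finite and strictly positive: for $0 < \alpha < 1$, integrability near $t=0$ is ensured by $|e^{it}-1|^2 = O(t^2)$, giving an exponent $1 - 2\alpha > -1$, and integrability at $\infty$ follows from $|e^{it}-1|^2 \le 4$ together with $1 + 2\alpha > 1$. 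Combining everything yields $[\varphi]_{\widetilde{H}^\alpha(\R)}^2 = K_\alpha \|{^{\mathcal{F}}}{D}^\alpha \varphi\|_{L^2(\R)}^2$, giving the two-sided equivalence with explicit constants.

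The only delicate point is verifying the Fubini hypothesis, but this is transparent because $\varphi$ is Schwartz, so both iterated integrals are finite. The kernel computation $K_\alpha < \infty$ is exactly where the hypothesis $0 < \alpha < 1$ enters (upper limit needs $\alpha > 0$; lower limit needs $\alpha < 1$). Everything else is a clean Plancherel-plus-translation argument, so I do not anticipate any serious obstacle beyond bookkeeping the constants.
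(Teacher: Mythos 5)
Your proposal is correct and its essential step coincides with the paper's proof: apply Plancherel to the Fourier-defined derivative, whose symbol has modulus $|\xi|^{\alpha}$, to identify $\|{^{\mathcal{F}}}{D}{^{\alpha}}\varphi\|_{L^{2}(\R)}^{2}$ with $\int_{\R}|\xi|^{2\alpha}|\hat\varphi(\xi)|^{2}\,d\xi$. The only difference is that the paper then simply invokes the quoted equivalence \eqref{SeminormRelation} between the Gagliardo seminorm and this Fourier expression, whereas you re-derive that equivalence from scratch via the translation--Plancherel--scaling argument with the explicit constant $K_{\alpha}$ (and the interchange of integrals is even easier than you indicate, since the integrand is nonnegative and Tonelli applies); this makes your write-up self-contained but otherwise identical in substance, and it is correct.
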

        
        \begin{proof}
            Let $\hat{\varphi}=\mathcal{F}(\varphi)$. It follows from Plancherel theorem and (\ref{SeminormRelation}) that
            \begin{align*}
                \|{^{\mathcal{F}}}{D}{^{\alpha}} \varphi \|_{L^{2}(\R)}^{2} &= \|\mathcal{F}^{-1} [ (i\xi)^{\alpha} \hat{\varphi} ] \|_{L^{2}(\R)}^{2} 
                = \|(i\xi)^{\alpha} \hat{\varphi} \|_{L^{2}(\R)}^{2}  \\ 
                & = \int_{\R} |i\xi|^{2\alpha} |\hat{\varphi}(\xi)|^{2} \,d\xi 
                = \int_{\R} |\xi|^{2\alpha} |\hat{\varphi}(\xi)|^{2}\,d\xi  \\
                &\cong
                [u]_{\widetilde{H}^{\alpha}(\R)}.
            \end{align*}
            Taking the square root of each side, we obtain the desired result. 
        \end{proof}

        \begin{theorem}
            Let $0 < \alpha <1$. Then ${^{\pm}}{{H}}{^{s}} (\R) = \widetilde{H}^{s}(\R)$. 
        \end{theorem}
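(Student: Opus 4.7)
The plan is to leverage Lemma \ref{equivalence_seminorms} together with the density of $C^\infty_0(\R)$ in both spaces. On smooth compactly supported functions the weak derivative $^{\pm}\mathcal{D}^{\alpha}$ agrees with the Fourier fractional derivative $^{\mathcal{F}}D^{\alpha}$ (since both reduce to a classical fractional derivative on $C^\infty_0(\R)$), so the seminorm $\|^{\pm}\mathcal{D}^{\alpha}\varphi\|_{L^2(\R)}$ is equivalent to $[\varphi]_{\widetilde{H}^\alpha(\R)}$ for every $\varphi \in C^\infty_0(\R)$. Since both spaces share the $L^2$-term in their norms, this gives the equivalence
\[
\|\varphi\|_{^{\pm}H^{\alpha}(\R)} \cong \|\varphi\|_{\widetilde{H}^{\alpha}(\R)}
\qquad \forall \varphi \in C^\infty_0(\R).
\]

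Next I would upgrade this smooth-function equivalence to the full spaces by a Cauchy-sequence argument in two directions. For the inclusion $\widetilde{H}^\alpha(\R)\subseteq {^{\pm}}H^\alpha(\R)$, pick $u \in \widetilde{H}^\alpha(\R)$ and choose $\{u_j\}\subset C^\infty_0(\R)$ with $u_j \to u$ in $\widetilde{H}^\alpha(\R)$ (this density is standard, see \cite{Adams,Nezza}). The norm equivalence on $C^\infty_0(\R)$ shows $\{u_j\}$ is Cauchy in $^{\pm}H^\alpha(\R)$, so it converges to some $v$ there. The $L^2$-convergence $u_j\to u$ and $u_j\to v$ force $v=u$ a.e., hence $u \in {^{\pm}}H^\alpha(\R)$ and $\|u\|_{^{\pm}H^\alpha(\R)}\le C\|u\|_{\widetilde{H}^\alpha(\R)}$.

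For the reverse inclusion, I invoke the density/approximation theorem proved earlier in this paper, which gives $\{u_j\}\subset C^\infty_0(\R)$ with $u_j\to u$ in $^{\pm}H^\alpha(\R)$. Applying the equivalence on $C^\infty_0(\R)$ again, $\{u_j\}$ is Cauchy in $\widetilde{H}^\alpha(\R)$ and therefore converges to some $w$ in that space; uniqueness of $L^2$ limits forces $w=u$, so $u\in\widetilde{H}^\alpha(\R)$ together with the reverse norm bound.

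The most delicate point is really the \textbf{first} step: making sure that $^{\pm}\mathcal{D}^\alpha\varphi = {^\mathcal{F}}D^\alpha\varphi$ on $C^\infty_0(\R)$ so that Lemma \ref{equivalence_seminorms} applies verbatim to the weak derivative. This follows because on $C^\infty_0(\R)$ the weak fractional derivative coincides with any classical fractional derivative that is consistent on this space (by definition of the weak derivative and the consistency statement in Section \ref{sec-2.2}), and $^{\mathcal{F}}D^\alpha$ is listed there among the classical fractional derivatives. Once this identification is in hand, the rest of the argument is the soft-analysis Cauchy completion argument sketched above, and no further estimates are needed.
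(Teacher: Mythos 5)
Your proposal is correct and follows essentially the same route as the paper: identify $^{\pm}\mathcal{D}^{\alpha}\varphi$ with $^{\mathcal{F}}D^{\alpha}\varphi$ on $C^\infty_0(\R)$ so that Lemma \ref{equivalence_seminorms} gives the two-sided norm equivalence there, then transfer to the full spaces by density of $C^\infty_0(\R)$ in each space, a Cauchy-sequence argument, and identification of the limit via $L^2$ convergence. This matches the paper's two-step proof, so no further comment is needed.
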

        
        \begin{proof}
            Step 1: Suppose $u\in{^{\pm}}{{H}}{^{\alpha}}(\R)$. Since $C^{\infty}_{0}(\R)$ is dense in ${^{\pm}}{{H}}{^{\alpha}}(\R)$, then there exists a sequence $\left\{u_j \right\}_{j=1}^{\infty} \subset C^{\infty}_{0}(\R)$ such that $ u_j \rightarrow u$ in ${^{\pm}}{{H}}{^{\alpha}}(\R)$. 
            Then by Lemma \ref{equivalence_seminorms} we get 
            \begin{align*}
                \left\|u _j \right\|_{\widetilde{H}^{\alpha}(\R)}^{2} &= \left\| u _j \right\|_{L^{2}(\R)}^{2} + \left[ u_j \right]_{\widetilde{H}^{\alpha}(\R)}^{2} 
                \leq  \left\|u_j \right\|_{L^{2}(\R)}^{2} + C \| {^{\mathcal{F}}}{D}{^{\alpha}} u_j \|_{L^{2}(\R)}^{2}   \\ 
                &=    \left\|u _ j\right\|_{L^{2}(\R)}^{2} + C\left\|{^{\pm}}{\mathcal{D}}{^{\alpha}} u_j \right\|_{L^{2}(\R)}^{2}  
                \leq  C \left\|u_j \right\|_{{^{\pm}}{{H}}{^{\alpha}}(\R)}^{2}.
            \end{align*}
            Consequently,
            \begin{align*}
                \|u_m - u_n\|_{\widetilde{H}^{\alpha}(\R)} \leq C \|u_m - u_n \|_{{^{\pm}}{H}{^{\alpha}}(\R)}\to 0 \quad\mbox{as } m,n\to \infty.
            \end{align*}
             Thus, $\{u_j\}_{j=1}^{\infty}$ is a Cauchy sequence in $\widetilde{H}^{\alpha}(\R)$. Since $\widetilde{H}^{\alpha}(\R)$ is a Banach space, there exists $v \in \widetilde{H}^{\alpha}(\R)$ so that $u_j \rightarrow v$ in $\widetilde{H}^{\alpha}(\R)$; in particular, $u_j \rightarrow v$ in $L^{2}(\R)$. By assumption, $u_j \rightarrow u$ in $L^{2}(\R)$. Therefore, $v = u$ a.e. in $\R$ and $u \in \widetilde{H}^{\alpha}(\R)$.

            Step 2: Let $u \in \widetilde{H}^{\alpha}(\R)$. By the approximation theorem, there exists
            a sequence $\{u_j\}_{j=1}^{\infty} \subset C^{\infty}_{0}(\R)$ 
            such that $u_j \rightarrow u$ in $\widetilde{H}^{\alpha}(\R)$. Then by Lemma \ref{equivalence_seminorms} we get 
            \begin{align*}
                \|u_j\|_{{^{\pm}}{H}{^{\alpha}}(\R)}^{2} &= \|u_j\|_{L^{2}(\R)}^{2} + \|{^{\pm}}{\mathcal{D}}{^{\alpha}} u_j \|_{L^{2}(\R)}^{2} \\ 
                &= \|u_j\|_{L^{2}(\R)}^{2} + \|{^{\mathcal{F}}}{D}{^{\alpha}}u_j \|_{L^{2}(\R)}^{2} \\ 
                &\leq \|u_j\|_{L^{2}(\R)}^{2} + C [ u_j]_{\widetilde{H}^{\alpha} (\R)}^{2}.
            \end{align*}
            It implies that 
            \begin{align*}
                \|u_m - u_n \|_{{^{\pm}}{H}{^{\alpha}}(\R)} \leq C \| u_m - u_n \|_{\widetilde{H}^{\alpha}(\R)}  \to 0 \quad\mbox{as } m,n\to \infty.
            \end{align*}
            Hence $\{u_j\}_{j=1}^{\infty}$ is a Cauchy sequence in ${^{\pm}}{H}{^{\alpha}}(\R)$. Since ${^{\pm}}{H}{^{\alpha}}(\R)$ is a Banach space, there exists $v \in {^{\pm}}{H}{^{\alpha}}(\R)$ so that $u_j \rightarrow v$ in ${^{\pm}}{H}{^{\alpha}}(\R)$; in particular, $u_j \rightarrow v$ in $L^{2}(\R)$. By assumption $u_j \rightarrow u$ in $L^{2}(\R)$. Therefore, $v=u$ a.e. and $u \in {^{\pm}}{H}{^{\alpha}}(\R)$. 
        \end{proof}

  \begin{remark}
        (a) The above result immediately infers that the equivalences \newline 
         ${^{\pm}}{H}{^{\alpha}}(\R) = \widetilde{H}^{\alpha}(\R) = \widehat{H}^{\alpha}(\R)$.
        
      (b) We note that  ${^{-}}{{H}}{^{s}} (\R) = {^{+}}{{H}}{^{s}} (\R)$, 
          however, this does not means 
        that the left and right weak derivatives of the same function are the same 
        or equivalent but rather two spaces contain the same set of functions. 
    
      (c) We conjecture that ${^{\pm}}{W}{^{\alpha,p}}(\R) \neq \widehat{W}^{\alpha,p}(\R)$, but 
      ${^{\pm}}{W}{^{\alpha,p}}(\R) = {\tW}^{\alpha,p}(\R)$ for $p\neq2$ and $0<\alpha<1$.
      
      (d) It can easily be shown that the equality ${^{\pm}}{W}{^{\alpha,p}}(\Omega) = {\tW}^{\alpha,p}(\Omega)$ cannot hold in general. It was proved that when $\alpha p > 1$, ${^{\pm}}{\mathcal{D}}{^{\alpha}} C \notin {^{\pm}}{W}{^{\alpha,p}}(\Omega)$. However, constant functions always belong to ${\tW}^{\alpha,p}(\Omega)$. In general, ${\tW}^{\alpha,p}(\Omega) \not\subset {^{\pm}}{W}{^{\alpha,p}}(\Omega)$. For the same reason, ${\tW}^{\alpha,p}(\Omega) \not\subset  {W}^{\alpha,p}(\Omega)$ when $\alpha p >1$. This simple example shows that the fractional derivative definition is fundamentally different from the (double) integral term resembling a difference quotient in the seminorm of ${\tW}^{\alpha,p}(\Omega)$. If an equivalence exists on the finite domain, it is our conjecture that for $\alpha p <1$, the spaces $ {W}^{\alpha , p}(\Omega)$ and ${\tW}^{\alpha,p}(\Omega)$ are the two spaces that should be comparable. 
      
  \end{remark}


\section{Conclusion}\label{sec-5}
   In this paper we introduced three families of new fractional Sobolev
   spaces based on the newly developed weak fractional derivative 
   notion in \cite{Feng_Sutton, Feng_Sutton1}, they were defined in the exact same manner 
   as done for the integer order Sobolev spaces. Many important theorems and properties,
    such as density theorem, extension theorems, one-sided trace theorem, various embedding 
    theorems and Sobolev inequalities, integration by parts formulas and dual space 
    characterizations in those Sobolev spaces were established. 
    Moreover, a few relationships, including equivalences and differences, with existing 
   fractional Sobolev spaces were also established.
   
   It is expected (and our hope, too) that these newly developed theories of weak fractional differential calculus and fractional order Sobolev spaces will lay down a solid theoretical foundation for systematically and rigorously developing a fractional calculus of variations theory and a fractional PDE theory as well as their numerical solutions. Moreover, we hope this work will stimulate more research on and applications of fractional calculus and fractional differential equations, including the extensions to higher dimension, in the near future.


\end{document}